\newtheorem{theorem}{\sc Theorem}[section]
\newtheorem{ppty}[theorem]{\sc Property}
\newtheorem{lemma}[theorem]{\sc Lemma}
\newtheorem{assump}[theorem]{\bf Assumption}
\newtheorem{definition}[theorem]{\sc Definition}
\newtheorem{remark}[theorem]{\sc Remark}
\newtheorem{cor}[theorem]{\sc Corollary}
\newcommand{\argmin}{{\rm arg\,min}}
\begin{document}

%-------------------------------------------------------------------------------
%
%-------------------------------------------------------------------------------
\title[The Parameters in a Nested Primal-Dual Algorithm]
{Choice of the Parameters in A Primal-Dual Algorithm 
for Bregman Iterated Variational Regularization}

%-------------------------------------------------------------------------------
%
%-------------------------------------------------------------------------------
\author{Erdem Altuntac$^{1}$
\footnote{ Major part of this work has been done 
within the framework of ARC grant at Universit\'e Libre de 
Bruxelles during author`s PostDoc research period 2017 - 2019.}}

\address{$^{1}$Fraunhofer Institute for High Frequency 
Physics and Radar Techniques (FHR), Wachtberg, Germany }

%\address{\textcolor{red}{\hspace*{1cm}\\Draft from \today}}

\ead{\mailto{erdem.altuntac@fhr.fraunhofer.de}}

%-------------------------------------------------------------------------------
% ABSTRACT
%-------------------------------------------------------------------------------
\begin{abstract}

Focus of this work is solving a 
non-smooth constraint minimization problem by a 
primal-dual splitting algorithm involving proximity operators.
The problem is penalized by the
Bregman divergence associated with the 
non-smooth total variation (TV) functional.

We analyse two aspects: Firstly, the convergence of the 
regularized solution of the minimization 
problem to the minimum norm solution. Second, the 
convergence of the iteratively regularized minimizer  
to the minimum norm solution by a primal-dual algorithm.
For both aspects, we use the assumption of a variational 
source condition (VSC).
This work emphasizes the impact of the
choice of the parameters in stabilization of a
primal-dual algorithm.
Rates of convergence are obtained in terms of some concave, 
positive definite index function. 

The algorithm is applied to a simple two dimensional 
image processing problem. Sufficient error analysis profiles
are provided based on the size of the forward operator 
and the noise level in the measurement.

\bigskip
\textbf{Keywords:}
{iterative regularization, primal dual algorithm, 
Bregman distance, total variation, proximal mapping}
\end{abstract}

\bigskip

%===========================================================
% Intro
%===========================================================

\section{Introduction}

One can only stabilize an algorithm for correctly defined
parameters. Otherwise, no regularization technique 
provides approximate solution for an inverse ill-posed problem.
This work does not only focus on stability analysis of a convex optimization
algorithm, but it also introduces the algorithm as an iterative regularization
procedure. The algorithm we take up is in the generalization
of gradient descent by means of proximal mapping. 
In the study of convergence of a gradient descent algorithm,
it is known that the stability of the algorithm is in fact based on how
the step-length is defined, \textit{e.g.} \textbf{\cite{BeckTeboulle09}}.
By stability, we mean convergence of the iterative solution to the
minimizer of some appropriately defined objective functional.
However, from regularization point of view, this is an insufficient
stability analysis for the study of inverse ill-posed problems 
aims to provide answer for real life application problems. 
Therefore, in order to fill this gap in the context of 
variational regularization,
this work aims to go further than just defining 
a step-length for the stability of a convex optimization 
algorithm.

In general terms, regularization theory deals with the approximation
of some ill-posed inverse problems by a family of parametrized 
well-posed problems. Traditional quadratic-Tikhonov regularization
\textbf{\cite{Tikhonov63, TikhonovArsenin77}} has been
well established and analyzed \textbf{\cite{Engl96}}.
As an alternative to classical Tikhonov regularization,
convex variational regularization with some general
penalty functional has gained interest over the last decade. 
Analysis of convex variational regularization has been motivated
by a new image denoising method called \textit{``total variation``} 
\textbf{\cite{RudenOsherFatemi92}}.
Further studies of the method have been widely carried
out in the communities of inverse problems and optimization 
\textbf{\cite{AcarVogel94, BachmayrBurger09, BardsleyLuttman09, 
ChambolleLions97,
ChanChen06, ChanGolubMulet99, DobsonScherzer96, 
DobsonVogel97, VogelOman96}}. 

Recently proposed non-smooth penalty terms have risen
the interest in the study of variational regularization. 
Convexity of the objective functional is the most essential property
to analyze the stability of the regularized
solution of the inverse problem, or equivalently the minimization problem. 
Formulating the minimization problem 
as variational problem and estimating convergence rates under 
the assumption that the minimum norm solution
satisfies a type of \textit{variational source condition} (VSC)
has been well established, \textbf{\cite{BurgerOsher04, Grasmair10, 
Grasmair13, GrasmairHaltmeierScherzer11, HofmannScherzer07, Lorenz08}}.
In addition to the estimation of the convergence rates, 
verification of VSC has also become popular, 
see \textbf{\cite{HohageWeidling15, HohageWeidling16, SprungHohage17}}.
A recent study on the existence of the VSCs for linear or non-linear problems
can be found in \textbf{\cite{Flemming18}}.

When obtaining the stable regularized solution, it is also
important that this solution meets the constraints
of the inverse problems. These contraints are defined mostly
due to the physical facts of the solution. Therefore, the necessity 
of solving a constrained minimization problem is well understood.
We are tasked with approximating the solution of some constrained
minimization problem by an efficient proximal gradient algorithm
as an iterative regularization method. 

Inverse problems arise in many scientific fields: X-ray computed 
tomography, image processing, signal processing, wave scattering, 
shape reconstruction, etc. We must emphasize that although
many problems lie in the same family of inverse problems, 
each problem may differ from each other depending on their 
physical or engineering facts.
The properties of the targeted data and the forward operator
that we define in the following section are 
suitable for tomographic reconstruction problems.

%%%%%%%%%%%%%%%%%%%%%%%%%%%%%
% Notations
%%%%%%%%%%%%%%%%%%%%%%%%%%%%%

\section{Notations and Mathematical Setting}
\label{notations}

Over the finite dimensional Hilbert spaces $\mathcal{X} = \mathbb{R}^{N}$
and $\mathcal{Y} = \mathbb{R}^{M}$, let us assume some linear, injective,
forward operator $T : \mathcal{X} \rightarrow \mathcal{Y}.$ We 
consider the following linear ill-posed problem
\begin{eqnarray}
\label{inverse_problem}
\delta\xi + T u = v^{\delta},
\end{eqnarray}
where the given noisy data $v^{\delta} \in \mathcal{Y}$ and 
the noise model $\xi$ with the noise
magnitude $\delta.$
Furthermore, we also impose non-negativity constraint 
on our targeted data $u.$
Then, the constraint domain $\Omega : = 
\{u \in \mathcal{X} : u_i \geq 0 
\mbox{ for each } i = 1, \cdots, N \},$
is treated as the indicator function, 
\textbf{\cite{BorweinLuke11}}, that is defined by
\begin{eqnarray}
\label{indicator}
h(u) = 1_{\Omega}(u) := \left\{ \begin{array}{rcl}
0 & \mbox{, for} & u \in \Omega \subset \mathcal{X} \\ 
\infty  & \mbox{, for} & u \notin \Omega \subset \mathcal{X} .
\end{array}\right.
\end{eqnarray}

Throughout the work, unless otherwise stated,
the notation $\Vert \cdot \Vert$ without any subscript
will be used to denote the usual Euclidean norm.
Let $\sigma(T^{\ast}T):= \{\sigma_1, \sigma_2, \cdots \sigma_M \}$  
be the spectrum of $T^{\ast}T.$
Then, for the finite dimensional forward operator 
$T : \mathbb{R}^{N} \rightarrow \mathbb{R}^{M}$ 
where $M < N,$ we define
\begin{displaymath}
\Vert T\Vert := \max_{1\leq k \leq M} 
\left\{ \sqrt{\sigma_{k}} \right\}.
\end{displaymath}

Regarding the stability analysis for the algorithm,
we will refer to one fundamental equality that
has been also given in \textbf{\cite[Eq. (2.1)]{Takahashi13}}.
For some $u_1,u_2 \in \mathcal{X}$ and $\rho \in \mathbb{R},$ 
the following equality holds, 
\begin{eqnarray}
\label{strct_cvx_eq}
\Vert\rho u_1 + (1-\rho)u_2\Vert^2 = \rho\Vert u_1 \Vert^2 + 
(1-\rho)\Vert u_2\Vert^2 -\rho(1-\rho)\Vert u_1 - u_2\Vert^2.
\end{eqnarray}

For some real valued and convex function 
$f : \mathcal{X} \rightarrow \mathcal{X}$ and some point 
$x$ in the domain of $f,$ the {\em subdifferential} of $f$ at 
$x^{\prime},$ denotes $\partial f(x^{\prime})$ is defined by
\begin{eqnarray}
\label{def_subdiff}
\partial f(x^{\prime}) := \left\{ \eta \in \mathcal{X}^{\ast} :
f(x) - f(x^{\prime}) \geq \langle \eta , x - x^{\prime} \rangle
\mbox{ for all } x \in \mathcal{X} \right\}.
\end{eqnarray}

\begin{definition}\textbf{[Generalized Bregman Distance]}
Let $\mathcal{J} : \mathcal{X} \rightarrow \mathbb{R}_{+} \cup \{ \infty \}$ 
be a convex functional 
with the subgradient $q^{\ast} \in \partial \mathcal{J}(u^{\ast}).$ 
Then, for $u, u^{\ast} \in \mathcal{X},$ the Bregman distance 
associated with the functional $\mathcal{J}$ is defined by
\begin{eqnarray}
\label{bregman_divergence_intro}
D_{\mathcal{J}} : & \mathcal{X} \times \mathcal{X} & 
\longrightarrow \mathbb{R}_{+}
\nonumber\\
& (u , u^{\ast}) & \longmapsto D_{\mathcal{J}}(u , u^{\ast}) 
:= \mathcal{J}(u) - \mathcal{J}(u^{\ast}) - 
\langle q^{\ast} , u - u^{\ast} \rangle .
\end{eqnarray}
It is well known that the Bregman distance does not satisfy
symmetry,
\begin{displaymath}
D_{\mathcal{J}}(u , u^{\ast}) \neq D_{\mathcal{J}}(u^{\ast} , u),
\end{displaymath}
and for the defined convex functional $\mathcal{J},$
\begin{displaymath}
D_{\mathcal{J}}(u , u^{\ast}) \geq 0.
\end{displaymath}
\end{definition}

Over the past decades, variational and traditional
regularization strategies have been dedicated to find
the stable minimum of the generalized form of the Tikhonov 
functional
\begin{eqnarray}
\label{obj_functional0}
H_{\alpha}: & \mathcal{X} & \times \mathcal{Y} \longrightarrow 
\mathbb{R}_{+}
\nonumber\\
& (u &, v^{\delta}) \longmapsto H_{\alpha}(u,v^{\delta}) := 
\frac{1}{2\alpha} \Vert Tu - v^{\delta}\Vert^2 + \mathcal{J}(u),
\end{eqnarray}
with a convex, lower-semicontinuous, not necessarily smooth
penalty functional
$\mathcal{J} : \mathcal{X} \longrightarrow 
\mathbb{R}_{+} \cup \{ \infty \}$ and
a regularization parameter $\alpha > 0.$ On the other 
hand, we consider the following objective functional,
\begin{eqnarray}
\label{obj_functional2}
F_{\alpha}: & \mathcal{X} & \times \mathcal{Y} 
\longrightarrow \mathbb{R}_{+}
\nonumber\\
& (u &, v^{\delta}) \longmapsto F_{\alpha}(u,v^{\delta}) := 
\frac{1}{2} \Vert Tu - v^{\delta}\Vert^2 +  
\alpha D_{\mathcal{J}}(u , u^{0}) + h(u),
\end{eqnarray}
with some initial estimation $u^{0} \in \mathcal{X},$ 
where we have included the indicator of the positive 
orthant. In particular, we
associate the Bregman distance penalty term with
the total variation (TV) functional defined by
\begin{eqnarray}
\label{TV_penalty}
TV(u, \Omega) = \mathcal{J}(u) := \int_{\Omega} 
\vert \nabla u(x) \vert dx 
\approxeq \sum_{i} \vert \nabla_{i} u \vert ,
\end{eqnarray}
which is, in the 3D case, $i = (i_x, i_y, i_z).$ 
For some $u_1, u_2$ in the domain of $T,$ 
from Lipschitz continuity of the misfit term 
$\frac{1}{2} \Vert Tu - v^{\delta}\Vert^2$
that is, 
\begin{eqnarray}
\Vert T^{T}T(u_{1} - u_{2})\Vert^2 & \leq & 
\Vert T\Vert^2\Vert T(u_{1} - u_{2})\Vert^2
\nonumber\\
& = &\Vert T \Vert^2\langle T(u_{1} - u_{2}) , 
T(u_{1} - u_{2}) \rangle
\nonumber\\
& = & \Vert T \Vert^2\langle T^{T}T(u_{1} - u_{2}) , 
u_{1} - u_{2} \rangle , 
\nonumber
\end{eqnarray}
one can easily observe the following,
\begin{eqnarray}
\label{LipschitzEstMisfit}
-\langle u_{1} - u_{2} , 
T^{T}T(u_{1} - u_{2}) \rangle \leq 
-\frac{1}{\Vert T\Vert^2}\Vert T^{T}T(u_{1} - u_{2})\Vert^2.
\end{eqnarray}

For the sake of following further calculations 
easily in future developments of this work, 
we introduce TV in the composite form
\begin{eqnarray}
\label{composite_TV}
J(u) = g(D u) \mbox{ where, }
g(\cdot) = \Vert \cdot\Vert_{1} \mbox{ with }
D(\cdot) = \nabla (\cdot).
\end{eqnarray}
Thus,
\begin{eqnarray}
\label{subdiff_TV}
\partial J(u) = D^{\ast} \partial g(Du).
\end{eqnarray}
Total variation type regularization targets 
the reconstruction of bounded variation (BV) 
class of vectors, which are functions 
in the infinite dimensional mathematical setting, 
that are defined by
\begin{eqnarray}
\label{bv_def}
BV(\Omega) := \{ u \in \ell^{1}(\Omega) : 
TV(u , \Omega) < \infty \}
\end{eqnarray}
with the norm
\begin{eqnarray}
\label{BV_norm_def}
\Vert\varphi\Vert_{BV} := \Vert u \Vert_{1} + TV(u , \Omega).
\end{eqnarray}
Let the mean value 
$MV : \ell^{1}(\Omega) \rightarrow \mathbb{R}$ 
be defined by
\begin{eqnarray}
\label{mean_value_funct}
MV[u] := \frac{1}{\vert \Omega \vert} \int_{\Omega} u(x) dx .
\end{eqnarray}
It has been stated in \textbf{\cite[Eq. (4.3)]{AcarVogel94}} that, 
over the bounded domain $\Omega,$ 
for any $u \in BV(\Omega)$ has the following decomposition
\begin{eqnarray}
\label{BV_decomp1}
u = \tilde{u} + MV[u] \vec{1},
\end{eqnarray}
where 
\begin{eqnarray}
\label{BV_decomp2}
MV[\tilde{u}] = 0.
\end{eqnarray}
%%%%%%%%%%%%%%%%%%%%%%%%%%%%%%%%%%%%%%%%%%
% Discussion and Controbution
%%%%%%%%%%%%%%%%%%%%%%%%%%%%%%%%%%%%%%%%
\section{Discussion of Previous Works and Contribution}

In an early study \textbf{\cite{OsherBurger05}}, 
Bregman iteration has been
proposed to solve the basis pursuit problem. 
Therein, it has been numerically demonstrated that the efficiency 
of the algorithms gets improved with the inclusion of the Bregman 
distance playing the role of
penalty term in the objective functional $F_{\alpha}$ in 
(\ref{obj_functional2}).
In a recent study by Sprung and Hohage \textit{et al.}, 2017,
\textbf{\cite{SprungHohage17}}, it has been stated that minimizing the
objective functionals in the form of (\ref{obj_functional0})
can be interpreted as proximal point methods 
when one considers the penalty term as
a quadratic function. 
Applying optimization algorithms in the field of inverse problems
as iterative regularization method has become popular. Authors in 
\textbf{\cite{GarrigosVilla18}} have proposed some primal-dual
algorithm, wherein the convergence has been studied for the given
noiseless measurement data. Different forms of nested primal-dual algorithms 
for solving proximal mappings have been introduced in 
\textbf{\cite{ChenLoris18}}.

We consider linear, inverse 
ill-posed problems in the general form (\ref{inverse_problem}) 
and study the stability
of both iterative and non-iteartive regularized solutions in the context
of convex variational regularization. The main results of our work
are derived in the presence of noisy measurements and specifically address
 convex variational regularization. From the subdifferential
characterization of the regularized minimizer for the functional
(\ref{obj_functional2}), a new iterative
regularization algorithm shall be developed. In Section \ref{primal-dual},
stability analysis of the iteratively regularized solution
is analyzed in the Hadamard sense.

%%%%%%%%%%%%%%%%%%%%%%%%%%%%%%%%%%%%%%%%%%%%%%%%%%%%%%%%%
% Overview on the fundamentals of regularization theory
%%%%%%%%%%%%%%%%%%%%%%%%%%%%%%%%%%%%%%%%%%%%%%%%%%%%%%%%%
\section{Overview of the Fundamentals of the Regularization Theory}
\label{regularization_overview}

We split this section into three subsections. Section 
\ref{cont_reg} reviews the regularization theory 
in the continuous sense. This part will be used to analyse 
the convergence of the regularized
minimum $u_{\alpha}^{\delta}$ towards the minimum norm solution
$u^{\dagger}.$ A choice of the regularization parameter which is
\textit{a-posteriori} obeying Morozov`s dicrepancy principle 
is introduced in the following up sections.
Section \ref{subsection_iter_reg}, on the other hand, reviews 
the iterative regularization theory which will be used for 
showing the convergence of the iteratively regularized minimum 
$u_{i+1}$ still towards the minimum norm solution $u^{\dagger}.$

\subsection{Continuous regularization}
\label{cont_reg}
In the Banach space setting, the concept of 
\textbf{$\mathcal{J}$-minimizing solution} is well known for the
functionals in the form of (\ref{obj_functional0}), 
see {\em e.g.,}\textbf{\cite[Lemma 3.3]{Schuster12}, 
\cite{Flemming18}}. 
In our case, since Bregman distance is associated with the TV
functional, we have
\begin{eqnarray}
\label{J-min_est}
T u^{\dagger} = v^{\dagger} \mbox{ and }
\Vert u^{\dagger} \Vert = 
\min \{\Vert u \Vert : u\in \mathrm{BV}(\Omega) , 
T u = v^{\dagger} \}.
\end{eqnarray}
According to our mathematical setting, the functional
$\mathcal{J} : \Omega \subset \mathcal{X} \rightarrow \mathbb{R}_{+}$ 
attains some finite value
only at some finite point $u \in \mathrm{BV}(\Omega),$ 
\textbf{\cite[Assumption 1.1 (i)]{Flemming18}}. Moreover, 
our linear forward operator is defined on a 
uniformly convex Banach space
\textbf{\cite[Theorem 2.53(k) \& Lemma 3.3]{Schuster12}}.
In case of $u^{0}$ to be constant, then from our setting
above (\ref{composite_TV}) and (\ref{subdiff_TV}), our notation in
(\ref{J-min_est}) boils down to its conventional form 
named $\mathcal{J}$-minimizing solution. In what follows, 
the minimum norm solution that has just been introduced by 
(\ref{J-min_est}) will be denoted by $u^{\dagger}.$

Briefly speaking, establishing convergence result for
some regularization method in the Hadamard sense begins with 
seeking to approximate the true solution by a family
of regularized solutions of the problem
\begin{eqnarray}
\label{problem0}
u_{\alpha}^{\delta} \in \argmin_{u \in \mathcal{X}} F_{\alpha} ,
\end{eqnarray}
satisfying the following properties:
\begin{enumerate}
\item For any $v^{\delta} \in \mathcal{Y}$ there exists a solution 
$u_{\alpha}^{\delta}\in \mathcal{X}$ to the problem (\ref{problem0});
\item For any $v^{\delta} \in \mathcal{Y}$ there is no more than one 
$u_{\alpha}^{\delta}\in \mathcal{X};$
\item Convergence of the regularized solution 
$u_{\alpha}^{\delta}$ 
to the minimum norm solution $u^{\dagger}$ must continuously 
depend on the given data $v^{\delta}$, \textit{i.e.}
\begin{displaymath}
\Vert u_{\alpha}^{\delta} - u^{\dagger}\Vert 
\rightarrow 0 \mbox{ as } \alpha = \alpha(\delta , v^{\delta}) 
\rightarrow 0 \mbox{ for } \delta \rightarrow 0
\end{displaymath}
whilst
\begin{displaymath}
\Vert v^{\dagger} - v^{\delta}\Vert \leq \delta
\end{displaymath}
where $v^{\dagger} \in \mathcal{R}(T) \subset \mathcal{Y}$ 
is the noiseless measurement
and $\delta$ is the noise level.
\end{enumerate}
Existence and uniqueness of the regularized minimizer for the functional
(\ref{obj_functional2}) is ensured by the following facts: 
Bregman distance is convex and lower semi-continuous in its first
term, and so is the indicator function.
In (iii) it is stated that when the given measurement $v^{\delta}$
lies in some $\delta-$ball centered at the noisless measurement 
$v^{\dagger}$,
\textit{i.e.} $v^{\delta}\in\mathcal{B}_{\delta}(v^{\dagger}),$ 
then the regularized solution $u_{\alpha}^{\delta}$ must 
converge to the minimum norm solution of the inverse problem 
$u^{\dagger}$ as the regularization parameter 
$\alpha(\delta , v^{\delta})$ 
decays sufficiently, see 
\textbf{\cite[Section 2, Properties (2.1) - (2.3)]{Engl96}}.
The main results of this work are dedicated to justify the condition
`(iii)' with the inclusion of the noisy measurement $v^{\delta}.$

It is well known
that some regularization operator can be defined explicitly
for the objective functionals in the form of 
(\ref{obj_functional0}) with 
$\mathcal{J}(\cdot) = \frac{1}{2}\Vert \cdot\Vert^2,$ 
\textbf{\cite[Definition 3.1]{Engl96}}. This is not the case
anymore in the variational regularization strategy due to
the non-smoothness of the penalty term $\mathcal{J}$.
However, a regularization procedure definition 
can still be given fulfilling Hadamard`s principle.
Depending on the asymptotics of the regularization parameter
$\alpha,$ 
\begin{eqnarray}
\label{regularization_strategy}
\alpha(\delta , v^{\delta}) \rightarrow 0 \mbox{ and } \frac{\delta^2}{\alpha(\delta , v^{\delta})} \rightarrow 0
\mbox{, as } \delta \rightarrow 0,
\end{eqnarray}
regularization theory is concerned with the error estimation for the difference between the approximately regularized 
solution $u_{\alpha}^{\delta}$ and minimum norm solution
$u^{\dagger}$
of the inverse problem (\ref{inverse_problem}) as defined 
before.

Iterative regularization methods aim to produce stable approximate
solutions to the problem (\ref{problem0}) throughout some
iterative procedure, \textit{e.g.,} with the focus on minimizing
the discrepancy $\Vert T u - v^{\delta}\Vert$ by producing
a sequence of iterates $\hat{u}_{i}.$ Usually the iteration
is terminated at the iteration step $N$ by the choice of some stopping 
criterion, which is {\em a-posteriori} 
$N = N(v^{\delta} , \delta).$
This work concerns with the choice of the regularization parameter
not only in the continuous sense \textit{i.e.,} 
$\alpha = \alpha(v^{\delta} , \delta),$
but also within the iterative procedure \textit{i.e.,} 
$\alpha_{n} = \alpha_{n}(v^{\delta} , \delta).$
The scientific notion behind these notations can be reviewed
in \textbf{\cite[pp. 63]{Schuster12}}. 
The following definition is a quick adaptation of
\textbf{\cite[Definition 3.20]{Schuster12}} for our mathematical
setting and furthermore suits our iterative regularization scheme
which shall be introduced in Section \ref{iterative_convergence}.

%\begin{definition}{\textbf{[Regularization]}}
%A mapping that transforms every pair 
%$(v^{\delta}, \delta) \in \cY \times (0 , \overline{\alpha}]$
%with $0 < \overline{\alpha} \leq + \infty$ to some well-defined
%element $u_{\alpha}^{\delta} \in \cX$ is called a 
%{\em regularization} (procedure) for the linear operator
%equation (\ref{inverse_problem}), if there exists an appropriate
%choice of the regularization parameter 
%$\alpha = \alpha(v^{\delta} , \delta)$
%such that, for every sequence $\{v_{n}\}_{n=1}^{\infty}$
%with $\norm{v_{n} - v^{\delta}} \leq \delta_{n}$
%and $\delta_n \rightarrow 0$ as $n \rightarrow \infty,$
%the corresponding regularized solutions 
%$u_{\alpha(v_n , \delta_n)}^{\delta_n}$ converge 
%in a well-defined sense
%to the minimum norm solution $u^{\dagger}.$
%\end{definition}
Therefore, we will investigate the stability of the iterative
procedure as the number of the iterative steps tend to infinity.
In this work, the iterative regularization procedure 
involves proximal mappings.

\begin{definition}\textbf{[Proximal mapping]}
\label{def_prox}
Let $\mathcal{J} : \mathbb{R}^{N} \rightarrow \mathbb{R} \cup \{ \infty \}$
be a proper, convex, lower-semicontinious function.
Then $\mathrm{prox}_{\mathcal{J}}$ is defined as the unique minimizer
\begin{eqnarray}
\nonumber
\mathrm{prox}_{\mathcal{J}}(\tilde{u}) := 
\argmin_{u \in \mathbb{R}^{N}} \mathcal{J}(u) + 
\frac{1}{2} \Vert u - \tilde{u}\Vert^2.
\end{eqnarray}
\end{definition}

%------------------------------------------
% Variational inequalities
%------------------------------------------
\subsection{Smoothness of the minimum norm solution under variational inequalities}
\label{data_smoothness}

Measuring the deviation of the regularized solution 
$u_{\alpha}^{\delta}$ 
from the minimum norm solution 
by \textit{a-priori} and \textit{a-posteriori}
strategies for the choice of the regularization parameter
in Banach spaces with the VSC has been widely studied
\textbf{\cite{BenningSchoenlib17},
\cite{Flemming18}, \cite{Grasmair10},   
\cite[Eq. (1.4)]{HofmannMathe12}, \cite[Section 4]{Kindermann16}
\cite[Theorem 2.60 - (g), Subsection 3.2.4]{Schuster12}}.
The objective is to bound the total error estimation
function defined, for some coefficient
$\Lambda \in \mathbb{R}_{+}$ depending on the functional 
properties of $\mathcal{J}$ and its domain, by
\begin{eqnarray}
E: & \mathcal{X} \times \mathcal{X} & \longrightarrow \mathbb{R}_{+}
\nonumber\\
\label{total_err_est}
& (u_{\alpha}^{\delta} , u^{\dagger}) & \longrightarrow
E(u_{\alpha}^{\delta} , u^{\dagger}) := 
\Lambda\Vert u_{\alpha}^{\delta} - u^{\dagger}\Vert.
\end{eqnarray} 
Different forms of the VSC have been considered for establishing
convergence and convergence rates results. A recent and concise
work on the verification of the VSCs in general terms has been
studied by Flemming \textit{et.al.,} 2018 
\textbf{\cite{Flemming18}}.

Our work does not focus on verification of the VSC. However,
by using fundamental functional analysis, it is still possible
to give mathematical motivation for the formulation of our VSC.
First, according to the Poincar\'{e}-Wirtinger inequality, see
\textbf{\cite[Theorem 3.1]{Bergounioux11}}, for some 
$u \in \mathrm{BV}(\Omega),$ there exists some constant
$C_{\Omega}$ such that
\begin{eqnarray}
%\label{poincare-wirtinger}
\Vert\nabla u\Vert_1 \geq \frac{1}{C_{\Omega}} 
\Vert u - \mathrm{MV}[u]\vec{1}\Vert_1
\nonumber
\end{eqnarray}
holds. Addition to the fundamental inequality,
the early established result in 
\textbf{\cite[Eq. (3.43)]{Altuntac16}} and 
equivalence of the norm in the finite dimensional setting, 
lead to
\begin{eqnarray}
%\label{poincare-wirtinger2}
\Vert\nabla u\Vert_1 \geq \frac{1}{C_{\Omega}} 
\Vert u - \mathrm{MV}[u]\vec{1}\Vert_1 \geq 
\frac{1}{2}\Vert u\Vert_1
\geq \frac{1}{2}\Vert u\Vert_2 .
\nonumber
\end{eqnarray}
%
%for some $v \in \Omega,$
%one can easily derive that
%\begin{eqnarray}
%\norm{\nabla u}_1 & \geq & \frac{1}{C_{\Omega}}
%\norm{(u - v) - (\mathrm{MV}[u]\vec{1} - v)}_1
%\nonumber\\
%& \geq & \frac{1}{C_{\Omega}}\norm{u-v}_1 - 
%\frac{1}{C_{\Omega}}\norm{\mathrm{MV}[u]\vec{1} - v}_1 .
%\nonumber
%\end{eqnarray}
\noindent With that being stated, VSC could rather be formulated as a direct
estimator for the total error functional (\ref{total_err_est}).
\begin{assump}
\label{assump_conventional_variational_ineq}
\textbf{[Variational Source Condition]}
Let $T : \mathcal{X} \rightarrow \mathcal{Y}$ be linear, injective 
forward operator and $v^{\dagger} \in \mathrm{range}(T).$ 
There exists some constant 
$\sigma \equiv \sigma(C_{\Omega})\in (0 , 1]$
and a concave, monotonically increasing
index function $\Psi$ with $\Psi(0) = 0$ and 
$\Psi : [0 , \infty) \rightarrow [0 , \infty)$
such that for $q^{\dagger} \in \partial \mathcal{J}(u^{\dagger})$
the minimum norm solution $u^{\dagger}\in\mathrm{BV}(\Omega)$
satisfies
\begin{eqnarray}
\label{variational_ineq}
\sigma\Vert u-u^{\dagger}\Vert \leq 
\mathcal{J}(u) - \mathcal{J}(u^{\dagger})
+ \Psi\left( \Vert T u - T u^{\dagger}\Vert \right)  
\mbox{, for all } u \in \mathcal{X} .
\end{eqnarray} 
\end{assump}
Above, the estimation of the coefficient function $\sigma$
is an open question to be answered under the consideration of
$\mathcal{J} = TV.$

In some works, the Bregman distance itself is directly taken as
the estimator for the total error functional (\ref{total_err_est})
and VSC is formulated
as an upper bound for the Bregman distance, see \textit{e.g.}
\textbf{\cite{HofmannMathe12}}. The reader can find our further
convergence result based on this formulation of the VSC
in the Appendix \ref{VSC2}.

One straightforward conclusion of VSC follows from 
non-negativity of the total error functional (\ref{total_err_est}),
\begin{eqnarray}
\label{J-diff1}
\mathcal{J}(u^{\dagger}) - \mathcal{J}(u) \leq 
\Psi\left( \Vert T u - T u^{\dagger}\Vert\right),
\end{eqnarray}
for all $u \in \mathcal{X}.$ We furthermore state one fundamental idendity
that is valid for the concave functions, see 
\textbf{\cite[Proposition 1]{HofmannMathe12},
\cite[Eq. (4.37)]{Schuster12}},
\begin{eqnarray}
\label{concave_id}
\Psi(K \delta) \leq K\Psi(\delta), \mbox{ for } K \geq 1.
\end{eqnarray}

We will develop convergence rates results from two different
aspects. Firstly, the convergence of the non-iterative
regularized solution $u_{\alpha}^{\delta}$
of the problem (\ref{problem0})
towards the minimum norm solution $u^{\dagger}.$ 
We conduct this analysis in the continuous setting. 
Secondly, we study the convergence of the
iteratively regularized solution 
$\hat{u}_{\alpha_{i}}^{\delta},$
where $i = 1, 2, \cdots$ indicate the iteration step,
produced by some primal-dual algorithm still towards 
the minimum norm solution $u^{\dagger}.$
In both investigations, the convergence rates  
will be expressed in terms of the index
function $\Psi$ and be achieved by \textit{a-posteriori} choice 
of the regularization parameter $\alpha$ , see Section
\ref{choice_of_regpar} for the details.

%
%That is, owing to the {\em a-posteriori} choice
%of the regularization parameter 
%$\alpha = \alpha(\delta , v^{\delta}),$
%with the deterministic noise model 
%$v^{\delta} \in \mathcal{B}_{\delta}(v^{\dagger})$ 
%in the measurement space, 
%convergence of the regularized solution 
%$u_{\alpha}^{\delta}$ 
%to the minimum norm solution $u^{\dagger}$ by the rate of
%the noise amount $\cO(\Psi(\delta)),$ {\em i.e.,}
%$\norm{u_{\alpha}^{\delta} - 
%u^{\dagger}} = \cO(\Psi(\delta)),$ equivalently
%$u_{\alpha}^{\delta} \in \mathcal{B}_{\cO(\Psi(\delta))}(u^{\dagger}).$

%-------------------------------------
% Choice of regularization parameter
%-------------------------------------

\subsection{Choice of the Regularization Parameter: Morozov's Discrepancy Principle}
\label{choice_of_regpar}

We are concerned with asymptotic properties of the
regularization parameter $\alpha$ for the Tikhonov-regularized 
solution obtained by Morozov's discrepancy principle (MDP).
MDP serves as an \textit{a posteriori}
parameter choice rule for the Tikhonov type objective functionals  
(\ref{obj_functional2}) and has certain impact 
on stabilizing the total error functional 
$E$ having the assumed relation (\ref{total_error_bregman}).
As has been introduced in 
\textbf{\cite[Theorem 3.10]{AnzengruberRamlau10}} 
and \textbf{\cite{AnzengruberRamlau11}},
we use the following set notations
in the theorem formulations that are necessary
to establish the error estimation between the regularized solution
$u_{\alpha}^{\delta}$ for the problem (\ref{problem0})
and the minimum norm solution $u^{\dagger},$
\begin{eqnarray}
\label{MDP1}
\overline{S} & := & \left\{ \alpha : 
\Vert T u_{\alpha}^{\delta} - v^{\delta}\Vert 
\leq \overline{\tau}\delta \mbox{ for } 
u_{\alpha}^{\delta} = \argmin_{u \in \mathcal{X}} 
\{ F_{\alpha}(u , v^{\delta})\} \right\} ,
\\
\label{MDP2}
\underline{S} & := & \left\{ \alpha : \underline{\tau}\delta \leq 
\Vert T u_{\alpha}^{\delta} - v^{\delta}\Vert 
\mbox{ for } 
u_{\alpha}^{\delta} = \argmin_{u \in \mathcal{X}} 
\{ F_{\alpha}(u , v^{\delta})\} \right\} ,
\end{eqnarray}
where the discrepancy set radii 
$1 < \underline{\tau} \leq \overline{\tau} < \infty$ are fixed.
Analogously, also as well known from
\textbf{\cite[Eq. (4.57) and (4.58)]{Engl96}} and
\textbf{\cite[Definition 2.3]{Kirsch11}},
we are interested in such a regularization parameter 
$\alpha(\delta , v^{\delta}),$ 
with some fixed discrepancy set radii 
$1 < \underline{\tau} \leq \overline{\tau} < \infty ,$ that
\begin{eqnarray}
\label{discrepancy_pr_definition}
\alpha(\delta , v^{\delta}) \in \{ \alpha > 0 
\mbox{ }\vert \mbox{ }
\underline{\tau}\delta \leq 
\Vert T u_{\alpha}^{\delta} - v^{\delta}\Vert 
\leq \overline{\tau}\delta \} 
= \overline{S} \cap \underline{S} \mbox{ for the given } 
(\delta , v^{\delta}).
\end{eqnarray}
It is also the immediate consequences of MDP that 
the following estimations
\begin{eqnarray}
\label{consequence_MDP}
\Vert T u_{\alpha}^{\delta} - T u^{\dagger}\Vert \leq 
(\overline{\tau} + 1)\delta,
\\
\label{consequence_MDP2}
(\underline{\tau} - 1)\delta \leq 
\Vert T u_{\alpha}^{\delta} - T u^{\dagger}\Vert,
\end{eqnarray}
hold true. 

%-------------------------------------------------
% Iterative regularization
%-------------------------------------------------

\subsection{Iterative regularization}
\label{subsection_iter_reg}
By an iterative procedure involving some iteration operator $R_{I},$
\cite[Ch. 6]{BenningBurger17},
we aim to construct some approximation to the given inverse
ill-posed problem (\ref{inverse_problem})
\begin{eqnarray}
\label{iterative_proc}
u_{i} = R_{I}(v^{\delta} , w_{i-1} , \Gamma),
\end{eqnarray}
where $w^{i-1}$ is the collection of dual variables used 
during $i-1$ iteration steps, and $\Gamma$ is the 
auxiliary parameters such as step-size, relaxation parameter, 
regularization parameter.

In the iterative regularization procedures, discrepancy principles act as
the stopping rules for the corresponding algorithms, 
\textbf{\cite[Section 6]{BenningBurger17}}.

\begin{definition}{[Morozov`s Discrepancy Principle (MDP), 
\cite[Def. 6.1]{BenningBurger17}]}

Given deterministic noise model 
$\Vert v^{\dagger}-v^{\delta}\Vert \leq \delta,$ if we choose
$\tau > 1$ and $i^{\ast} = i^{\ast}(\delta , v^{\delta})$ such that
\begin{eqnarray}
\Vert T u_{i^{\ast}} - v^{\delta}\Vert \leq 
\tau \delta < \Vert T u_{i} - v^{\delta}\Vert
\end{eqnarray}
is satisfied for $u_{i^{\ast}} = 
R_{I}(v^{\delta} , w_{i^{\ast}-1} , \vec{\alpha})$ and 
$u_{i} = R_{I}(v^{\delta} , w_{i-1} , \vec{\alpha})$ for all
$i < i^{\ast},$ then $u_{i^{\ast}}$ is said to satisfy {\em Morozov`s 
discrepancy principle}.
\end{definition}
Following up MDP, some immediate consequences 
can be given below,
\begin{eqnarray}
\label{consequence_MDP}
\Vert T u_{i^{\ast}}^{\delta} - T u^{\dagger}\Vert 
\leq (\tau + 1)\delta ,
\end{eqnarray}
likewise,
\begin{eqnarray}
\tau\delta \leq \Vert T u_{i} - v^{\delta}\Vert & \Rightarrow & 
\tau\delta \leq \Vert T u_{i} - T u^{\dagger}\Vert + \delta
\nonumber\\ 
& \Rightarrow & 
(\tau - 1)\delta \leq \Vert T u_{i} - T u^{\dagger}\Vert
\nonumber\\ 
& \Rightarrow & 
(\tau - 1)^2\delta^2 \leq \Vert T u_{i} - T u^{\dagger}\Vert^2
\nonumber\\ 
& \Rightarrow &
-\Vert T u_{i} - T u^{\dagger}\Vert^2 \leq -(\tau - 1)^2\delta^2 .
\label{consequence_MDP2}
\end{eqnarray}

%%%%%%%%%%%%%%%%%%%%%%%%%%%%%%%%%%%%%%%%%
% Section; Subgradient characterization
%%%%%%%%%%%%%%%%%%%%%%%%%%%%%%%%%%%%%%%%%

\section{Subgradient Characterization of the Regularized Solution $u_{\alpha}^{\delta}$: Primal-Dual Splitting}
\label{section_subdiff_char}

Still in the continuous setting, the subgradient characterization
of the regularized solution $u_{\alpha}^{\delta}$ of the problem
(\ref{problem0}) is to be studied. Our iterative regularization algorithm
will arise from characterization of the primal and the dual
solutions. This characterization will form some coupled system
which makes the primal solution depend on the dual one, and vice versa.
Before moving on to the characterization, let us introduce some
more notations.
Since $u_{\alpha}^{\delta}$ is the minimizer of the 
objective functional (\ref{obj_functional2}), then 
by the first order optimality condition
\begin{eqnarray}
\nonumber
0 \in \partial F_{\alpha}(u_{\alpha}^{\delta} , v^{\delta}),
\end{eqnarray}
which implies,
\begin{eqnarray}
\label{char_opt_cond}
0 = \frac{1}{\alpha}T^{T}(T u_{\alpha}^{\delta} - v^{\delta})
+ \partial \mathcal{J}(u_{\alpha}^{\delta}) - 
\partial \mathcal{J}(u^{0}) + \hat{z} 
\mbox{, where }
\hat{z} \in \partial h(u_{\alpha}^{\delta}).
\end{eqnarray}
Furthermore, recall the settings in (\ref{composite_TV}) 
and (\ref{subdiff_TV}) to represent (\ref{char_opt_cond})
in the following form
\begin{eqnarray}
\nonumber
0 = \frac{1}{\alpha} T^{T}(T u_{\alpha}^{\delta} - v^{\delta})
+ D^{T} \hat{w}_{\alpha}^{\delta} - D^{T} \hat{w}^{0} + \hat{z} ,
\end{eqnarray}
where $\hat{w}_{\alpha}^{\delta} \in \partial g(Du_{\alpha}^{\delta})$
and likewise
$\hat{w}^{0} \in \partial g(Du^{0}),$ and we recall that
$g(\cdot) = \Vert\cdot\Vert.$ Then, the subdifferential characterization
of the regularized solution is formulated below.
%========================================
% Theorem; Subgradient characterization
%========================================

\begin{theorem}
\label{thrm_subgrad_char}
The regularized minimizer $u_{\alpha}^{\delta}$ of the 
objective functional (\ref{obj_functional2}) is characterized by
\begin{eqnarray}
\label{eq_subgrad_char}
\left\{ \begin{array}{rcl}
u_{\alpha}^{\delta} &=& \mathrm{prox}_{\mu h}\left[u_{\alpha}^{\delta}  
- \mu \left( \frac{1}{\alpha}T^{T}(T u_{\alpha}^{\delta} - v^{\delta})
+ D^{T}(\hat{w}_{\alpha}^{\delta} - \hat{w}^{0} ) \right) \right]
\\
\hat{w}_{\alpha}^{\delta} &=& \mathrm{prox}_{\nu g^{\ast}} 
\left( \hat{w}_{\alpha}^{\delta} + \nu Du_{\alpha}^{\delta} \right),
\end{array}\right.
\end{eqnarray}
with $\hat{w}^{0} = \partial \Vert D u^{0}\Vert_{1}.$
\end{theorem}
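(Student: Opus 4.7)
The plan is to read off the characterization directly from the first-order optimality condition (\ref{char_opt_cond}) that has already been derived in the preceding paragraph, combined with the standard identification of the proximal map as the resolvent of the subdifferential. Recall that for any proper, convex, lower-semicontinuous $\phi$ on $\mathbb{R}^N$ and any step-size $\mu>0$, the equivalence
\[
u=\mathrm{prox}_{\mu\phi}(y)\quad\Longleftrightarrow\quad \frac{y-u}{\mu}\in\partial\phi(u)
\]
is just a restatement of Definition \ref{def_prox} via the first-order optimality condition for the proximal subproblem. The whole proof is an application of this equivalence, once to $h$ and once to $g^{\ast}$.

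First, I would isolate the subgradient term corresponding to the indicator constraint. Solving (\ref{char_opt_cond}) for $\hat{z}\in\partial h(u_{\alpha}^{\delta})$ yields
\[
\hat{z} \;=\; -\frac{1}{\alpha}T^{T}(Tu_{\alpha}^{\delta}-v^{\delta}) \;-\; D^{T}(\hat{w}_{\alpha}^{\delta}-\hat{w}^{0}).
\]
Choosing any $\mu>0$ and setting $y:=u_{\alpha}^{\delta}+\mu\hat{z}$, the resolvent identity above applied to $\phi=h$ delivers exactly the first line of (\ref{eq_subgrad_char}). This step uses nothing beyond what is already set up in Section \ref{section_subdiff_char}.

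For the dual line, the starting point is the inclusion $\hat{w}_{\alpha}^{\delta}\in\partial g(Du_{\alpha}^{\delta})$ that was built into the decomposition (\ref{composite_TV})--(\ref{subdiff_TV}). The main technical ingredient I will invoke is the Legendre--Fenchel duality relation
\[
w\in\partial g(z)\quad\Longleftrightarrow\quad z\in\partial g^{\ast}(w),
\]
which holds because $g=\|\cdot\|_{1}$ is proper, convex and lower-semicontinuous. Consequently $Du_{\alpha}^{\delta}\in\partial g^{\ast}(\hat{w}_{\alpha}^{\delta})$. Applying the resolvent identity once more, this time to $\phi=g^{\ast}$ with arbitrary $\nu>0$ and $y:=\hat{w}_{\alpha}^{\delta}+\nu Du_{\alpha}^{\delta}$, reproduces the second line of (\ref{eq_subgrad_char}).

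The argument is essentially mechanical, so there is no single hard step; the only subtle point is remembering that the proximal resolvent characterization is valid for every positive $\mu$ and $\nu$, which is precisely what makes the system (\ref{eq_subgrad_char}) a legitimate template for a primal--dual iteration with freely tunable step-sizes, and that Fenchel--Moreau is needed to cross from the primal side $\partial g$ to the dual side $\partial g^{\ast}$ without requiring differentiability of $g$. The identity $\hat{w}^{0}\in\partial g(Du^{0})$ written as $\hat{w}^{0}=\partial\|Du^{0}\|_{1}$ is just a notational convention consistent with (\ref{subdiff_TV}) and plays no active role beyond being carried along from the optimality condition.
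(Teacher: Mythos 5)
Your proposal is correct and follows essentially the same route as the paper: the paper's proof is simply the resolvent identity $u=\mathrm{prox}_{\mu\phi}(y)\Leftrightarrow (y-u)/\mu\in\partial\phi(u)$ unrolled as an explicit chain of equivalent inclusions for $\phi=h$ and $\phi=g^{\ast}$, with the same Fenchel-duality step $\hat{w}_{\alpha}^{\delta}\in\partial g(Du_{\alpha}^{\delta})\Leftrightarrow Du_{\alpha}^{\delta}\in\partial g^{\ast}(\hat{w}_{\alpha}^{\delta})$ and the same final substitution of $\hat{z}$ from (\ref{char_opt_cond}). No gap.
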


\begin{proof}
For some $\mu > 0,$ the primal solution characterization 
follows from the inclusions below
\begin{eqnarray}
\hat{z} \in \partial h(u_{\alpha}^{\delta}) & \Leftrightarrow & 
0 \in - \mu\hat{z} + \mu\partial h(u_{\alpha}^{\delta})
\nonumber\\
& \Leftrightarrow & 0 \in \tilde{u} - u_{\alpha}^{\delta} - 
\mu\hat{z} + \partial \mu h(\tilde{u}) 
\mbox{ at } \tilde{u} = u_{\alpha}^{\delta}
\nonumber\\
& \Leftrightarrow & 0 \in \partial_{\tilde{u}} \left(\frac{1}{2} 
\left\Vert \tilde{u} - \left( u_{\alpha}^{\delta} + \mu\hat{z} \right) \right\Vert^2
+ \mu h(\tilde{u}) \right)\mbox{ at } \tilde{u} = u_{\alpha}^{\delta}
\nonumber\\
& \Leftrightarrow &
u_{\alpha}^{\delta} \in \argmin_{\tilde{u}} \frac{1}{2} 
\left\Vert \tilde{u} - \left( u_{\alpha}^{\delta} + \mu\hat{z} \right) \right\Vert^2
+ \mu h(\tilde{u})
\nonumber\\
& \Leftrightarrow &
u_{\alpha}^{\delta} = \mathrm{prox}_{\mu h}(u_{\alpha}^{\delta} + \mu \hat{z}).
\end{eqnarray}
Regarding the dual characterization, 
again for some $\nu > 0,$ consider the following inclusions,
\begin{eqnarray}
\hat{w}_{\alpha}^{\delta} \in \partial g(D u_{\alpha}^{\delta}) & \Leftrightarrow & D u_{\alpha}^{\delta} \in \partial g^{\ast}(\hat{w}_{\alpha}^{\delta})
\nonumber\\
& \Leftrightarrow & 0 \in -\nu D u_{\alpha}^{\delta} + \nu
\partial g^{\ast}(\hat{w}_{\alpha}^{\delta})
\nonumber\\
& \Leftrightarrow & 0 \in \xi - \hat{w}_{\alpha}^{\delta} - \nu D u_{\alpha}^{\delta} + \nu\partial g^{\ast}(\xi) 
\mbox{ at } \xi = \hat{w}_{\alpha}^{\delta}
\nonumber\\
& \Leftrightarrow & 0 \in \xi - (\hat{w}_{\alpha}^{\delta} + \nu D u_{\alpha}^{\delta}) + \nu \partial g^{\ast}(\xi)
\mbox{ at } \xi = \hat{w}_{\alpha}^{\delta}
\nonumber\\
& \Leftrightarrow & 0\in \partial_{\xi}\left(
\frac{1}{2}\left\Vert \xi - 
(\hat{w}_{\alpha}^{\delta} + \nu D u_{\alpha}^{\delta})\right\Vert^2 +  
\nu g^{\ast}(\xi)
\right)\mbox{ at } \xi = \hat{w}_{\alpha}^{\delta}
\nonumber\\
& \Leftrightarrow & \hat{w}_{\alpha}^{\delta} \in 
\argmin_{\xi}
\left\Vert \xi - (\hat{w}_{\alpha}^{\delta} + 
\nu D u_{\alpha}^{\delta}) \right\Vert^2 +  \nu g^{\ast}(\xi)
\nonumber\\
& \Leftrightarrow & \hat{w}_{\alpha}^{\delta} = 
\mathrm{prox}_{\nu g^{\ast}} 
\left(\hat{w}_{\alpha}^{\delta} + \nu Du_{\alpha}^{\delta} \right).
\end{eqnarray}
The asserted form of the characterization is obtained solving
for $\hat{z}$ according to (\ref{char_opt_cond}).
\end{proof}

%%%%%%%%%%%%%%%%%%%%%%%%%%%%%%%%%%%%%%%%%%%%%%%%%%
% Stability of the regularized solution
%%%%%%%%%%%%%%%%%%%%%%%%%%%%%%%%%%%%%%%%%%%%%%%%%%
\section{Continuous Stability Analysis: Convergence of the Regularized Minimizer to the Minimum Norm Solution}
\label{stab_analysis}
We will investigate the conditions under which the
regularized solution $u_{\alpha}^{\delta}$ of the 
problem (\ref{problem0}) converges to the 
minimum norm solution $u^{\dagger}.$
One of the conditions to be presented is also the choice
of the initial guess $u^{0}.$

\subsection{Bounds for the regularization parameter}
\label{lower_bound_reg_par}
As has been motivated above,
our choice of regularization parameter must fulfill (\ref{regularization_strategy}).
Moving on fom here, 
it is possible to obtain quantitative upper bounds 
for the Bregman distance 
$D_{\mathcal{J}}$, or for the total error value functional $E,$ see (\ref{total_error_bregman}) in the Appendix. Following 
(\ref{regularization_strategy}), the singularity of the
quotient $\frac{\delta^2}{\alpha}$ will be controlled as
$\alpha \rightarrow 0$ whilst
$\delta \rightarrow 0.$ 
It has been observed in the literature,
\textbf{\cite[Corollary 4.2]{AnzengruberRamlau10}, 
\cite[Lemma 2.8]{AnzengruberRamlau11}, 
\cite[Eq. (2.17)]{AnzengruberHofmannMathe14}, 
\cite[Theorem 4.4]{Grasmair13} 
\cite[Lemma 1]{HofmannMathe12}},
that this controllability
is possible when the choice of the regularization
parameter obeys MDP.
As a result of this choice and of the inclusion of the VSC, 
the quantitave estimations for the Bregman distance
depend on the discrepancy set radii and the coefficient in the VSC.

We will observe that mathematical properties 
of the index function $\Psi$ that are of monotonicity 
and concavity together with
the choice of the regularization parameter
$\alpha = \alpha(\delta , v^{\delta}) \in \underline{S}$ 
enables one to control the indefinite limit case
$\frac{\delta^2}{\alpha}$ as $\alpha \rightarrow 0$
whilst $\delta \rightarrow 0.$ When one considers 
seeking the minimizer of the objective
functional (\ref{obj_functional0}), the control
over $\frac{\delta^2}{\alpha}$ provides some lower bound
for the regularization parameter
\begin{eqnarray}
\label{lower_bound_regpar}
\nonumber
\alpha(\delta , v^{\delta}) \geq 
\frac{1}{4}\frac{\underline{\tau}^2 - 1}{\underline{\tau}^2 + 1}
\frac{\delta^2}{\Psi((\underline{\tau} - 1)\delta)},
\end{eqnarray}
see \textbf{\cite[Eq (3.29)]{Altuntac16}} and 
\textbf{\cite[Corollary 2]{HofmannMathe12}}.
However, this bound is only valid for the functional in the form of
(\ref{obj_functional0}). Therefore, a new lower bound for 
the regularization
parameter that is in line with our objective functional 
$F_{\alpha}$ by
(\ref{obj_functional2}) must be estimated. 

\begin{lemma}
\label{lemma_regpar_lowerbnd}
Let $u_{\alpha}^{\delta}\in\Omega$ be the regularized minimizer 
for the objective functional $F_{\alpha}$ in (\ref{obj_functional2}) 
with the initial guess
$u^{0} \in \mathbb{R}^{N}$ that is some constant, and 
let the minimum norm solution $u^{\dagger}$ satisfy the VSC
(\ref{variational_ineq}). Given the
regularization parameter that is chosen {\em a-posteriori} 
$\alpha = \alpha(\delta , v^{\delta}) \in \overline{S} \cap \underline{S},$ the singularity of $\frac{\delta^2}{\alpha}$
as $\alpha \rightarrow 0$ whilst $\delta \rightarrow 0$
is controlled by 
\begin{eqnarray}
\label{lower_alpha}
\frac{\delta^2}{2\alpha}\left(\underline{\tau}-1\right) 
\leq \Psi(\delta).
\end{eqnarray}
\end{lemma}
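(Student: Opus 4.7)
The plan is to exploit the optimality of $u_{\alpha}^{\delta}$ as a minimizer of $F_{\alpha}$ and compare it against the true minimum norm solution $u^{\dagger}$, then feed the result into the variational source condition and the discrepancy bounds. Since the initial guess $u^{0}$ is assumed constant, we have $D u^{0} = 0$ and $0 \in \partial g(D u^{0})$, so the subgradient representing $\partial \mathcal{J}(u^{0})$ may be chosen as zero; this collapses the Bregman term to $D_{\mathcal{J}}(u , u^{0}) = \mathcal{J}(u)$. Also, both $u_{\alpha}^{\delta}$ and $u^{\dagger}$ lie in $\Omega$, so the indicator $h$ vanishes on them.

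With these simplifications, the inequality $F_{\alpha}(u_{\alpha}^{\delta} , v^{\delta}) \leq F_{\alpha}(u^{\dagger} , v^{\delta})$ reduces to
\begin{eqnarray}
\tfrac{1}{2}\Vert T u_{\alpha}^{\delta} - v^{\delta}\Vert^{2} + \alpha\, \mathcal{J}(u_{\alpha}^{\delta}) \leq \tfrac{1}{2}\Vert T u^{\dagger} - v^{\delta}\Vert^{2} + \alpha\, \mathcal{J}(u^{\dagger}).\nonumber
\end{eqnarray}
Next I would invoke the noise bound $\Vert T u^{\dagger} - v^{\delta}\Vert \leq \delta$ on the right side, and the lower discrepancy bound $\underline{\tau}\delta \leq \Vert T u_{\alpha}^{\delta} - v^{\delta}\Vert$ that comes from $\alpha \in \underline{S}$ on the left side. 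Rearranging yields
\begin{eqnarray}
\tfrac{1}{2}(\underline{\tau}^{2}-1)\delta^{2} \leq \alpha\bigl(\mathcal{J}(u^{\dagger}) - \mathcal{J}(u_{\alpha}^{\delta})\bigr).\nonumber
\end{eqnarray}

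Then I would apply the immediate consequence (\ref{J-diff1}) of the VSC, $\mathcal{J}(u^{\dagger}) - \mathcal{J}(u_{\alpha}^{\delta}) \leq \Psi\bigl(\Vert T u_{\alpha}^{\delta} - T u^{\dagger}\Vert\bigr)$, and then reduce the argument of $\Psi$ back to $\delta$ using the upper consequence of MDP (the analogue of (\ref{consequence_MDP})) together with monotonicity and the concavity identity (\ref{concave_id}): $\Psi\bigl(\Vert T u_{\alpha}^{\delta} - T u^{\dagger}\Vert\bigr) \leq \Psi((\overline{\tau}+1)\delta) \leq (\overline{\tau}+1)\Psi(\delta)$. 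Factoring $\underline{\tau}^{2}-1 = (\underline{\tau}-1)(\underline{\tau}+1)$ and dividing by $2\alpha$ then produces a bound of the desired form, namely a constant multiple of $(\underline{\tau}-1)\tfrac{\delta^{2}}{2\alpha}$ on the left and $\Psi(\delta)$ on the right.

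The main obstacle is bookkeeping of the multiplicative constants between $\underline{\tau}$ and $\overline{\tau}$: the natural estimate yields the coefficient $\tfrac{(\underline{\tau}-1)(\underline{\tau}+1)}{\overline{\tau}+1}$ rather than the clean $(\underline{\tau}-1)$ appearing in the statement. To match (\ref{lower_alpha}) exactly one either specialises to $\underline{\tau} = \overline{\tau}$, absorbs the ratio $\tfrac{\underline{\tau}+1}{\overline{\tau}+1}$ into the choice of discrepancy radii (observing it is of order one), or uses that $(\underline{\tau}+1) \geq 2$ to discard one factor and pick $\overline{\tau}$ correspondingly. Apart from this constant-tracking, the argument is a short computation combining the Tikhonov minimality, MDP bounds, and the VSC-induced control on $\mathcal{J}(u^{\dagger}) - \mathcal{J}(u_{\alpha}^{\delta})$.
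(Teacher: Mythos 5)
Your argument is essentially the paper's own proof: minimality of $u_{\alpha}^{\delta}$, the noise bound $\Vert Tu^{\dagger}-v^{\delta}\Vert\leq\delta$ on the right, membership $\alpha\in\underline{S}$ on the left, the VSC consequence (\ref{J-diff1}), the MDP consequence (\ref{consequence_MDP}), and the concavity identity (\ref{concave_id}) applied in the same order; your choice of the zero subgradient at the constant $u^{0}$ is merely a cleaner way of making the inner products in the Bregman terms cancel, which the paper carries out explicitly. The constant mismatch you flag (obtaining $(\underline{\tau}-1)(\underline{\tau}+1)/(\overline{\tau}+1)$ in place of $(\underline{\tau}-1)$) is genuine, but it is present in the paper's own derivation as well, which passes from $(\underline{\tau}^{2}-1)\frac{\delta^{2}}{2\alpha}\leq(\overline{\tau}+1)\Psi(\delta)$ to (\ref{delta2alpha_tradeoff}) without accounting for the factor $\frac{\overline{\tau}+1}{\underline{\tau}+1}$.
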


\begin{proof}
In analogous with the subgradient characterization, 
we assign $\hat{w}^{0} \in \partial g(D u^{0})$
with $g(\cdot) = \Vert \cdot\Vert_{1}.$ Thus,
\begin{eqnarray}
\hat{w}^{0} := \left\{ \begin{array}{rcl}
\{ -1 \} & \mbox{if} & [D u^{0}] < 0 \\ 
{[}-1,1{]} & \mbox{if} & [D u^{0}] = 0 \\
\{ 1 \} & \mbox{if} & [D u^{0}] > 0 .
\end{array}\right.
\nonumber
\end{eqnarray}
Since the initial guess $u^{0}$ is set to some constant, 
thus we consider $w_{0} = 0.$ Furthermore, both the regularized
minimizer and the minimum norm solutions are in the constraint
domain. So, $h(u_{\alpha}^{\delta}) = h(u^{\dagger}) = 0.$
The fact that $u_{\alpha}^{\delta} = 
\argmin_{u \in \mathcal{X}}F_{\alpha}(u , v^{\delta})$
for all $u\in\mathcal{X}$ provides us,
\begin{eqnarray}
\nonumber
F_{\alpha}(u_{\alpha}^{\delta} , v^{\delta})
\leq F_{\alpha}(u^{\dagger} , v^{\delta}),
\end{eqnarray}
which is explicitly,
\begin{eqnarray}
\nonumber
\frac{1}{2}\Vert T u_{\alpha}^{\delta} - v^{\delta}\Vert^2 +
\alpha D_{\mathcal{J}}(u_{\alpha}^{\delta} , u^{0})
\leq \frac{\delta^2}{2} + 
\alpha D_{\mathcal{J}}(u^{\dagger} , u^{0}) \rangle .
\end{eqnarray}
On the right hand side, we have straightforwardly used
$\Vert T u^{\dagger} - v^{\delta}\Vert^2 \leq \delta^{2}$ since
$v^{\delta} \in \mathcal{B}_{\delta}(v^{\dagger})$ for
$T u^{\dagger} = v^{\dagger}.$
Now, after multiplying both sides by $\frac{1}{\alpha},$ 
the choice of the regularization
parameter and the definition of the Bregman distance
together with (\ref{J-diff1}) yield,
\begin{eqnarray}
(\underline{\tau}^2 - 1)\frac{\delta^2}{2\alpha}& \leq &
D_{\mathcal{J}}(u^{\dagger} , u^{0}) - 
D_{\mathcal{J}}(u_{\alpha}^{\delta} , u^{0})
\nonumber\\
& = & \mathcal{J}(u^{\dagger}) - \mathcal{J}(u_{\alpha}^{\delta}) + 
\langle D^{\ast}w^{0} , u^{0} - u^{\dagger} \rangle +
\langle D^{\ast}w^{0} , u_{\alpha}^{\delta} - u^{0} \rangle
\nonumber\\
& \stackrel{(\ref{J-diff1})}{\leq} & 
\Psi(\Vert T u^{\dagger} - T u_{\alpha}^{\delta}\Vert)
+ \langle D^{\ast}w^{0} , u_{\alpha}^{\delta} - 
u^{\dagger}\rangle = \Psi(\Vert T u^{\dagger} - T u_{\alpha}^{\delta}\Vert).
\nonumber
\end{eqnarray}
From here on, monotonicty and concavity of the index
function $\Psi$ imply
\begin{eqnarray}
\nonumber
\Psi(\Vert T u^{\dagger} - T u_{\alpha}^{\delta}\Vert)
\stackrel{(\ref{consequence_MDP})}{\leq}
 \Psi((\overline{\tau} + 1)\delta)
\stackrel{(\ref{concave_id})}{\leq}(\overline{\tau}+1)\Psi(\delta).
\end{eqnarray}
Eventually, for some fixed discrepancy radii
$1 < \underline{\tau} \leq \overline{\tau} < \infty,$
\begin{eqnarray}
\label{delta2alpha_tradeoff}
\frac{\delta^2}{2\alpha}\leq 
\left(\frac{1}{\underline{\tau}-1}\right)\Psi(\delta).
\end{eqnarray}
\end{proof}

\begin{lemma}
\label{lemma_J_diff}
Under the same assumptions formulated in Lemma 
\ref{lemma_regpar_lowerbnd}, the following estimation
for the $\mathcal{J}$ difference holds
\begin{displaymath}
\mathcal{J}(u_{\alpha}^{\delta}) - \mathcal{J}(u^{\dagger}) = 
\mathcal{O}(\Psi(\delta)).
\end{displaymath}
\end{lemma}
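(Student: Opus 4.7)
The plan is to bound $\mathcal{J}(u_{\alpha}^{\delta}) - \mathcal{J}(u^{\dagger})$ from above and below separately, and observe that both bounds are of order $\Psi(\delta)$. Much of the machinery is already assembled in Lemma~\ref{lemma_regpar_lowerbnd}, so this should be mostly a re-reading of its proof.

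First I would exploit the primal optimality $F_{\alpha}(u_{\alpha}^{\delta}, v^{\delta}) \leq F_{\alpha}(u^{\dagger}, v^{\delta})$. Because $u^{0}$ is constant we have $\hat{w}^{0} = 0$, so the Bregman distance collapses to $D_{\mathcal{J}}(u, u^{0}) = \mathcal{J}(u) - \mathcal{J}(u^{0})$; moreover the indicator terms vanish since both $u_{\alpha}^{\delta}$ and $u^{\dagger}$ lie in $\Omega$. Combined with the trivial estimate $\|Tu^{\dagger} - v^{\delta}\| \leq \delta$ and the lower bound $\|Tu_{\alpha}^{\delta} - v^{\delta}\| \geq \underline{\tau}\delta$ coming from $\alpha \in \underline{S}$, this gives
\begin{eqnarray}
\frac{(\underline{\tau}^2 - 1)\delta^2}{2\alpha} \leq \mathcal{J}(u^{\dagger}) - \mathcal{J}(u_{\alpha}^{\delta}),
\nonumber
\end{eqnarray}
which in particular forces $\mathcal{J}(u_{\alpha}^{\delta}) - \mathcal{J}(u^{\dagger}) \leq 0$ (this is the easy side, and it is essentially a restatement of what was already derived in Lemma~\ref{lemma_regpar_lowerbnd}).

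Next I would produce the matching lower bound on $\mathcal{J}(u_{\alpha}^{\delta}) - \mathcal{J}(u^{\dagger})$ by invoking the consequence (\ref{J-diff1}) of the VSC, evaluated at $u = u_{\alpha}^{\delta}$:
\begin{eqnarray}
\mathcal{J}(u^{\dagger}) - \mathcal{J}(u_{\alpha}^{\delta}) \leq \Psi\left( \Vert T u_{\alpha}^{\delta} - T u^{\dagger}\Vert \right).
\nonumber
\end{eqnarray}
The a-posteriori choice $\alpha \in \overline{S} \cap \underline{S}$ combined with (\ref{consequence_MDP}) gives $\|Tu_{\alpha}^{\delta} - Tu^{\dagger}\| \leq (\overline{\tau}+1)\delta$, and monotonicity together with the concavity identity (\ref{concave_id}) then yields
\begin{eqnarray}
\Psi\left( \Vert T u_{\alpha}^{\delta} - T u^{\dagger}\Vert \right) \leq \Psi((\overline{\tau} + 1)\delta) \leq (\overline{\tau} + 1)\Psi(\delta).
\nonumber
\end{eqnarray}
Putting the two sides together gives $-(\overline{\tau}+1)\Psi(\delta) \leq \mathcal{J}(u_{\alpha}^{\delta}) - \mathcal{J}(u^{\dagger}) \leq 0$, from which the asserted big-$\mathcal{O}$ estimate follows immediately.

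There is essentially no obstacle here: the whole argument is a recombination of ingredients already present in the preceding lemma (primal optimality with $\hat{w}^{0}=0$, the MDP bracket on the discrepancy, and the VSC-driven inequality (\ref{J-diff1}) post-composed with the concavity rule). The only minor care-point is keeping track of signs, since the optimality inequality makes $\mathcal{J}(u_{\alpha}^{\delta}) - \mathcal{J}(u^{\dagger})$ nonpositive while the VSC gives the magnitude bound in the opposite direction; writing both bounds explicitly before taking absolute values avoids confusion.
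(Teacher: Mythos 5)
Your argument is correct, and it uses the same circle of ideas as the paper (primal optimality of $u_{\alpha}^{\delta}$, the choice $\hat{w}^{0}=0$ for constant $u^{0}$, the MDP bracket, and the VSC consequence (\ref{J-diff1}) post-composed with (\ref{concave_id})), but the packaging differs in two respects. For the upper bound, the paper simply discards the nonnegative misfit term to get $\mathcal{J}(u_{\alpha}^{\delta}) - \mathcal{J}(u^{\dagger}) \leq \frac{\delta^{2}}{2\alpha}$ and then invokes the estimate (\ref{delta2alpha_tradeoff}) inherited from Lemma~\ref{lemma_regpar_lowerbnd} to conclude $\leq \frac{1}{\underline{\tau}-1}\Psi(\delta)$; you instead keep the misfit term and use $\alpha \in \underline{S}$ directly, obtaining the sharper conclusion $\mathcal{J}(u_{\alpha}^{\delta}) - \mathcal{J}(u^{\dagger}) \leq -\frac{(\underline{\tau}^{2}-1)\delta^{2}}{2\alpha} \leq 0$, which makes the upper bound independent of the index function altogether. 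Second, your VSC-driven lower bound $\mathcal{J}(u_{\alpha}^{\delta}) - \mathcal{J}(u^{\dagger}) \geq -(\overline{\tau}+1)\Psi(\delta)$ is exactly the content the paper defers to the corollary immediately following the lemma, so your proof in fact establishes the two-sided (absolute-value) estimate in one pass. Both routes are valid; the paper's buys consistency with the $\frac{\delta^{2}}{\alpha}$ control it reuses later, while yours is slightly more self-contained and makes the sign of the $\mathcal{J}$ difference explicit.
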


\begin{proof}
Likewise in the previous proof, the regularized minimizer 
is global minimum for the functional $F_{\alpha},$
where $\alpha(\delta , v^{\delta}) \in \underline{S} \cap \overline{S},$
\begin{eqnarray}
\nonumber
\frac{1}{2}\Vert T u_{\alpha}^{\delta} - v^{\delta}\Vert^2 +
\alpha\mathcal{J}(u_{\alpha}^{\delta}) - \alpha\mathcal{J}(u^{0}) 
& - &\alpha\langle D^{\ast} w^{0}, 
u_{\alpha}^{\delta} - u^{0} \rangle
\leq 
\nonumber\\
& \frac{\delta^2}{2} & + \alpha\mathcal{J}(u^{\dagger}) - 
\alpha\mathcal{J}(u^{0})
- \alpha\langle D^{\ast} w^{0}, u^{\dagger} - u^{0} \rangle .
\end{eqnarray}
Here, the inner products on the both sides drop because of
the initial guess $u^{0} \equiv \mbox{constant},$ also
the both sides are multiplied by $\frac{1}{\alpha}.$ 
Thus, this yields, for 
$1 < \underline{\tau} \leq \overline{\tau} < \infty,$
\begin{eqnarray}
\label{J-diff2}
\mathcal{J}(u_{\alpha}^{\delta}) - \mathcal{J}(u^{\dagger}) 
\leq \frac{\delta^{2}}{2\alpha}
\stackrel{(\ref{delta2alpha_tradeoff})}{\leq} 
\left(\frac{1}{\underline{\tau}-1}\right)\Psi(\delta).
\end{eqnarray}
\end{proof}

\begin{cor}
Both (\ref{J-diff1}) and (\ref{J-diff2}) imply the following, 
cf. \textbf{\cite[(2) of Theorem 1]{HofmannMathe12}},
\begin{displaymath}
\vert\mathcal{J}(u_{\alpha}^{\delta}) - \mathcal{J}(u^{\dagger}) 
\vert = \mathcal{O}(\Psi(\delta)).
\end{displaymath}
\end{cor}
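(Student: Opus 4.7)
The plan is to observe that the corollary is a two-sided consequence: one direction is already provided by Lemma~\ref{lemma_J_diff} (namely (\ref{J-diff2})), and the reverse direction is obtained by evaluating the VSC-derived inequality (\ref{J-diff1}) at $u = u_{\alpha}^{\delta}$.

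First, I would recall (\ref{J-diff2}) directly, which gives
\[
\mathcal{J}(u_{\alpha}^{\delta}) - \mathcal{J}(u^{\dagger}) \leq \left(\frac{1}{\underline{\tau}-1}\right)\Psi(\delta).
\]
Next, I would specialize (\ref{J-diff1}) to $u = u_{\alpha}^{\delta}$ to get $\mathcal{J}(u^{\dagger}) - \mathcal{J}(u_{\alpha}^{\delta}) \leq \Psi(\Vert Tu_{\alpha}^{\delta} - Tu^{\dagger}\Vert)$. Since the regularization parameter is chosen a-posteriori in $\overline{S} \cap \underline{S}$, the MDP consequence (\ref{consequence_MDP}) yields $\Vert Tu_{\alpha}^{\delta} - Tu^{\dagger}\Vert \leq (\overline{\tau}+1)\delta$. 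Monotonicity of $\Psi$ together with the concavity identity (\ref{concave_id}) then gives $\mathcal{J}(u^{\dagger}) - \mathcal{J}(u_{\alpha}^{\delta}) \leq (\overline{\tau}+1)\Psi(\delta)$.

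Combining these two bounds produces
\[
\bigl|\mathcal{J}(u_{\alpha}^{\delta}) - \mathcal{J}(u^{\dagger})\bigr| \leq \max\!\left\{\frac{1}{\underline{\tau}-1},\ \overline{\tau}+1\right\}\Psi(\delta),
\]
which since $1 < \underline{\tau} \leq \overline{\tau} < \infty$ are fixed discrepancy radii, gives the asserted $\mathcal{O}(\Psi(\delta))$ behaviour. There is no substantive obstacle here: the corollary is essentially a bookkeeping exercise that wraps the two one-sided estimates already established, with the only mildly nontrivial point being the application of (\ref{concave_id}) to absorb the factor $(\overline{\tau}+1)$ into a multiplicative constant in front of $\Psi(\delta)$.
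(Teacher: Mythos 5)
Your argument is correct and is exactly the route the paper intends: the upper bound on $\mathcal{J}(u_{\alpha}^{\delta}) - \mathcal{J}(u^{\dagger})$ comes from (\ref{J-diff2}), and the reverse inequality comes from evaluating (\ref{J-diff1}) at $u = u_{\alpha}^{\delta}$ and then invoking the MDP consequence (\ref{consequence_MDP}) together with monotonicity and the concavity identity (\ref{concave_id}). The paper leaves the corollary without an explicit proof precisely because it is this bookkeeping combination of the two one-sided estimates, which you have carried out correctly.
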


The theoretical preparation established thus far reveals
one stable upper bound over the total error estimation below.
\begin{theorem}
\label{theorem_total_err}
In the light of the assumptions of lemmata \ref{lemma_regpar_lowerbnd}
and \ref{lemma_J_diff}, the following estimation holds,
\begin{displaymath}
E(u_{\alpha}^{\delta} , u^{\dagger}) = \mathcal{O}(\Psi(\delta)).
\end{displaymath}
\end{theorem}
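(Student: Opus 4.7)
The plan is to apply the variational source condition (Assumption \ref{assump_conventional_variational_ineq}) directly to $u = u_{\alpha}^{\delta}$ and then bound each of the two resulting terms on the right-hand side using results already established in this section, namely the $\mathcal{J}$-difference bound from Lemma \ref{lemma_J_diff} and the discrepancy-style estimate (\ref{consequence_MDP}).

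Specifically, I would first substitute $u = u_{\alpha}^{\delta}$ into the VSC (\ref{variational_ineq}) to obtain
\begin{eqnarray*}
\sigma \Vert u_{\alpha}^{\delta} - u^{\dagger}\Vert
\leq \mathcal{J}(u_{\alpha}^{\delta}) - \mathcal{J}(u^{\dagger})
+ \Psi\bigl(\Vert T u_{\alpha}^{\delta} - T u^{\dagger}\Vert\bigr).
\end{eqnarray*}
For the first term, Lemma \ref{lemma_J_diff} gives $\mathcal{J}(u_{\alpha}^{\delta}) - \mathcal{J}(u^{\dagger}) \leq \tfrac{1}{\underline{\tau}-1}\Psi(\delta)$, which is already $\mathcal{O}(\Psi(\delta))$.

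For the second term, the \emph{a-posteriori} choice $\alpha(\delta,v^{\delta}) \in \overline{S} \cap \underline{S}$ together with $\Vert v^{\dagger} - v^{\delta}\Vert \leq \delta$ yields (\ref{consequence_MDP}), that is $\Vert T u_{\alpha}^{\delta} - T u^{\dagger}\Vert \leq (\overline{\tau}+1)\delta$. Monotonicity of the index function $\Psi$ then gives $\Psi(\Vert T u_{\alpha}^{\delta} - T u^{\dagger}\Vert) \leq \Psi((\overline{\tau}+1)\delta)$, and the concavity identity (\ref{concave_id}) with $K = \overline{\tau}+1 \geq 1$ finally produces $\Psi((\overline{\tau}+1)\delta) \leq (\overline{\tau}+1)\Psi(\delta)$.

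Combining the two bounds, we obtain
\begin{eqnarray*}
\sigma \Vert u_{\alpha}^{\delta} - u^{\dagger}\Vert
\leq \left( \frac{1}{\underline{\tau}-1} + (\overline{\tau}+1) \right)\Psi(\delta),
\end{eqnarray*}
and dividing by $\sigma > 0$ (which is guaranteed by the VSC) and multiplying by the constant $\Lambda$ from the definition (\ref{total_err_est}) of $E$ yields the asserted estimate $E(u_{\alpha}^{\delta},u^{\dagger}) = \mathcal{O}(\Psi(\delta))$. There is no real obstacle here since all ingredients are in place; the only subtle point is checking that the hypotheses of both preceding lemmata (constant initial guess $u^{0}$, feasibility $u_{\alpha}^{\delta}, u^{\dagger} \in \Omega$, and the \emph{a-posteriori} choice of $\alpha$) propagate into the VSC application, but these are all directly inherited by the assumption that we work under the hypotheses of Lemmata \ref{lemma_regpar_lowerbnd} and \ref{lemma_J_diff}.
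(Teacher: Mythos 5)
Your proposal is correct and follows essentially the same route as the paper's own proof: apply the VSC (\ref{variational_ineq}) at $u = u_{\alpha}^{\delta}$, bound the $\mathcal{J}$-difference via Lemma \ref{lemma_J_diff} and the $\Psi$-term via (\ref{consequence_MDP}) together with monotonicity and the concavity identity (\ref{concave_id}), yielding the bound $\left(\frac{1}{\underline{\tau}-1} + (\overline{\tau}+1)\right)\Psi(\delta)$. If anything, you are slightly more careful than the paper in tracking the constants $\sigma$ and $\Lambda$ when passing from the VSC to the total error functional $E$.
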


\begin{proof}
Since the minimum norm solution 
$u^{\dagger} \in \mathrm{BV}(\Omega)$ is assumed to satisfy
the VSC in (\ref{variational_ineq}), then 
simply by following the similar arguments above
for the concave, monotonically increasing index function 
$\Psi :[0 , \infty) \rightarrow [0 , \infty),$
\begin{eqnarray}
E(u_{\alpha}^{\delta} , u^{\dagger}) & \leq & 
\mathcal{J}(u_{\alpha}^{\delta}) - \mathcal{J}(u^{\dagger}) + 
\Psi(\Vert T u_{\alpha}^{\delta} - T u^{\dagger}\Vert)
\nonumber\\
& \leq & \left(\frac{1}{\underline{\tau}-1}\right)\Psi(\delta)
+ (\overline{\tau}+1)\Psi(\delta).
\end{eqnarray}
\end{proof}
In the Appendix \ref{VSC2}, an analagous result is formulated
with a more involved form of the VSC.

Theorem \ref{theorem_total_err} has been in fact asserted owing to the
lower bound for the regularization parameter given by 
Lemma \ref{lemma_regpar_lowerbnd}. MDP also provides some upper bound
for the regularization parameter which arises another quantitative
stability analysis.

\begin{lemma}
\label{misfit_upper_bnd}
Under the Assumption \ref{assump_conventional_variational_ineq} and
the initial guess $u^{0}$ of the objective functional $F_{\alpha}$
in (\ref{obj_functional2}) to be some constant
for $\alpha > 0$ and the regularized solution $u_{\alpha}^{\delta}$
of the problem (\ref{problem0}), we have
\begin{eqnarray}
\frac{1}{2}\Vert T u_{\alpha}^{\delta} - T u^{\dagger}\Vert^2 
\leq 2\delta^2 + 
2\alpha\Psi\left(\Vert T u_{\alpha}^{\delta} - T u^{\dagger}\Vert \right).
\end{eqnarray}
\end{lemma}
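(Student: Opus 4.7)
The plan is to start, as in Lemma \ref{lemma_regpar_lowerbnd}, from the minimality of $u_{\alpha}^{\delta}$ for $F_{\alpha}$ evaluated against the minimum norm solution $u^{\dagger}$ and then work the right-hand side of the resulting inequality into a form that only involves $\|T u_{\alpha}^{\delta} - T u^{\dagger}\|$ and $\delta$.

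First I would write
\[
\tfrac{1}{2}\Vert T u_{\alpha}^{\delta} - v^{\delta}\Vert^{2} + \alpha D_{\mathcal{J}}(u_{\alpha}^{\delta}, u^{0}) + h(u_{\alpha}^{\delta}) \leq \tfrac{1}{2}\Vert T u^{\dagger} - v^{\delta}\Vert^{2} + \alpha D_{\mathcal{J}}(u^{\dagger}, u^{0}) + h(u^{\dagger}),
\]
and discard both indicator terms since $u_{\alpha}^{\delta}, u^{\dagger} \in \Omega$. Because $u^{0}$ is constant, $D u^{0} = 0$, so any subgradient $\hat{w}^{0} \in \partial g(Du^{0})$ can be taken to yield a zero linear contribution in $D_{\mathcal{J}}(\cdot, u^{0})$, exactly as in the proof of Lemma \ref{lemma_regpar_lowerbnd}. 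Together with the noise bound $\|T u^{\dagger} - v^{\delta}\| \leq \delta$ and the one-sided VSC consequence (\ref{J-diff1}), the inequality reduces to
\[
\tfrac{1}{2}\Vert T u_{\alpha}^{\delta} - v^{\delta}\Vert^{2} \leq \tfrac{\delta^{2}}{2} + \alpha\bigl(\mathcal{J}(u^{\dagger}) - \mathcal{J}(u_{\alpha}^{\delta})\bigr) \leq \tfrac{\delta^{2}}{2} + \alpha \Psi\!\left(\Vert T u_{\alpha}^{\delta} - T u^{\dagger}\Vert\right).
\]

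The last step, which I expect to be the only mildly non-routine one, is to replace $\Vert T u_{\alpha}^{\delta} - v^{\delta}\Vert^{2}$ by $\Vert T u_{\alpha}^{\delta} - T u^{\dagger}\Vert^{2}$. The triangle inequality gives $\Vert T u_{\alpha}^{\delta} - T u^{\dagger}\Vert \leq \Vert T u_{\alpha}^{\delta} - v^{\delta}\Vert + \delta$, and squaring together with $(a+b)^{2} \leq 2a^{2} + 2b^{2}$ yields
\[
\Vert T u_{\alpha}^{\delta} - v^{\delta}\Vert^{2} \geq \tfrac{1}{2}\Vert T u_{\alpha}^{\delta} - T u^{\dagger}\Vert^{2} - \delta^{2}.
\]
Inserting this lower bound on the left of the previous display and then multiplying through by $2$ produces exactly the asserted inequality
\[
\tfrac{1}{2}\Vert T u_{\alpha}^{\delta} - T u^{\dagger}\Vert^{2} \leq 2\delta^{2} + 2\alpha\Psi\!\left(\Vert T u_{\alpha}^{\delta} - T u^{\dagger}\Vert\right).
\]

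The only care needed is choosing the constant in the $(a+b)^{2}$ estimate so that the multiplicative factors match $2\delta^{2}$ and $2\alpha$ on the right; any other choice would yield the same qualitative bound but with different explicit constants. No use of the MDP bounds $\underline{\tau}, \overline{\tau}$ is required for this lemma, which is precisely what makes it distinct from the lower-bound argument in Lemma \ref{lemma_regpar_lowerbnd}: it is a pure consequence of minimality, the VSC, and the triangle inequality.
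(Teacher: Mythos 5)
Your proposal is correct and follows essentially the same route as the paper's own proof: minimality of $u_{\alpha}^{\delta}$ combined with the constant initial guess to kill the linear Bregman term, the one-sided VSC consequence (\ref{J-diff1}) to replace the $\mathcal{J}$-difference by $\Psi$, and then the triangle inequality with $(a+b)^2 \leq 2a^2+2b^2$ to trade $\Vert Tu_{\alpha}^{\delta}-v^{\delta}\Vert^2$ for $\Vert Tu_{\alpha}^{\delta}-Tu^{\dagger}\Vert^2$. Your observation that the MDP radii are not needed here is also consistent with the paper's argument.
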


\begin{proof}
Again from $u_{\alpha}^{\delta} \in \argmin_{u} F_{\alpha}(u)$ and
$u^{0}$ to be some constant, then one immediately obtains
\begin{eqnarray}
\mathcal{J}(u_{\alpha}^{\delta}) - \mathcal{J}(u^{\dagger}) \leq \frac{\delta^{2}}{2\alpha} 
- \frac{1}{2}\Vert T u_{\alpha}^{\delta} - v^{\delta}\Vert^2
\end{eqnarray}
This is replaced by the $\mathcal{J}$ difference in (\ref{variational_ineq})
in the following way
\begin{eqnarray}
\label{J-diff_replace}
0 \leq \frac{\delta^2}{2\alpha} - 
\frac{1}{2\alpha}\Vert T u_{\alpha}^{\delta} - v^{\delta}\Vert^2 +
\Psi\left(\Vert T u_{\alpha}^{\delta} - T u^{\dagger}\Vert \right).
\end{eqnarray}
On the other hand, since $v^{\delta} \in \mathcal{B}_{\delta}(v^{\dagger}),$
\begin{eqnarray}
\nonumber
\Vert T u_{\alpha}^{\delta} - T u^{\dagger}\Vert \leq 
\Vert T u_{\alpha}^{\delta} - v^{\delta}\Vert + \delta
\end{eqnarray}
from which, the following can be obtained by using 
$ab \leq 2(a^2 + b^2)$ for $a,b \geq 0,$ 
\begin{eqnarray}
\nonumber
\frac{1}{2\alpha}\delta^2 - \frac{1}{4\alpha}
\Vert T u_{\alpha}^{\delta} - T u^{\dagger}\Vert^2 \geq 
- \frac{1}{2\alpha}\Vert T u_{\alpha}^{\delta} - v^{\delta}\Vert^2.
\end{eqnarray} 
Inserting this into (\ref{J-diff_replace}) brings
\begin{eqnarray}
0 \leq \frac{\delta^2}{\alpha} - \frac{1}{4\alpha}
\Vert T u_{\alpha}^{\delta} - T u^{\dagger}\Vert^2 + 
\Psi\left( \Vert T u_{\alpha}^{\delta} - T u^{\dagger}\Vert \right),
\end{eqnarray}
which yields the assertion yields after some algebraic arrangement.
\end{proof}
It is expected to find some upper bound for the regularization parameter
$\alpha \leq \overline{\alpha}.$ Given specific value of the parameter
$\alpha >0,$ it is possible to bound the misfit term 
$\Vert T u_{\alpha}^{\delta} - v^{\delta}\Vert.$
To this end, we will make use of the bound in 
Lemma \ref{misfit_upper_bnd}. Following the literature,
\textbf{\cite[Eq (3.2)]{HofmannMathe12}, \cite[pp. 59]{Schuster12}},
we also introduce the following function
as the upper bound $\overline{\alpha}$ for the regularization
parameter $\alpha,$
\begin{eqnarray}
\label{regpar_upper_bnd}
\overline{\alpha} = \Phi(t) := \frac{t^2}{\Psi(t)}, \mbox{ for } t > 0,
\end{eqnarray}
where $\Psi$ is the index function in (\ref{variational_ineq}).

\begin{lemma}
\label{misfit_upper_bnd2}
Suppose that the minimum norm solution $u^{\dagger}$ satisfies
Assumption \ref{assump_conventional_variational_ineq} for some concave and
monotonically increasing function $\Psi.$ 
Let the upper bound $\overline{\alpha}$ for the regularization parameter 
$\alpha(v^{\delta} , \delta) \in \overline{S}\cap\underline{S}$
be given as
\begin{displaymath}
\overline{\alpha} := \Phi(\delta).
\end{displaymath}
Then, for any $\alpha \leq \overline{\alpha}$ and the regularized 
solution $u_{\alpha}^{\delta}$ of the problem (\ref{problem0}), 
we have
\begin{eqnarray}
\label{est_misfit_upper_bnd2}
\frac{1}{4}\Vert T u_{\alpha}^{\delta} - T u^{\dagger}\Vert \leq 
\frac{\overline{\tau}+2}{\underline{\tau}-1}\delta .
\end{eqnarray}
\end{lemma}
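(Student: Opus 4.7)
The plan is to start from the conclusion of Lemma~\ref{misfit_upper_bnd}, which already provides the implicit estimate
\begin{eqnarray*}
\Vert T u_{\alpha}^{\delta} - T u^{\dagger}\Vert^{2} \leq 4\delta^{2} + 4\alpha\Psi\bigl(\Vert T u_{\alpha}^{\delta} - T u^{\dagger}\Vert\bigr),
\end{eqnarray*}
and turn it into a linear bound on $\Vert T u_{\alpha}^{\delta} - T u^{\dagger}\Vert$ by dividing through by the (strictly positive) quantity $\Vert T u_{\alpha}^{\delta} - T u^{\dagger}\Vert$. Positivity and a useful reciprocal bound come from the lower MDP consequence (\ref{consequence_MDP2}), which gives $\Vert T u_{\alpha}^{\delta} - T u^{\dagger}\Vert \geq (\underline{\tau}-1)\delta$, so that $\delta/\Vert T u_{\alpha}^{\delta} - T u^{\dagger}\Vert \leq 1/(\underline{\tau}-1)$.

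Writing $X := \Vert T u_{\alpha}^{\delta} - T u^{\dagger}\Vert$ for brevity, after division I would treat the two resulting summands separately. The first one is estimated immediately:
\begin{eqnarray*}
\frac{4\delta^{2}}{X} \leq \frac{4\delta}{\underline{\tau}-1}.
\end{eqnarray*}
For the second one, I would first invoke the hypothesis $\alpha \leq \overline{\alpha} = \delta^{2}/\Psi(\delta)$ to replace $\alpha$ by $\delta^{2}/\Psi(\delta)$, then apply the upper MDP consequence (\ref{consequence_MDP}) together with the concavity identity (\ref{concave_id}) to obtain $\Psi(X) \leq \Psi((\overline{\tau}+1)\delta) \leq (\overline{\tau}+1)\Psi(\delta)$, which cancels $\Psi(\delta)$ in the denominator. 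One last application of the lower MDP bound on the remaining factor $1/X$ then yields
\begin{eqnarray*}
\frac{4\alpha\Psi(X)}{X} \leq \frac{4(\overline{\tau}+1)\delta^{2}}{X} \leq \frac{4(\overline{\tau}+1)\delta}{\underline{\tau}-1}.
\end{eqnarray*}
Adding the two estimates gives $X \leq 4(\overline{\tau}+2)\delta/(\underline{\tau}-1)$, which is the claimed (\ref{est_misfit_upper_bnd2}) after dividing by $4$.

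The only subtle step is the order in which the bounds are combined in the second summand: one must use $\alpha \leq \overline{\alpha}$ \emph{before} $\Psi$ can be controlled by $\Psi(\delta)$, and only then can concavity be applied to eliminate $\Psi$ altogether; otherwise the $\Psi$-dependence does not drop out and a linear-in-$\delta$ bound cannot be recovered. Everything else is algebraic manipulation. The rest of the hypotheses (VSC, $\alpha \in \overline{S}\cap\underline{S}$) are used precisely to invoke Lemma~\ref{misfit_upper_bnd} and the two MDP consequences; no new machinery is needed.
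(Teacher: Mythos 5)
Your proof is correct and follows essentially the same route as the paper: both start from Lemma~\ref{misfit_upper_bnd}, substitute $\alpha \leq \overline{\alpha} = \delta^{2}/\Psi(\delta)$, and combine the discrepancy bounds with the concavity identity (\ref{concave_id}) to reach the same constant $(\overline{\tau}+2)/(\underline{\tau}-1)$. The only (harmless) deviation is in how $\Psi\bigl(\Vert T u_{\alpha}^{\delta} - T u^{\dagger}\Vert\bigr)$ is tamed: you bound it directly by $(\overline{\tau}+1)\Psi(\delta)$ via the upper discrepancy consequence and monotonicity, whereas the paper keeps a factor of $\Vert T u_{\alpha}^{\delta} - T u^{\dagger}\Vert$ in play by writing the argument as $(\underline{\tau}-1)\delta$ times a ratio $\geq 1$ and applying (\ref{concave_id}) with that ratio, so it only needs the lower discrepancy bound at that point; both variants are valid under the stated hypothesis $\alpha \in \overline{S}\cap\underline{S}$ and yield identical estimates.
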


\begin{proof}
The proof starts with the assertion of Lemma \ref{misfit_upper_bnd} 
in the following form,
\begin{eqnarray}
\frac{1}{4}\Vert T u_{\alpha}^{\delta} - T u^{\dagger}\Vert^{2} 
& \leq & 
\delta^2 + \alpha\Psi\left(\Vert T u_{\alpha}^{\delta} - T u^{\dagger}\Vert
\right)
\nonumber\\ & \leq &
\delta^2 + \overline{\alpha}
\Psi\left( \Vert T u_{\alpha}^{\delta} - T u^{\dagger}\Vert\right)
\nonumber\\ & \stackrel{(\ref{regpar_upper_bnd})}{=} &
\delta^2 + 
\delta^2\frac{\Psi\left( \Vert T u_{\alpha}^{\delta} - T u^{\dagger}\Vert
\right)}{\Psi(\delta)}
\nonumber\\ & \stackrel{(\ref{MDP2})}{\leq} &
\delta^2 \frac{1}{(\underline{\tau} - 1)\delta}
\Vert Tu_{\alpha}^{\delta} - Tu^{\dagger}\Vert + 
\delta^2\frac{1}{\Psi(\delta)}
\Psi\left( (\underline{\tau} - 1)\delta
\frac{\Vert T u_{\alpha}^{\delta} - T u^{\dagger}\Vert}
{(\underline{\tau} - 1)\delta} \right)
\nonumber\\ & \stackrel{(\ref{concave_id})}{\leq} &
\delta \frac{1}{(\underline{\tau} - 1)}
\Vert Tu_{\alpha}^{\delta} - Tu^{\dagger}\Vert +
\delta \frac{1}{\Psi(\delta)}\frac{1}{(\underline{\tau} - 1)}
\Vert T u_{\alpha}^{\delta} - T u^{\dagger}\Vert
\Psi\left((\underline{\tau} - 1)\delta \right)
\nonumber\\ &\leq &
\delta \frac{1}{(\underline{\tau} - 1)}
\Vert Tu_{\alpha}^{\delta} - Tu^{\dagger}\Vert +
\delta \frac{1}{\Psi(\delta)}\frac{1}{(\underline{\tau} - 1)}
\Vert T u_{\alpha}^{\delta} - T u^{\dagger}\Vert
\Psi\left((\overline{\tau} + 1)\delta \right)
\nonumber\\ & \stackrel{(\ref{concave_id})}{\leq} &
\delta \frac{1}{(\underline{\tau} - 1)}
\Vert Tu_{\alpha}^{\delta} - Tu^{\dagger}\Vert + 
\delta \frac{1}{(\underline{\tau} - 1)}
\Vert T u_{\alpha}^{\delta} - T u^{\dagger}\Vert
\left(\overline{\tau} + 1\right).
\end{eqnarray}
\end{proof}

\begin{theorem}
Under the same assumptions in Lemma \ref{misfit_upper_bnd2}, 
the total error estimation that is 
between the regularized solution $u_{\alpha}^{\delta}$
and the minimum norm solution $u^{\dagger}$ attains the bound
\begin{displaymath}
E(u_{\alpha}^{\delta} , u^{\dagger}) = 
\mathcal{O}\left(\frac{\delta^2}{\alpha}\right).
\end{displaymath} 
\end{theorem}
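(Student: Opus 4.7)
The plan is to combine the VSC with the two estimates already established under the standing hypotheses: the $\mathcal{J}$-difference bound from Lemma~\ref{lemma_J_diff} and the misfit bound from Lemma~\ref{misfit_upper_bnd2}. I would start by applying Assumption~\ref{assump_conventional_variational_ineq} at $u = u_{\alpha}^{\delta}$, which directly yields
\[
E(u_{\alpha}^{\delta}, u^{\dagger}) \;\le\; \tfrac{\Lambda}{\sigma}\Bigl[\mathcal{J}(u_{\alpha}^{\delta}) - \mathcal{J}(u^{\dagger}) + \Psi\bigl(\Vert T u_{\alpha}^{\delta} - T u^{\dagger}\Vert\bigr)\Bigr],
\]
so it remains to estimate each of the two summands on the right by a multiple of $\delta^{2}/\alpha$.

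The first summand is handled by Lemma~\ref{lemma_J_diff}, whose proof already furnished the intermediate bound $\mathcal{J}(u_{\alpha}^{\delta}) - \mathcal{J}(u^{\dagger}) \le \delta^{2}/(2\alpha)$; this is exactly of the desired order, with the discrepancy-principle coefficients absorbed into the constant.

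For the second summand, I would use Lemma~\ref{misfit_upper_bnd2} to obtain $\Vert T u_{\alpha}^{\delta} - T u^{\dagger}\Vert \le K\delta$ with $K = 4(\overline{\tau}+2)/(\underline{\tau}-1) \ge 1$. Monotonicity of $\Psi$ together with the concavity inequality (\ref{concave_id}) then gives $\Psi(\Vert T u_{\alpha}^{\delta} - T u^{\dagger}\Vert) \le K\,\Psi(\delta)$. To convert $\Psi(\delta)$ into $\delta^{2}/\alpha$, I exploit the upper-bound assumption $\alpha \le \overline{\alpha} = \Phi(\delta) = \delta^{2}/\Psi(\delta)$ from Lemma~\ref{misfit_upper_bnd2}, which rearranges to $\Psi(\delta) \le \delta^{2}/\alpha$.

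Putting these pieces together produces
\[
E(u_{\alpha}^{\delta}, u^{\dagger}) \;\le\; \tfrac{\Lambda}{\sigma}\left(\tfrac{1}{2} + K\right)\tfrac{\delta^{2}}{\alpha},
\]
which is the asserted $\mathcal{O}(\delta^{2}/\alpha)$ bound. The only step requiring any care is the conversion from $\Psi(\delta)$ to $\delta^{2}/\alpha$; once one recognises that this is precisely what the upper bound $\alpha \le \Phi(\delta)$ in (\ref{regpar_upper_bnd}) encodes, the remainder is routine assembly of the previously established lemmas.
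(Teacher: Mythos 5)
Your proposal is correct and follows essentially the same route as the paper: apply the VSC at $u = u_{\alpha}^{\delta}$, control the $\mathcal{J}$-difference via Lemma \ref{lemma_J_diff}, control the misfit via Lemma \ref{misfit_upper_bnd2} together with concavity of $\Psi$, and finally convert $\Psi(\delta)$ into $\delta^{2}/\alpha$ through the upper bound $\alpha \leq \overline{\alpha} = \Phi(\delta)$. The only (harmless) divergences are that you keep the $\mathcal{J}$-difference in the form $\delta^{2}/(2\alpha)$ directly rather than first passing through $\Psi(\delta)$, and that you correctly retain the factor $4$ from (\ref{est_misfit_upper_bnd2}) which the paper's displayed chain silently drops; neither affects the asserted order.
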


\begin{proof}
For the \textit{a-posteriori} choice of the regularization
parameter $\alpha \in \overline{S}\cap\underline{S}$ and some 
$1 < \underline{\tau}\leq\overline{\tau}<\infty ,$ following up the
(\ref{variational_ineq}),
\begin{eqnarray}
\frac{\sigma}{2}E(u_{\alpha}^{\delta},u^{\dagger}) 
&\leq & \mathcal{J}(u_{\alpha}^{\delta}) - 
\mathcal{J}(u^{\dagger}) + 
\Psi\left(\Vert T u_{\alpha}^{\delta} - T u^{\dagger}\Vert\right)
\nonumber\\ & \stackrel{(\ref{est_misfit_upper_bnd2})}{\leq} &
\mathcal{J}(u_{\alpha}^{\delta}) - \mathcal{J}(u^{\dagger}) + 
\Psi\left(\frac{\overline{\tau}+2}{\underline{\tau}-1}\delta\right)
\nonumber\\ &\stackrel{(\ref{J-diff2})\&(\ref{concave_id})}{\leq}&
\frac{1}{(\underline{\tau} - 1)}\Psi(\delta) + \left(\overline{\tau}+2\right)
\Psi\left( \frac{1}{\underline{\tau}-1}\delta\right)
\nonumber\\ & \leq &
\frac{1}{(\underline{\tau} - 1)}\Psi(\delta) + \left(\overline{\tau}+2\right)
\Psi\left( \frac{\overline{\tau}+1}{\underline{\tau}-1}\delta\right)
\nonumber\\ & \stackrel{(\ref{concave_id})}{\leq} &
\left(\frac{1}{\underline{\tau} - 1} + \left(\overline{\tau}+2\right)
\frac{\overline{\tau}+1}{\underline{\tau}-1}\right)\Psi(\delta)
\nonumber
\end{eqnarray}
which implies, due to
$\alpha \leq \overline{\alpha} := \frac{\delta^2}{\Psi(\delta)},$
\begin{eqnarray}
\label{total_err_regpar_upper_bnd}
E(u_{\alpha}^{\delta},u^{\dagger}) \leq \frac{2}{\sigma}
C(\underline{\tau},\overline{\tau})\frac{\delta^2}{\alpha},
\end{eqnarray}
where,
\begin{eqnarray}
C(\underline{\tau},\overline{\tau}) := 
\left( \frac{1}{\underline{\tau}-1} + \left(\overline{\tau}+2\right)
\frac{\overline{\tau}+1}{\underline{\tau}-1} \right).
\end{eqnarray}
\end{proof}

%--------------------------------------------------------------------------
% Lepskii principle
%--------------------------------------------------------------------------
%\subsection{Iteratively updated regularization parameter: Lepskij principle}
%\label{lepskii_principle}
%
%In order to define {\em a-posteriori} choice of the regularization parameter, 
%we review one of the well-known rules Lepskij principle.
%For some selection of $q \in (0,1),$ let us define the grid
%\begin{eqnarray}
%\label{regpar_disc}
%\Delta_{q} := \{\alpha_j \vert \alpha_j := q^{j}\alpha_{0}, \mbox{ for } 
%j = 1, 2, \cdots \}.
%\end{eqnarray}
%In this discretization scheme, $q$ determines the roughness of searching 
%the optimal parameter, \textbf{\cite[Subsection 4.2]{HofmannMathe12}}.
%
%Given a finite set of elements $\{x_1, \cdots, x_m \}$ in some metric space
%$(M,d)$ and a non-increasing function
%$\Phi : \{1, \cdots , m \} \rightarrow \R_{+},$
%\begin{eqnarray}
%\label{lepskii_iterative}
%\overline{j} := \max \{j \leq m, d(x_i , x_j)\leq 2 \Phi(i), 
%\mbox{ for all } i\leq j \}.
%\end{eqnarray}
%Here the function $\Phi$ is determined in such a way that represents
%some bound which is a function of the noise level $\delta$ for
%the total error estimation.

%%%%%%%%%%%%%%%%%%%%%%%%%%%%%%%%%%%%%%%%%%%%
% Development of the Primal Dual Algorithm
%%%%%%%%%%%%%%%%%%%%%%%%%%%%%%%%%%%%%%%%%%%%
\section{Primal Dual Algorithm With Convex Extrapolation Step}
\label{primal-dual}

%\textcolor{blue}{SOME MOTIVATION REGARDING MOVING TO THE DISCRETE
%SCHEME FROM CONTINUOUS SCHEME MUST BE GIVEN BASED ON 
%\textbf{\cite[Chapter 6]{Engl96}, \cite[Chapter 6]{Schuster12}}}

Our objective is to find an iteratively regularized approximation
$\hat{u}_{\alpha_{i+1}}^{\delta}$ of the regularized
solution for the problem (\ref{problem0}). To this end,
we will develop some proximal-gradient algorithm
evolving from the well-known formulation 
\textbf{\cite{BauschkeCombettes11, BeckTeboulle09, CombettesPesquet11}}, 
and the references therein,
\begin{eqnarray}
\label{prox_alg}
\hat{u}_{\alpha_{i+1}}^{\delta} = 
\mathrm{prox}_{\mu h}\left( \hat{u}_{\alpha_{i}}^{\delta} - 
\mu \frac{1}{\alpha_{i}} T^{T}
(T\hat{u}_{\alpha_{i}}^{\delta} - v^{\delta}) \right),
\end{eqnarray}
with an appropriately chosen step-length $\mu > 0.$
The stability of the iteratively
regularized approximation 
$\hat{u}_{\alpha_{i+1}}^{\delta}$ of $u^{\dagger}$ in 
(\ref{J-min_est}) will be studied in the Hadamard sense. 

Our algorithm consists of two nested loops. The outcome
is produced by some convex extrapolation step
within the outer loop. The outer loop steps
will be denoted by $i \in [I_{\mathrm{min}} , I_{\mathrm{max}}]$
and is a function of the inner loop which produces the
dual solution. The iteration steps for the inner loop
will also be denoted by $j \in [J_{\mathrm{min}} , J_{\mathrm{max}}].$ 
The inner loop is not only necessary for 
calculating the dual variable but also for maintaining
the dynamism of Bregman iteration.

For the sake of reading our algorithm  
and the following formulations of the upcoming 
results conveniently,  
we introduce some new notations below
\begin{eqnarray}
\nonumber
\left\{ \begin{array}{rcl}
\hat{u}_{i} & = & \hat{u}_{\alpha_{i}}^{\delta} \\
\hat{u}_{i+1} & = & \hat{u}_{\alpha_{i + 1}}^{\delta} \\
\hat{u}_{i}^{j} & = & \hat{u}_{\alpha_{i}}^{\delta , j} \\
\hat{w}_{i}^{j} & = & \hat{w}_{\alpha_{i}}^{\delta , j} \\
D^{T}(\hat{w}_{i}^{j} - \hat{w}_{i-1}^{j}) & = & 
D^{T}(\hat{w}_{\alpha_{i}}^{\delta , j} - 
\hat{w}_{\alpha_{i-1}}^{\delta , j}).
\end{array}\right.
\end{eqnarray}

%--------------------------------------------------------------------------
% ALGORITHM - I
\begin{algorithm}
  \caption{ Nested Primal Dual Algorithm with Convex Extrapolation }
  \label{algorithmNPA-I}
  \begin{algorithmic}[1]
    \Procedure{ The iterative regularization procedure to find the minimum 
    of the objective functional (\ref{obj_functional2}) }
    {}%\Comment{The g.c.d. of a and b}
      \State \textbf{initiation} 
     Define the static parameters $\mu,$ $\lambda$ and 
    $\nu.$ Given $u_{0},$ calculate 
    $\hat{w}_{0}^{0} \in \partial \Vert D u_{0}\Vert_1$ and set
        $\hat{w}_{1}^{0} = \hat{w}_{0}^{0}$
      \While{$\underline{\tau}\delta\leq
      \Vert T u_{i+1} - v^{\delta}\Vert_{\mathcal{Y}}\leq
      \overline{\tau}\delta$ or 
      $\Vert u_{i+1} - u^{\dagger}\Vert_{\mathcal{X}}/
      \Vert u^{\dagger}\Vert_{\mathcal{X}} \leq \epsilon$}
      %\Comment{We have the answer if r is 0}
        \State Update the regularization parameter $\alpha_{i}$ 
        %\Comment{Parameter update}
        \For{ $j = 1, \cdots, J$}
         \State $\hat{w}_{0}^{0} = \hat{w}_{0}^{J}$
         %\Comment{for $i > 0$}
         \State 
         \label{algorithm_primalstep}
         $\hat{u}_{i}^{j} = \mathrm{prox}_{\mu h}                                                                                                                                   
        \left[u_{i} - \mu\left( T^{T}(T u_{i} - v^{\delta}) + 
        \alpha_i D^{T}(\hat{w}_{i}^{j} - 
        \hat{w}_{0}^{j})\right) \right]$ 
        %\Comment{Primal update}
        \State
        \label{algorithm_dualstep}
         $\hat{w}_{i}^{j+1} = 
        \mathrm{prox}_{\nu_i g^{\ast}} 
        \left( \hat{w}_{i}^{j} + \nu D \hat{u}_{i}^{j} \right)$ 
        %\Comment{Dual update}
        \State\textbf{calculate} $D^{T}(\hat{w}_{i}^{j} - \hat{w}_{0}^{j})$
%        \State\textbf{update} $\hat{w}_{n}^{k-1} = \hat{w}_{n}^{k}$ and 
%        $\hat{w}_{n-1}^{k-1} = \hat{w}_{n-1}^{k}$
      \EndFor
        
        \State \textbf{update} 
        \label{cvx_extrapolation}
        $u_{i + 1} = u_i + \lambda(\hat{u}_i^{J} - u_i) $ 
        \Comment{Convex extrapolation with some $\lambda \in (1,2)$}
        \State $u_0 = u_{i+1}$ and 
        $w_{0}^{j} \in \partial \Vert D u_0\Vert_{1}$
      \EndWhile     

    \EndProcedure
  \end{algorithmic}
\end{algorithm}

Note that the outer loop is not just to update the parameters,
but also to obtain the targeted variable $u_{i+1}$ and
the dual variable $\hat{w}_{0}^{j}.$ 

%------------------------------------
% Convergence of the iterative scheme
%------------------------------------
\subsection{Convergence of the iterative regularization scheme}
\label{iterative_convergence}

We refer the reader to the well-established studies
\textbf{\cite[Section 6]{Engl96},
\cite[Eq. (7.7) \& (7.8)]{HohageSchormann98}}
regarding the concept of iterative regularization. 

Convergence of our iterative variational regularization strategy
must be verified from two aspects: as an iterative scheme and as 
a variational regularization strategy. Therefore, 
here, we study the convergence both whilst the noise amount
$\delta \rightarrow 0$ and the iteration step 
$i \rightarrow \infty.$ 
In analogous with
the continous setting given earlier, there exists
some $i^{\ast} \in (I_{\mathrm{min}} , I_{\mathrm{max}})$
such that 
\begin{displaymath}
\alpha_{i^{\ast}} = \alpha_{i^{\ast}}(\delta , v^{\delta}) 
\rightarrow 0
\mbox{, and } \frac{\delta^2}{\alpha_{i^{\ast}}} \rightarrow 0
\mbox{, as } \delta\rightarrow 0.
\end{displaymath}
Then for any $i \leq i^{\ast},$ the convergence
of the regularized solution to the true solution of the inverse 
problem in the Hadamard sense,
\begin{displaymath}
\Vert\hat{u}_{i^{\ast}} - u^{\dagger}\Vert \rightarrow 0 
\mbox{, as }\delta \rightarrow 0,
\end{displaymath}
furthermore, the convergence in the iterative sense is based
on observing the behaviour of the algorithm's result
during the iteration procedure
\begin{displaymath}
\Vert\hat{u}_{i} - u^{\dagger}\Vert \rightarrow 0 
\mbox{, as } i \rightarrow \infty.
\end{displaymath}
However, from technical point of view, it might be difficult to observe
these rates of convergences separately. Therefore, we will derive
some unified convergence schemce where both concepts
are observed.
Despite its worth, investigation of the number of the iteration steps 
is not in the interest of this work. We will postulate 
the conditions for the choices of the parameters
explicitly so that the Hadamard convergence can be observed.

Note that here we are talking about estimating some 
upper bound for the total error 
estimation per iteration step $i$ that depends on the noise 
level $\delta.$ Iterative total error
estimation can be decomposed in the following form,
\begin{displaymath}
\Vert\hat{u}_{i+1} - u^{\dagger}\Vert \leq 
\Vert\hat{u}_{i+1} - u_{\alpha}^{\delta}\Vert + 
\Vert u_{\alpha}^{\delta} - u^{\dagger}\Vert.
\end{displaymath}
The continuous analysis on the first part of this work
has been dedicated to bound the second term 
on the right hand side. In this  section,
we will establish the necessary and sufficient conditions
to find the bound for the term 
$\Vert\hat{u}_{i+1} - u_{\alpha}^{\delta}\Vert.$
In other words, following up the literature 
\textbf{\cite{ChenLoris18}},
we will prove that with the correct choices
of the parameters, the
iterative solution $\hat{u}_{i+1}$
for the fixed point equation
(\ref{prox_alg}) is the approximation of the regularized
minimizer of the objective functional $F_{\alpha}$
in (\ref{obj_functional2}). Unlike in the aforementioned 
literature, imposing the coercivity of the gradient 
operator $D$ of the TV functional {\em cf. 
\textbf{\cite[Theorem 2]{ChenLoris18}}},
is not necessary.

The primary tool to study stability of the
iteratively regularized solution that is produced by the
proximal gradient algorithm is formulated below.
\begin{ppty}\cite[Lemma 1]{LorisVerhoeven11}
\label{prox_update}
If $x^{+} = \mathrm{prox}_{g}(x^{-} + \Delta)$, then
for any $y \in \mathbb{R}^{N},$
\begin{eqnarray}
\label{prox_update_assertion}
\Vert x^{+} - y\Vert^{2} \leq 
\Vert x^{-} - y\Vert^{2} - 
\Vert x^{+} - x^{-}\Vert^2
+ 2 \langle x^{+} - y , \Delta \rangle + 2 g(y) - 2 g(x^{+}).
\end{eqnarray}
\end{ppty}

\begin{theorem}
\label{IterMinVSMin}
Let the iterative regularization parameter be bounded
by the continuous regularization parameter, 
$\alpha_i < \alpha $ and furthermore 
$\alpha(\delta , v^{\delta}) \rightarrow 0$ as 
$delta \rightarrow 0$ and let $\nu_i = {\alpha}^{1/2}.$ 
Also, let the relaxation parameter satisfy
$\frac{1}{\lambda} < 1 - (\alpha_i - \alpha)$ for
$\lambda \in (1,2).$ Then,
given the condition on the step-length 
$\mu \leq \frac{2}{\Vert T\Vert^2}$ and the initial
guess for the inner loop $\hat{w}_{0} = \hat{w}_{0}^{J},$
the iterative update $u_{i+1}$ converges to
the regularized minimizer $u_{\alpha}^{\delta}$
at some linear rate. That is,
\begin{eqnarray}
\nonumber
\Vert u_{i+1} - u_{\alpha}^{\delta}\Vert \leq  0 
\mbox{, as }i \rightarrow \infty \mbox{, and as }
\alpha \rightarrow 0.
\end{eqnarray}
\end{theorem}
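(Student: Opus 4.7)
The plan is to set up a Lyapunov-style inequality for the combined primal-dual distance $\|\hat{u}_i^j - u_\alpha^\delta\|^2 + C\|\hat{w}_i^j - \hat{w}_\alpha^\delta\|^2$ by matching each proximal update in Algorithm \ref{algorithmNPA-I} against the fixed-point characterization of $(u_\alpha^\delta, \hat{w}_\alpha^\delta)$ from Theorem \ref{thrm_subgrad_char}, and then converting the resulting bound on $\|\hat{u}_i^J - u_\alpha^\delta\|^2$ into a contraction on $\|u_{i+1} - u_\alpha^\delta\|^2$ through the convex-extrapolation identity.

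First, I would apply Property \ref{prox_update} to the primal step of line \ref{algorithm_primalstep} with $y = u_\alpha^\delta$ and $\Delta = -\mu\bigl(T^T(Tu_i - v^\delta) + \alpha_i D^T(\hat{w}_i^j - \hat{w}_0^j)\bigr)$. Subtracting the fixed-point identity $u_\alpha^\delta = \mathrm{prox}_{\mu h}[u_\alpha^\delta - \mu(\tfrac{1}{\alpha}T^T(Tu_\alpha^\delta - v^\delta) + D^T(\hat{w}_\alpha^\delta - \hat{w}^0))]$ inside the bracket, the cross-term splits into a misfit contribution $\langle \hat{u}_i^j - u_\alpha^\delta,\, T^T T(u_i - u_\alpha^\delta)\rangle$ and a dual contribution involving $D^T(\hat{w}_i^j - \hat{w}_\alpha^\delta)$. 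Completing squares and invoking the Lipschitz estimate \eqref{LipschitzEstMisfit} together with the step-length bound $\mu \leq 2/\|T\|^2$ allows the misfit quadratic to be absorbed, yielding an inequality of the form
\begin{equation*}
\|\hat{u}_i^j - u_\alpha^\delta\|^2 + \mu\,\alpha_i\, \langle D(\hat{u}_i^j - u_\alpha^\delta),\, \hat{w}_i^j - \hat{w}_\alpha^\delta\rangle \leq \|u_i - u_\alpha^\delta\|^2 - (\text{nonneg. dissipation}).
\end{equation*}

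Second, I would apply Property \ref{prox_update} to the dual step of line \ref{algorithm_dualstep} with $y = \hat{w}_\alpha^\delta$ and $\Delta = \nu D\hat{u}_i^j$, and subtract the dual fixed-point relation $\hat{w}_\alpha^\delta = \mathrm{prox}_{\nu g^*}(\hat{w}_\alpha^\delta + \nu D u_\alpha^\delta)$. This produces a bound on $\|\hat{w}_i^{j+1} - \hat{w}_\alpha^\delta\|^2$ whose cross-term $\langle \hat{w}_i^{j+1} - \hat{w}_\alpha^\delta,\, D(\hat{u}_i^j - u_\alpha^\delta)\rangle$ is designed, with the choice $\nu_i = \alpha^{1/2}$ and the weighting $C = \alpha_i \mu / \nu$, to cancel the dual cross-term left over from the primal estimate. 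Summing the weighted primal and dual estimates from $j=1$ to $J$ telescopes the dual distances (thanks to the initialization $\hat{w}_0 = \hat{w}_0^J$ that carries the dual state across outer loops) and leaves an inequality of the form $\|\hat{u}_i^J - u_\alpha^\delta\|^2 \leq (1 - \eta_i)\|u_i - u_\alpha^\delta\|^2$ for some $\eta_i > 0$ governed by the gap $\alpha - \alpha_i$.

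Third, I would convert this bound on $\hat{u}_i^J$ into one on $u_{i+1}$ using the extrapolation $u_{i+1} = u_i + \lambda(\hat{u}_i^J - u_i)$, equivalently $\hat{u}_i^J = \tfrac{1}{\lambda}u_{i+1} + (1-\tfrac{1}{\lambda})u_i$. Since $\lambda \in (1,2)$ gives $\tfrac{1}{\lambda} \in (\tfrac{1}{2},1)$, identity \eqref{strct_cvx_eq} with $\rho = 1/\lambda$ expresses $\|\hat{u}_i^J - u_\alpha^\delta\|^2$ as a convex combination of $\|u_{i+1} - u_\alpha^\delta\|^2$ and $\|u_i - u_\alpha^\delta\|^2$ minus a positive quantity. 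Solving for $\|u_{i+1} - u_\alpha^\delta\|^2$ and substituting the estimate from the previous step yields
\begin{equation*}
\|u_{i+1} - u_\alpha^\delta\|^2 \leq \bigl(1 - \lambda(1 - (\alpha_i - \alpha))\bigr)\|u_i - u_\alpha^\delta\|^2 + (\text{neg. terms}),
\end{equation*}
so that the hypothesis $1/\lambda < 1 - (\alpha_i - \alpha)$ makes the coefficient strictly less than $1$, giving linear contraction.

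The hard part will be the careful bookkeeping in steps one and two to ensure that the dual cross-terms cancel exactly: this forces a specific relative scaling among $\mu$, $\nu_i$, and $\alpha_i$, and the telescoping across $j$ only works if the dual variable's warm-start $\hat{w}_0 = \hat{w}_0^J$ between outer iterations is used as stated. A secondary subtlety is that the contraction factor depends on $\alpha$, so to combine the $i \to \infty$ rate with the outer regularization limit $\alpha(\delta, v^\delta) \to 0$ as $\delta \to 0$, one must verify that $\eta_i$ remains bounded away from zero uniformly along the chosen parameter schedule, which is where the condition $\alpha_i < \alpha$ and the explicit relaxation bound become essential.
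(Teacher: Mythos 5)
Your proposal follows essentially the same route as the paper's proof: Property~\ref{prox_update} applied to both the primal and dual proximal steps against the fixed-point characterization of $(u_{\alpha}^{\delta},\hat{w}_{\alpha}^{\delta})$ from Theorem~\ref{thrm_subgrad_char}, the Lipschitz estimate \eqref{LipschitzEstMisfit} with $\mu\leq 2/\Vert T\Vert^2$ to absorb the misfit quadratic, a weighted sum of the primal and dual estimates to control the cross-terms (leaving a residual governed by $\alpha-\alpha_i$), and finally the identity \eqref{strct_cvx_eq} applied to the extrapolation step with the relaxation bound yielding the contraction. The only real divergence is cosmetic: you telescope the dual estimates over the whole inner loop $j=1,\dots,J$, whereas the paper works only with the last inner iterate and disposes of the leftover $\Vert\hat{w}_{i}^{J-1}-\hat{w}_{\alpha}^{\delta}\Vert$ term through the choice $\nu_i=\alpha^{1/2}$.
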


\begin{proof}
First, due to the last update step in 
(\ref{cvx_extrapolation}), we apply the equality in 
(\ref{strct_cvx_eq}) as follows,
\begin{eqnarray}
\label{strct_cvx_eq2}
\Vert u_{i+1} - u_{\alpha}^{\delta}\Vert^{2} = 
(1 - \lambda)\Vert u_{i} - u_{\alpha}^{\delta}\Vert^2 + 
\lambda\Vert\hat{u}_{i}^{J} - u_{\alpha}^{\delta}\Vert 
-\lambda(1 - \lambda)
\Vert u_i - \hat{u}_{i}^{J}\Vert^2.
\end{eqnarray}
Boundedness of each term here must be guaranteed.
Assuring that the primal term $\hat{u}_{i}^{J}$ is the 
approximation of the regularized minimum $u_{\alpha}^{\delta}$
will convey the reasons behind the choices of the parameters.
To this end, applying the Property \ref{prox_update} on the
primal variable $\hat{u}_{i}^{J}$ produced at the final outer 
loop step $J$ returns
\begin{eqnarray}
\Vert\hat{u}_{i}^{J} - u_{\alpha}^{\delta}\Vert^2 \leq 
\Vert u_{i} - u_{\alpha}^{\delta}\Vert - 
\Vert \hat{u}_{i}^{J} - u_{i}\Vert^2
& - & 2\mu\langle \hat{u}_{i}^{J} - u_{\alpha}^{\delta} ,  
T^{T}(T u_{i} - v^{\delta}) \rangle 
\nonumber\\
& - & 2\mu\alpha_{i}
\langle \hat{u}_{i}^{J} - u_{\alpha}^{\delta} ,  
D^{T}(\hat{w}_{i}^{J} - \hat{w}_{0}^{J}) \rangle .
\nonumber
\end{eqnarray}
Likewise, application of the Property \ref{prox_update} on the
regularized minimizer $u_{\alpha}^{\delta}$ yields
\begin{eqnarray}
\Vert u_{\alpha}^{\delta} - \hat{u}_{i}^{J}\Vert^2 \leq 
\Vert u_{\alpha}^{\delta} - \hat{u}_{i}^{J}\Vert^2 - 
\Vert u_{\alpha}^{\delta} - u_{\alpha}^{\delta}\Vert^2 
& - & 2\mu\langle u_{\alpha}^{\delta} - \hat{u}_{i}^{J} , 
T^{T}(T u_{\alpha}^{\delta} - v^{\delta}) \rangle
\nonumber\\
& - & 2\mu\alpha\langle u_{\alpha}^{\delta} - \hat{u}_{i}^{J} , 
D^{T}(\hat{w}_{\alpha}^{\delta} - w_0)\rangle
\nonumber 
\end{eqnarray}
If we sum up the last two estimations, then 
\begin{eqnarray}
\Vert\hat{u}_{i}^{J} - u_{\alpha}^{\delta}\Vert^2 \leq 
\Vert u_i - u_{\alpha}^{\delta}\Vert^2 - 
\Vert\hat{u}_{i}^{J} - u_{i}\Vert^2 
& + & 2\mu\langle u_{\alpha}^{\delta} - \hat{u}_{i}^{J} , 
T^{T}T(u_i - u_{\alpha}^{\delta}) \rangle
\nonumber\\
& - &2\mu\alpha_{i}\langle \hat{u}_{i}^{J} - 
u_{\alpha}^{\delta} , 
D^{T}(\hat{w}_{i}^{J} - \hat{w}_{0}^{J})\rangle 
\nonumber\\
& - & 2\mu\alpha\langle u_{\alpha}^{\delta} - \hat{u}_{i}^{J} , 
D^{T}(\hat{w}_{\alpha}^{\delta} - w_0)\rangle .
\nonumber 
\label{primVSregmin1}
\end{eqnarray}
Let us set $I = \langle u_{\alpha}^{\delta} - \hat{u}_{i}^{j} , 
T^{T}T(u_i - u_{\alpha}^{\delta}) \rangle$ and
rewrite the first inner product on the right hand side,
\begin{eqnarray}
I = \langle u_{\alpha}^{\delta} - u_{i} , 
T^{T}T(u_i - u_{\alpha}^{\delta}) \rangle + 
\langle u_{i} - \hat{u}_{i}^{J} , 
T^{T}T(u_i - u_{\alpha}^{\delta}) \rangle .
\nonumber
\end{eqnarray}
Let us do further assignments for the inner products,
\begin{eqnarray}
I_{1} & = & - \langle u_{i} - u_{\alpha}^{\delta} , 
T^{T}T(u_i - u_{\alpha}^{\delta}) \rangle
\nonumber\\
I_{2} & = & \langle u_{i} - \hat{u}_{i}^{J} , 
T^{T}T(u_i - u_{\alpha}^{\delta}) \rangle .
\nonumber
\end{eqnarray} 
The early estimation in (\ref{LipschitzEstMisfit})
is applied to bound $I_1$ as such,
\begin{eqnarray}
I_{1} \leq - \frac{1}{\Vert T\Vert^2}
\Vert T^{T}T(u_{i} - u_{\alpha}^{\delta})\Vert^2.
\nonumber
\end{eqnarray}
Also, the term $I_2$ can be bounded by using the simple identity, 
$0 \leq \Vert a \vec{x} - b \vec{y}\Vert^2 = 
a^2\Vert\vec{x}\Vert^2 - 
2ab \langle \vec{x} , \vec{y} \rangle + 
b^2\Vert\vec{y}\Vert^2,$
for $a,b \in \mathbb{R}_{+},$
\begin{eqnarray}
I_2 \leq \frac{1}{2}\Vert u_{i} - \hat{u}_{i}^{J}\Vert^2
+ \frac{1}{2}\Vert T^{T}T(u_{i} - u_{\alpha}^{\delta})\Vert^2.
\nonumber
\end{eqnarray}
Then, with both of the bounds on $I_1$ and $I_2,$
(\ref{primVSregmin1}) reads
\begin{eqnarray}
\Vert\hat{u}_{i}^{J} - u_{\alpha}^{\delta}\Vert^2 \leq 
\Vert u_i - u_{\alpha}^{\delta}\Vert^2 - 
\Vert\hat{u}_{i}^{J} - u_{i}\Vert^2 
& - &\mu\frac{1}{\Vert T\Vert^2}
\Vert T^{T}T(u_{i} - u_{\alpha}^{\delta})\Vert^2
\nonumber\\
& + & \frac{1}{2}\Vert u_{i} - \hat{u}_{i}^{J}\Vert^2 +
\mu^{2} \frac{1}{2}\Vert T^{T}T(u_{i} - u_{\alpha}^{\delta})\Vert^2
\nonumber\\
& - & 2\mu\alpha_{i}\langle \hat{u}_{i}^{J} - 
u_{\alpha}^{\delta} , 
D^{T}(\hat{w}_{i}^{J} - \hat{w}_{0}^{J})\rangle ,
\nonumber
\end{eqnarray}
which is in other words,
\begin{eqnarray}
\Vert\hat{u}_{i}^{J} - u_{\alpha}^{\delta}\Vert^2 \leq 
\Vert u_i - u_{\alpha}^{\delta}\Vert^2 - 
\frac{1}{2}\Vert u_{i} - \hat{u}_{i}^{J}\Vert^2 
& + &\mu\left(\mu\frac{1}{2} - \frac{1}{\Vert T\Vert^2}\right)
\Vert T^{T} T(u_{i} - u_{\alpha}^{\delta})\Vert^2 
\nonumber\\
& - &2\mu\alpha_{i}\langle \hat{u}_{i}^{J} - 
u_{\alpha}^{\delta} , 
D^{T}(\hat{w}_{i}^{J} - \hat{w}_{0}^{J})\rangle
\nonumber\\
& - & 2\mu\alpha\langle u_{\alpha}^{\delta} - \hat{u}_{i}^{J} , 
D^{T}(\hat{w}_{\alpha}^{\delta} - w_0)\rangle .
\label{primVSregmin2}
\end{eqnarray}
Now, we apply Property \ref{prox_update}
for the dual variables $\hat{w}_{i}^{J}$ and 
$\hat{w}_{\alpha}^{\delta}.$ First, by the definition of
$\hat{w}_{\alpha}^{\delta}$ in (\ref{eq_subgrad_char}),
\begin{eqnarray}
\Vert\hat{w}_{\alpha}^{\delta} - \hat{w}_{i}^{J}\Vert^2 \leq 
\Vert\hat{w}_{\alpha}^{\delta} - \hat{w}_{i}^{J}\Vert^2 -
\Vert\hat{w}_{\alpha}^{\delta} - \hat{w}_{\alpha}^{\delta}\Vert^2 
+ 2\nu\langle \hat{w}_{\alpha}^{\delta} - \hat{w}_{i}^{J} , 
D u_{\alpha}^{\delta} \rangle + 2\nu g^{\ast}(\hat{w}_{i}^{J})
- 2\nu g^{\ast}(\hat{w}_{\alpha}^{\delta}),
\nonumber
\end{eqnarray}
which is,
\begin{eqnarray}
0 \leq 2\nu\langle \hat{w}_{\alpha}^{\delta} - \hat{w}_{i}^{J} , 
D u_{\alpha}^{\delta} \rangle + 2\nu g^{\ast}(\hat{w}_{i}^{J})
- 2\nu g^{\ast}(\hat{w}_{\alpha}^{\delta}).
\nonumber
\end{eqnarray}
If we multiply both sides by $\frac{\nu_{i}}{\nu} > 0,$ then
\begin{eqnarray}
0 \leq 2\nu_{i}\langle \hat{w}_{\alpha}^{\delta} - \hat{w}_{i}^{J} , 
D u_{\alpha}^{\delta} \rangle + 2\nu_{i} g^{\ast}(\hat{w}_{i}^{J})
- 2\nu_{i} g^{\ast}(\hat{w}_{\alpha}^{\delta}),
\label{dualVSdual1}
\end{eqnarray}
which will be more beneficial to our analysis soon.
Also, the definition of $\hat{w}_{i}^{J}$ implies,
\begin{eqnarray}
\Vert\hat{w}_{i}^{J} - \hat{w}_{\alpha}^{\delta}\Vert^2 \leq 
\Vert\hat{w}_{i}^{J-1} - \hat{w}_{\alpha}^{\delta}\Vert^2 - 
\Vert\hat{w}_{i}^{J} - \hat{w}_{i}^{J-1}\Vert^2 + 
2\nu_{i}\langle \hat{w}_{i}^{J} - \hat{w}_{\alpha}^{\delta} , 
D\hat{u}_{i}^{J} \rangle + 2\nu_{i} g^{\ast}(\hat{w}_{\alpha}^{\delta})
- 2\nu_{i} g^{\ast}(\hat{w}_{i}^{J})
\nonumber\\
\label{dualVSdual2}
\end{eqnarray}
Both (\ref{dualVSdual1}) and (\ref{dualVSdual2}) in total
will bring
\begin{eqnarray}
\Vert\hat{w}_{i}^{J} - \hat{w}_{\alpha}^{\delta}\Vert^2 \leq 
\Vert\hat{w}_{i}^{J-1} - \hat{w}_{\alpha}^{\delta}\Vert^2 - 
\Vert\hat{w}_{i}^{J} - \hat{w}_{i}^{J-1}\Vert^2 + 
2\nu_{i}\langle \hat{w}_{\alpha}^{\delta} - \hat{w}_{i}^{J} , 
D(u_{\alpha}^{\delta} - \hat{u}_{i}^{J}) \rangle . 
\label{dualVSdual3}
\end{eqnarray}
Now, $\nu_{i}$ times (\ref{primVSregmin2}) and $\mu\alpha$
times (\ref{dualVSdual3}),
\begin{eqnarray}
\nu_{i}\Vert\hat{u}_{i}^{J} - u_{\alpha}^{\delta}\Vert^2 + 
\mu\alpha\Vert\hat{w}_{i}^{J} - \hat{w}_{\alpha}^{\delta}\Vert^2 
& \leq &\nu_{i}\Vert u_{i} - u_{\alpha}^{\delta}\Vert^2
+\mu\alpha\Vert\hat{w}_{i}^{J - 1} - \hat{w}_{\alpha}^{\delta}\Vert^2
\nonumber\\
& - &\nu_{i}\frac{1}{2}\Vert u_{i} - \hat{u}_{i}^{J}\Vert^2 
-\mu\alpha \Vert\hat{w}_{i}^{J} - \hat{w}_{i}^{J-1}\Vert^2
\nonumber\\
& + &\mu\nu_{i}\left(\mu\frac{1}{2} - \frac{1}{\Vert T\Vert^2}\right)
\Vert T^{T}T(u_{i} - u_{\alpha}^{\delta})\Vert^2
\nonumber\\
& + &2\mu\alpha\nu_{i}\langle \hat{w}_{\alpha}^{\delta} - \hat{w}_{i}^{J} , 
D(u_{\alpha}^{\delta} - \hat{u}_{i}^{J}) \rangle
\nonumber\\
& - & 2\mu\alpha_{i}\nu_{i}\langle \hat{u}_{i}^{J} - u_{\alpha}^{\delta} , 
D^{T}(\hat{w}_{i}^{j} - \hat{w}_{0}^{J}) \rangle
\nonumber\\
& - &2\mu\alpha\nu_{i}\langle u_{\alpha}^{\delta} - \hat{u}_{i}^{J} , 
D^{T}(\hat{w}_{\alpha}^{\delta} - w_0)\rangle .
\label{primalWdual1}
\end{eqnarray}
Note that the bound for the term 
$\Vert\hat{u}_{i}^{J} - u_{\alpha}^{\delta}\Vert^2$ is in the best
interest of this analysis. So, we will drop the term
$\mu\alpha\Vert\hat{w}_{i}^{J} - \hat{w}_{\alpha}^{\delta}\Vert^2$
on the left hand side. Also, after quick calculations, with 
$\langle u , D^{T}w \rangle = \langle D u , w \rangle,$ 
on the right hand side, 
the inner products will be simplified. Thus,
(\ref{primalWdual1}) reads
\begin{eqnarray}
\nu_{i}\Vert\hat{u}_{i}^{J} - u_{\alpha}^{\delta}\Vert^2 & \leq &
\nu_i\Vert u_{i} - u_{\alpha}^{\delta}\Vert^2
+\mu\alpha\Vert\hat{w}_{i}^{J - 1} - \hat{w}_{\alpha}^{\delta}\Vert^2
-\nu_{i}\frac{1}{2}\Vert u_{i} - \hat{u}_{i}^{J}\Vert^2 
-\mu\alpha \Vert\hat{w}_{i}^{J} - \hat{w}_{i}^{J-1}\Vert^2
\nonumber\\
& + &\mu\nu_{i}\left(\mu\frac{1}{2} - \frac{1}{\Vert T\Vert^2}\right)
\Vert T^{T}T(u_{i} - u_{\alpha}^{\delta})\Vert^2
\nonumber\\
& + &2\mu(\alpha_{i} - \alpha)\nu_{i}
\langle D(u_{\alpha}^{\delta} - \hat{u}_{i}^{J}) , 
\hat{w}_{\alpha}^{\delta} - w_0 \rangle
+ 2\mu\alpha\nu_{i}
\langle D(u_{\alpha}^{\delta} - \hat{u}_{i}^{J}) , 
w_0 - \hat{w}_{0}^{J}\rangle .
\nonumber
\end{eqnarray}
Here, since the term $\hat{w}_{0}^{J}$ is assigned to
$\hat{w}_{0}$, the second inner product on the right hand
side drops. Furthermore, by the choice of the step-length,
the 5th term on the right remains negative. 
Also, we can ignore other negative terms on the right hand side.
Multplication of both sides by $\frac{1}{\nu_i}$ gives
\begin{eqnarray}
\Vert\hat{u}_{i}^{J} - u_{\alpha}^{\delta}\Vert^2 & \leq &
\Vert u_{i} - u_{\alpha}^{\delta}\Vert^2
+\frac{1}{\nu_i}\mu\alpha
\Vert\hat{w}_{i}^{J - 1} - \hat{w}_{\alpha}^{\delta}\Vert^2
\nonumber\\
& + & 2\mu(\alpha_{i} - \alpha)
\langle D(u_{\alpha}^{\delta} - \hat{u}_{i}^{J}) , 
\hat{w}_{\alpha}^{\delta} - w_0 \rangle .
\nonumber
\end{eqnarray}
Thus far, we have analysed the boundedness of the term 
$\Vert\hat{u}_{i}^{J} - u_{\alpha}^{\delta}\Vert$ in 
(\ref{strct_cvx_eq2}). So, we have obtained,
\begin{eqnarray}
\Vert u_{i+1} - u_{\alpha}^{\delta}\Vert^2 \leq 
\Vert u_{i} - u_{\alpha}^{\delta}\Vert^2 
& + &\lambda\frac{1}{\nu_i}\mu\alpha
\Vert\hat{w}_{i}^{J-1} - \hat{w}_{\alpha}^{\delta}\Vert 
 - \lambda(1 - \lambda)\Vert u_i - \hat{u}_{i}^{J}\Vert^2
\nonumber\\
& + & 2\lambda\mu(\alpha_{i} - \alpha)
\langle D(u_{\alpha}^{\delta} - \hat{u}_{i}^{J}) , 
\hat{w}_{\alpha}^{\delta} - w_0 \rangle .
\label{primVSregmin3}
\end{eqnarray}
Appropriate decomposition for the inner product on the right
hand side can be given as
\begin{eqnarray}
\langle D(u_{\alpha}^{\delta} - \hat{u}_{i}^{J}) , 
\hat{w}_{\alpha}^{\delta} - w_0 \rangle = 
\langle D(u_{\alpha}^{\delta} - u_{i}) , 
\hat{w}_{\alpha}^{\delta} - w_0 \rangle + 
\langle D(u_{i} - \hat{u}_{i}^{J}) , 
\hat{w}_{\alpha}^{\delta} - w_0 \rangle .
\nonumber
\end{eqnarray}
Then, one can bound the each individual inner products,
\begin{eqnarray}
2\lambda\mu(\alpha_{i} - \alpha)
\langle u_{\alpha}^{\delta} - u_{i} , 
D^{T}(\hat{w}_{\alpha}^{\delta} - w_0) \rangle & \leq & 
\lambda^2(\alpha_i - \alpha)\Vert u_{\alpha}^{\delta} - u_{i}\Vert^2
+ \mu^2(\alpha_i - \alpha)
\Vert D^{T}(\hat{w}_{\alpha}^{\delta} - w_0)\Vert^2,
\nonumber\\
2\lambda\mu(\alpha_{i} - \alpha)
\langle u_{i} - \hat{u}_{i}^{J} , 
D^{T}(\hat{w}_{\alpha}^{\delta} - w_0) \rangle & \leq & 
\lambda^2(\alpha_i - \alpha)\Vert u_{i} - \hat{u}_{i}^{J}\Vert^2
+ \mu^2(\alpha_i - \alpha)
\Vert D^{T}(\hat{w}_{\alpha}^{\delta} - w_0)\Vert^2.
\nonumber
\end{eqnarray}
Eventually, (\ref{primVSregmin3}) takes the following form
\begin{eqnarray}
\Vert u_{i+1} - u_{\alpha}^{\delta}\Vert^2 & \leq & 
(1 + \lambda^2(\alpha_i - \alpha))
\Vert u_i - u_{\alpha}^{\delta}\Vert^2 
+\lambda\left(\lambda(\alpha_i - \alpha) - (1 - \lambda)\right)
\Vert u_i - \hat{u}_{i}^{J}\Vert^2 
\nonumber\\
& + &\lambda\frac{1}{\nu_i}\mu\alpha
\Vert\hat{w}_{i}^{J-1} - \hat{w}_{\alpha}^{\delta}\Vert + 
2\mu^2(\alpha_i - \alpha)
\Vert D^{T}(\hat{w}_{\alpha}^{\delta} - w_0)\Vert^2.
\label{primVSregmin4}
\end{eqnarray}
Conditions on the parameters ensure the linear convergence
rate as follows; Since $\alpha_i < \alpha$ the factor
$(1 + \lambda^2(\alpha_i - \alpha)) \in (0,1).$ On the other hand,
the relaxation parameter stisfies
$\frac{1}{\lambda} < 1 - (\alpha_i - \alpha)$ for
$\lambda \in (1,2).$ Then, the factor
$\left(\lambda(\alpha_i - \alpha) - (1 - \lambda)\right)$ remains
negative. So that the terms on the right hand side
$\Vert u_i - \hat{u}_{i}^{J}\Vert^2$ and 
$\Vert D^{T}(\hat{w}_{\alpha}^{\delta} - w_0)\Vert^2$ can drop. 
Lastly, the choice of $\nu_{i}$ ensures the convergence.

\end{proof}

\begin{remark}
In the statement of the proof, although the
the parameter $\nu_{i}$ has been introduced as a function
of the the iteration step $i,$ it is set
to the upper bound of the iterative regularization 
parameter. This choice of the parameter is rather convenient
for the sake of the analysis. In the numerical tests,
this means that the parameter $\nu_{i}$ remains fixed
during the whole procedure.
\end{remark}

Now, we have come to the point where we can show that the
iteratively regularized approximation converges to the
minimum norm solution $u^{\dagger}.$ To this end, in addition
to the parameters defined in Theorem \ref{IterMinVSMin},
the VSC in Assumption 
\ref{assump_conventional_variational_ineq} 
is also needed.

\begin{theorem}
\label{IterMinVS_Exact}
Let the minimum norm solution 
$u^{\dagger} \in \mathrm{BV}(\Omega)$
satisfy the Assumption 
\ref{assump_conventional_variational_ineq}
for the linear operator equation
$T u^{\dagger} = v^{\dagger}$
where $T : \mathbb{R}^{N} \rightarrow \mathbb{R}^{M}$ 
and $v^{\delta} \in \mathcal{B}_{\delta}(v^{\dagger}).$
Given the {\em a-posteriori} choice of the dynamical 
regularization parameter 
$\alpha_{i}  = \frac{1}{i(\delta , v^{\delta})},$ 
the relaxation parameter $\lambda \in (1,2)$ 
and $u_{i} \in BV(\Omega),$
the iteratively regularized approximate minimizer 
of the objective functional (\ref{obj_functional2})
$u_{i+1}$ converges
to the minimum norm solution $u^{\dagger}$
in the Hadamard sense,
\begin{displaymath}
\Vert u_{i+1} - u^{\dagger}\Vert \rightarrow 0
\mbox{, as } 
i(\delta , v^{\delta}) \rightarrow \infty 
\mbox{ whilst }\delta \rightarrow 0 .
\end{displaymath}
\end{theorem}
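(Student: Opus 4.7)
The plan is to bound $\Vert u_{i+1} - u^{\dagger}\Vert$ via the triangle inequality decomposition already advertised just before the statement,
\begin{eqnarray}
\nonumber
\Vert u_{i+1} - u^{\dagger}\Vert \;\leq\; \Vert u_{i+1} - u_{\alpha}^{\delta}\Vert + \Vert u_{\alpha}^{\delta} - u^{\dagger}\Vert,
\end{eqnarray}
and then send each term to zero under the coupled asymptotics $i = i(\delta,v^{\delta})\to\infty$ while $\delta\to 0$. The first summand is the algorithmic error and the second is the variational regularization error; both have already been controlled in the earlier sections, so the proof will essentially be a bookkeeping argument that simultaneously invokes the two regimes with the a-posteriori choice $\alpha_i = 1/i(\delta,v^{\delta})$.

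First I would handle the continuous error $\Vert u_{\alpha}^{\delta} - u^{\dagger}\Vert$. Since $u^{\dagger}\in\mathrm{BV}(\Omega)$ satisfies Assumption \ref{assump_conventional_variational_ineq} and the regularization parameter identified with $\alpha_{i}$ is selected a-posteriori in $\overline{S}\cap\underline{S}$ via MDP (as set up in Section \ref{choice_of_regpar}), Theorem \ref{theorem_total_err} yields $E(u_{\alpha}^{\delta},u^{\dagger}) = \mathcal{O}(\Psi(\delta))$. By the properties of the index function ($\Psi$ concave, monotonically increasing, $\Psi(0)=0$) this vanishes as $\delta\to 0$, so the coefficient-compatible lower bound $\sigma$ in (\ref{variational_ineq}) pushes $\Vert u_{\alpha}^{\delta}-u^{\dagger}\Vert\to 0$.

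Next I would deal with the algorithmic error $\Vert u_{i+1} - u_{\alpha}^{\delta}\Vert$. The choice $\alpha_{i} = 1/i(\delta,v^{\delta})$ ensures $\alpha_{i}\to 0$ and, for $i$ large enough, $\alpha_{i}<\alpha(\delta,v^{\delta})$, so the hypotheses of Theorem \ref{IterMinVSMin} on the step-length $\mu\leq 2/\Vert T\Vert^2$, on $\nu_{i}=\alpha^{1/2}$, on $\hat{w}_{0}=\hat{w}_{0}^{J}$, and on the relaxation parameter $\lambda\in(1,2)$ with $1/\lambda < 1-(\alpha_{i}-\alpha)$ are all in force. Theorem \ref{IterMinVSMin} then gives linear convergence $\Vert u_{i+1}-u_{\alpha}^{\delta}\Vert\to 0$ as $i\to\infty$. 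Combining the two estimates through the triangle inequality yields the claimed Hadamard-type convergence.

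The main obstacle is synchronising the two limits: one must check that the a-posteriori choice $\alpha_{i}=1/i(\delta,v^{\delta})$ still lies in $\overline{S}\cap\underline{S}$ for the noise level under consideration (so that Theorem \ref{theorem_total_err} applies) while simultaneously obeying $\alpha_{i}<\alpha$ and the relaxation constraint from Theorem \ref{IterMinVSMin}. Standard regularization-theory bookkeeping, together with (\ref{regularization_strategy}) and Lemma \ref{lemma_regpar_lowerbnd}, shows that such a stopping index $i^{\ast}(\delta,v^{\delta})$ exists with $i^{\ast}\to\infty$ as $\delta\to 0$, completing the proof.
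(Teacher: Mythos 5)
Your argument is correct in outline, but it is a genuinely different route from the one the paper takes. You compose two previously established results through the triangle inequality $\Vert u_{i+1}-u^{\dagger}\Vert\leq\Vert u_{i+1}-u_{\alpha}^{\delta}\Vert+\Vert u_{\alpha}^{\delta}-u^{\dagger}\Vert$, invoking Theorem \ref{theorem_total_err} for the continuous error and Theorem \ref{IterMinVSMin} for the algorithmic error. The paper instead bypasses $u_{\alpha}^{\delta}$ entirely: it reruns the machinery of Theorem \ref{IterMinVSMin} directly against $u^{\dagger}$, applying the convexity identity (\ref{strct_cvx_eq}) to $\Vert u_{i+1}-u^{\dagger}\Vert^{2}$, then Property \ref{prox_update} to $\Vert\hat{u}_{i}^{J}-u^{\dagger}\Vert^{2}$, and absorbing the data-fidelity inner product $-2\mu\langle\hat{u}_{i}^{J}-u^{\dagger},T^{T}(Tu_{i}-v^{\delta})\rangle$ into a bound of the form $\Psi(\delta)+\frac{4}{\Vert T\Vert}\delta\Psi(\delta)$ via the noise model, the step-length condition and the variational source condition (\ref{variational_ineq}) applied at the iterate $\hat{u}_{i}^{J}$ --- which is precisely why the hypothesis $u_{i}\in BV(\Omega)$ appears in the statement. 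Your route is more modular and is in fact the decomposition the paper itself advertises at the start of Section \ref{iterative_convergence}; it also avoids needing the VSC at the iterates. What it gives up is a single quantitative bound: you obtain $\mathcal{O}(\Psi(\delta))$ only for the continuous part and merely qualitative linear convergence for the iterative part, and you must synchronise the two limits, i.e.\ exhibit a stopping index $i^{\ast}(\delta,v^{\delta})\to\infty$ for which $\alpha_{i^{\ast}}=1/i^{\ast}$ stays below the MDP parameter $\alpha(\delta,v^{\delta})\in\overline{S}\cap\underline{S}$ even as the latter itself tends to zero. You flag this but leave it at the level of ``standard bookkeeping''; since $\alpha(\delta,v^{\delta})$ is only characterised implicitly through (\ref{MDP1})--(\ref{MDP2}), a careful reader would want the growth condition $i^{\ast}(\delta,v^{\delta})>1/\alpha(\delta,v^{\delta})$ stated explicitly. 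Note also that the residual term $\lambda\mu\alpha\nu_{i}^{-1}\Vert\hat{w}_{i}^{J-1}-\hat{w}_{\alpha}^{\delta}\Vert$ in (\ref{primVSregmin4}) vanishes only as $\alpha\to 0$, so the algorithmic error in your first summand genuinely requires the coupled limit $\delta\to 0$, not $i\to\infty$ alone; your statement of the conclusion is consistent with this, but it is worth making explicit.
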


\begin{proof}
As has been done above, we begin with
applying the equality in 
(\ref{strct_cvx_eq}),
\begin{eqnarray}
\label{strct_cvx_eq3}
\Vert u_{i+1} - u^{\dagger}\Vert^{2} = 
(1 - \lambda)\Vert u_{i} - u^{\dagger}\Vert^2 + 
\lambda\Vert\hat{u}_{i}^{J} - u^{\dagger}\Vert 
-\lambda(1 - \lambda)\Vert u_i - \hat{u}_{i}^{J}\Vert^2.
\end{eqnarray}
Analogously, we investigate the boundedness of the term
$\Vert\hat{u}_{i}^{J} - u^{\dagger}\Vert$ by using the 
Property~\ref{prox_update},
\begin{eqnarray}
\Vert\hat{u}_{i}^{J} - u^{\dagger}\Vert^2 \leq 
\Vert u_{i} - u^{\dagger}\Vert - 
\Vert\hat{u}_{i}^{J} - u_{i}\Vert^2 
& - &2\mu\langle \hat{u}_{i}^{J} - u^{\dagger} , 
T^{T}(T u_{i} - v^{\delta}) \rangle
\nonumber\\
& - &2\mu\alpha_{i}\langle \hat{u}_{i}^{J} - u^{\dagger} , 
D^{T}(\hat{w}_{i}^{J} - \hat{w}_{0}^{J}) \rangle .
\label{est_IterMinVS_Exact}
\end{eqnarray}
By means of the deterministic noise model 
$\Vert v^{\dagger} - v^{\delta}\Vert\leq\delta,$ 
the condition on the step-length
$\mu \leq \frac{2}{ \Vert T \Vert^2},$ and lastly
Assumption \ref{assump_conventional_variational_ineq} 
for the term $\hat{u}_{i}^{J}$ since by definition 
$\hat{u}_{i}^{J} \in BV(\Omega),$
the first inner product on the right hand side 
can be bounded as follows,
\begin{eqnarray}
- 2\mu \langle \hat{u}_{i}^{J} - u^{\dagger} , 
T^{T}(T u_{i} - v^{\delta})\rangle & = & -2\mu
\langle \hat{u}_{i}^{J} - u^{\dagger} , 
T^{T}(T u_{i} - T u^{\dagger}) \rangle
-2\mu\langle \hat{u}_{i}^{J} - u^{\dagger} , 
T^{T}(T u^{\dagger} -v^{\delta}) \rangle 
\nonumber\\
& = &-2\mu \langle \hat{u}_{i}^{J} - u_{i} , 
T^{T}(T u_{i} - Tu^{\dagger}) \rangle
-2\mu\langle u_{i} - u^{\dagger} , 
T^{T}(Tu_{i} - Tu^{\dagger}) \rangle 
\nonumber\\
& - & 2\mu\langle \hat{u}_{i}^{J} - u^{\dagger} , 
T^{T}(T u^{\dagger} -v^{\delta}) \rangle
\nonumber\\
&\leq &2\mu\Vert T\Vert^2\Vert\hat{u}_{i}^{J} - u^{\dagger}\Vert 
\Vert u_{i} - u^{\dagger}\Vert -\mu\Vert Tu_{i} - Tu^{\dagger}\Vert^2
\nonumber\\
& + & 2\delta\mu\Vert T\Vert\Vert\hat{u}_{i}^{J} - u^{\dagger}\Vert
\nonumber\\
& \leq &\Psi(\delta) + \frac{4}{\Vert T \Vert}\delta\Psi(\delta),
\end{eqnarray}
where we have dropped the negative term 
$-\mu\Vert Tu_{i} - Tu^{\dagger}\Vert^2.$

At this point, it is rather redundant to bound the second
inner product on the right hand side of (\ref{est_IterMinVS_Exact})
since similar bound has been found in the proof of 
Theorem~\ref{IterMinVSMin}. Eventually, after inserting
all the expected bounds into the equality (\ref{strct_cvx_eq3}),
the term $\Vert u_{i+1} - u^{\dagger}\Vert$
will be bounded in terms of the index function $\Psi(\delta)$
since $u_{i} \in BV(\Omega).$
\end{proof}

\begin{remark}
Note that unlike in the continuous analysis, for the development
of the iterative analysis, we have not imposed any initial guess
for the iterative procedure. We have completely relied on the
choice of the parameters.
\end{remark}

%%%%%%%%%%%%%%%%%%%%%%%%%%%%%%%%%%%%%%%%%
% Numerics
%%%%%%%%%%%%%%%%%%%%%%%%%%%%%%%%%%%%%%%%%
\section{Numerical Results}
In this section, we will develop numerical results 
with sufficiently low amount of noise. 
The decay on the iterative error estimations
will be displayed both on the image and the pre-image spaces.

The algorithm will be applied for solving a simple 2-D image processing problem. In the first test, the simulated 
measurement data is calculated by some sinogram projections 
of a rank deficient forward operator, 
Figure \ref{2D_star_W_sinogram}. 
In the second test, a full-rank forward operator
is applied on the same test image to calculate the sinogram
measurements, Figure \ref{2D_star_W_sinogram2}. 
The number of the iterations has been defined manually.
Numerical results of each test are profiled in Figure
\ref{2D_star_conv} and Figure \ref{2D_star_conv2}. We also
observe the behaviour of the error estimation with an 
increased amount of noise added on the measurement, Figure 
\ref{2D_star_W_sinogram3} and Figure~\ref{2D_star_conv3}.

%%%%%%%%%%%%%%%%%%%%%%%%%%%%%%%%%%%%%%%%%%%%%%%%%%%%%%%%%%%

\begin{figure}[!tbp]
\includegraphics[height=4in,width=7.5in,angle=0]
{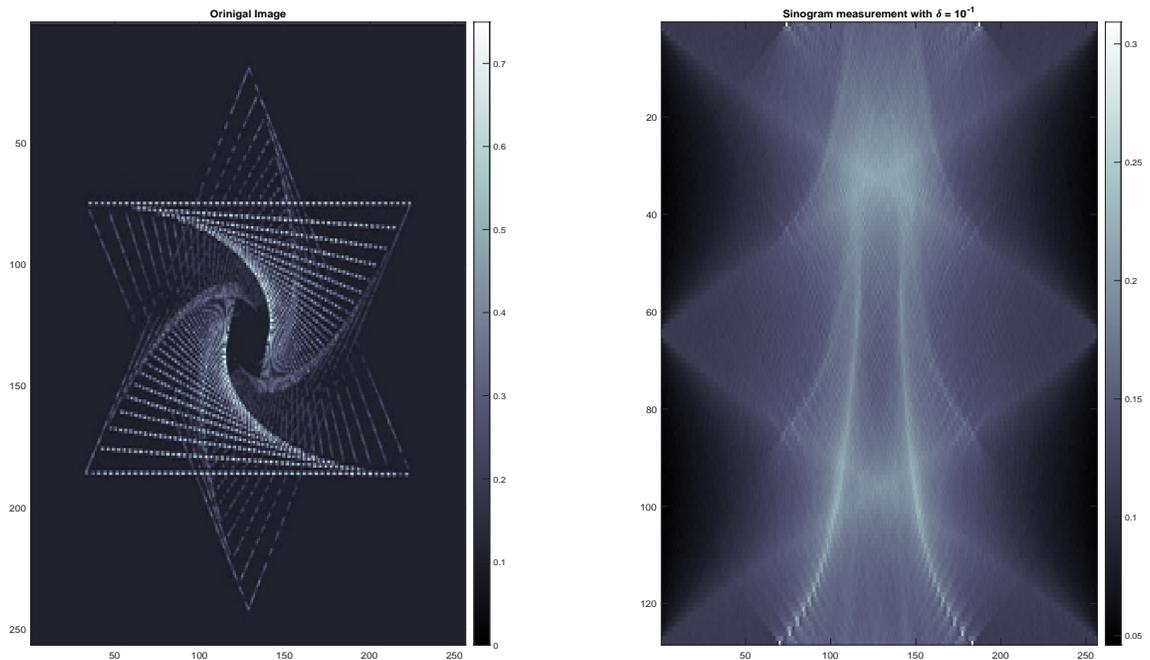}
\caption{Data visualization:}
{\footnotesize Original image and its noisy sinogram 
measurement with $\delta = 0.1 \%$ from a rank deficient 
forward operator with dimensions $32768 \times 65536$.}
\label{2D_star_W_sinogram}
\end{figure}

\begin{figure}[!tbp]
\includegraphics[height=4in,width=7.5in,angle=0]
{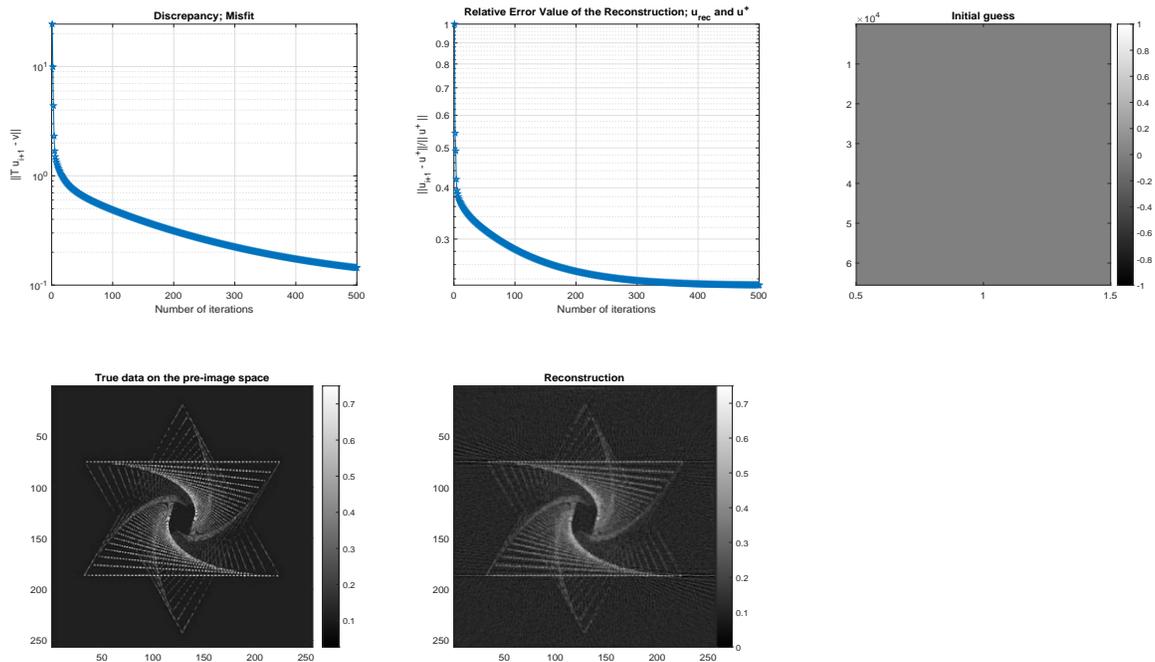}
\caption{Error analysis profiles and data visualization 
with $\delta = 0.1 \%$ and matrix with the dimension 
of $32768 \times 65536$:}
{\footnotesize Numerical results of the algorithm after it is
applied on the underdetermined problem. In the reconstruction, 
we observe some artifacts as the traces of the lines that result
from insufficient number of measurements.
However, edges of the image are preserved due to
TV penalization on the reconstruction. Relative error value of the 
final reconstruction falls below $0.3$.}
\label{2D_star_conv}
\end{figure}

%%%%%%%%%%%%%%%%%%%%%%%%%%%%%%%%%%%%%%%%%%%%%%%%%%%%%%%%%%%%

\begin{figure}[!tbp]
\includegraphics[height=4in,width=7.5in,angle=0]
{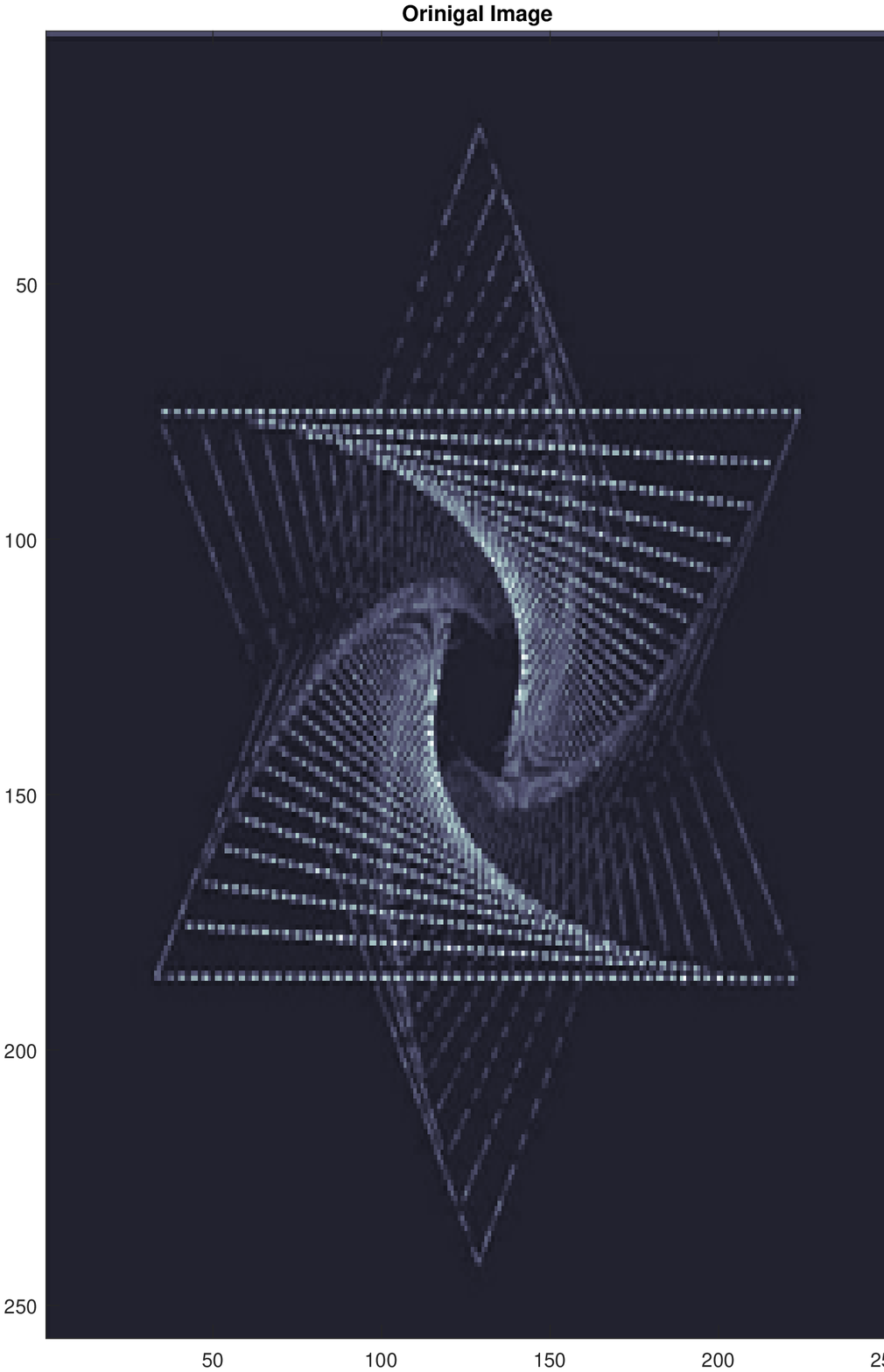}
\caption{Data visualization:}
{\footnotesize Original image and its noisy sinogram 
measurement with $\delta = 0.1 \%$ from a full rank 
forward operator with the dimension 
of $65536 \times 65536$.}
\label{2D_star_W_sinogram2}
\end{figure}

\begin{figure}[!tbp]
\includegraphics[height=4in,width=7.5in,angle=0]
{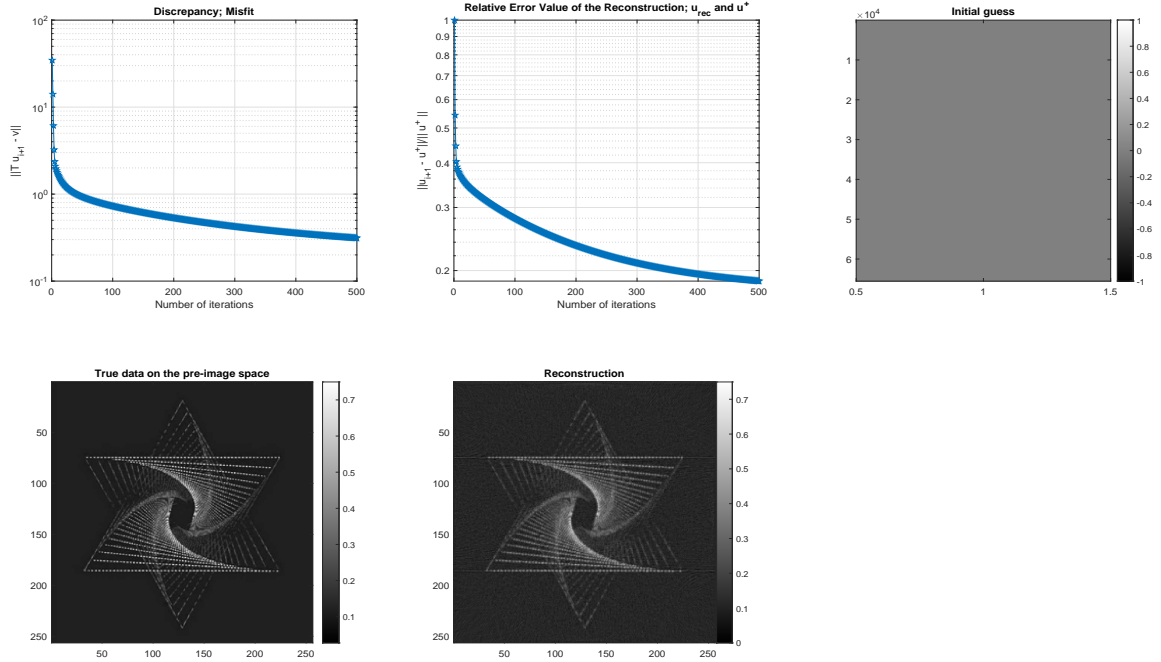}
\caption{Error analysis profiles and data visualization with 
$\delta = 0.1 \%$ from a full-rank forward operator 
with dimensions $65536 \times 65536$:}
{\footnotesize Unlike in the 
underdetermined system in Figure \ref{2D_star_conv}, 
relative error value
of the final reconstruction falls below $0.2$. Also artifacts
are less visible, and again edge preservation by TV penalization 
is observed.}
\label{2D_star_conv2}
\end{figure}

%%%%%%%%%%%%%%%%%%%%%%%%%%%%%%%%%%%%%%%%%%%%%%%%%%%%%%%%%%%%%

\begin{figure}[!tbp]
\includegraphics[height=4in,width=7.5in,angle=0]
{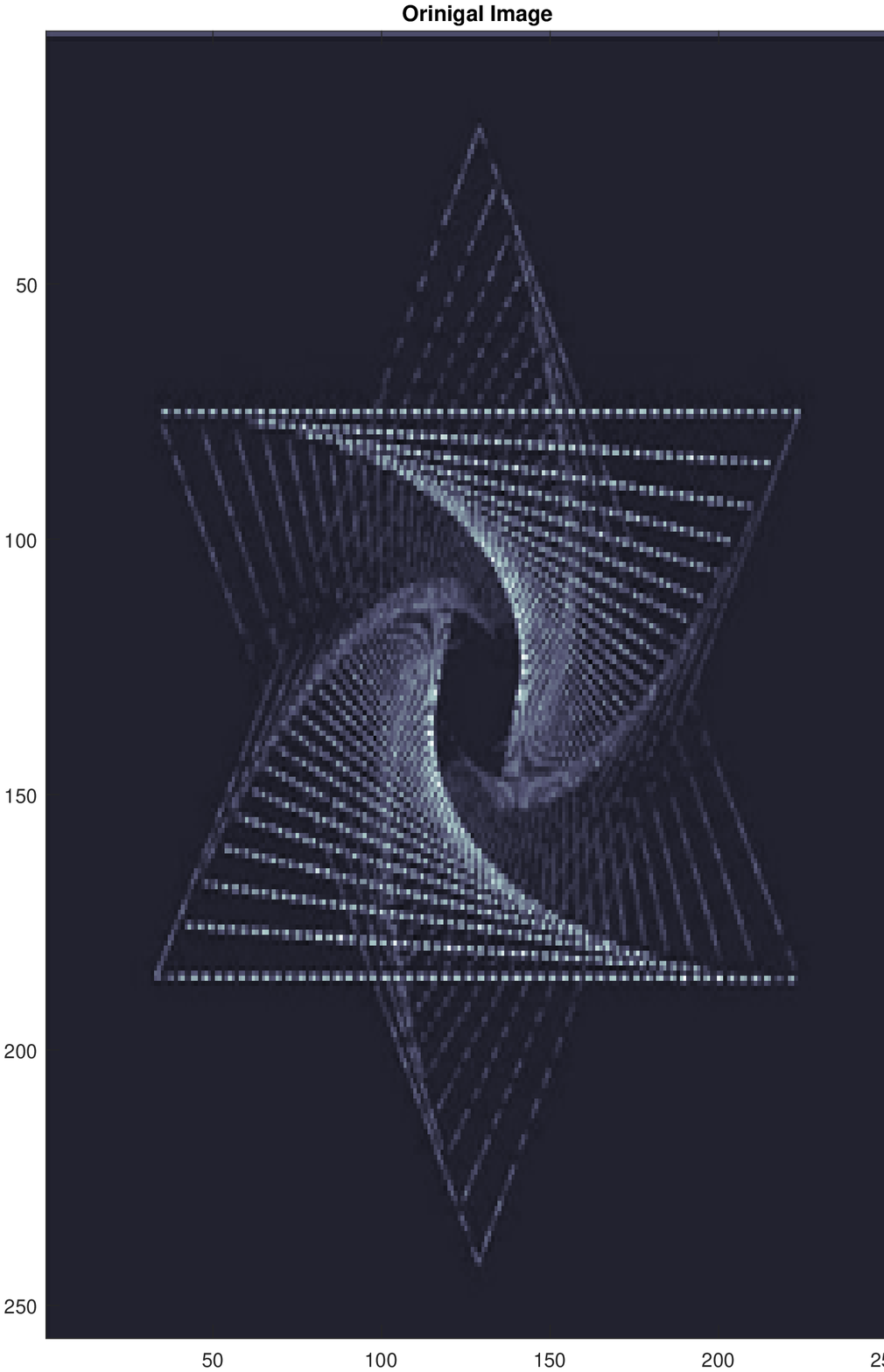}
\caption{Data visualization:}
{\footnotesize Original image and its noisy sinogram 
measurement with $\delta = 1 \% $ from a full-rank 
matrix with dimensions $65536 \times 65536$.}
\label{2D_star_W_sinogram3}
\end{figure}

\begin{figure}[!tbp]
\includegraphics[height=4in,width=7.5in,angle=0]
{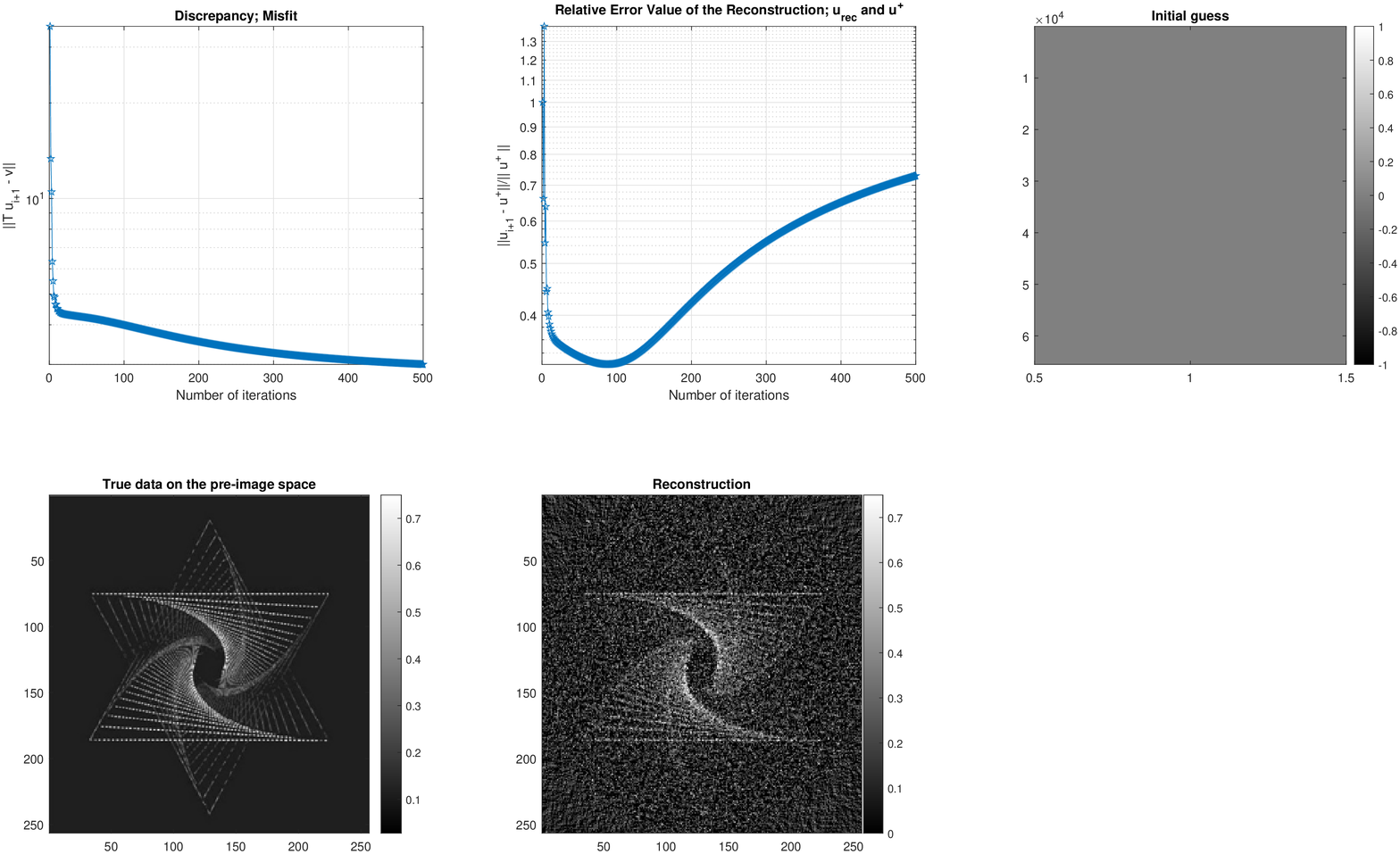}
\caption{Error analysis profiles and data visualization:}
{\footnotesize Numerical results of the algorithm  
for a measurement with
$\delta = 1\% $ from a full-rank 
forward operator with dimensions 
of $65536 \times 65536$. Since the noise level is high,
the relative error estimation 
of the reconstructed
data does not decay. That is consistent with the 
noise level observed in the reconstruction.}
\label{2D_star_conv3}
\end{figure}

%%%%%%%%%%%%%%%%%%%%%%%%%%%%%%%%%%%%%%%%%%%%%%%%%%%%%%%%%%%%%

%%%%%%%%%%%%%%%%%%%%%%%%%%%%%%%%%%%%%%%%%%%%
% Development of the Primal Dual Algorithm
%%%%%%%%%%%%%%%%%%%%%%%%%%%%%%%%%%%%%%%%%%%%
\section{Discussion and Future Prospects}
\label{conclusion}

This work proposes some iterative regularization method evolved
from some conventional optimization algorithm. We have observed that
the choices of the parameters serve
for the convergence analysis both in the Hadamard sense and
in the iterative procedure sense. Furthermore,
we must emphasize that the theoretical development for the stability
analysis in the continous mathematical setting, \textit{i.e.} the Hadamard
convergence of the non-iterative regularized solution 
$u_{\alpha}^{\delta}$ 
to the minimum norm solution, is based on how the initial guess 
$u^{0}$ is chosen. We have been committed to make this selection 
due to mathematical
difficulties appearing in the derivation of the 
Lemma \ref{lemma_regpar_lowerbnd}, in particular on the inner products
in the proof. However, it is our belief that with a different 
non-smooth analysis such difficulties can be overcome. Much of our theoretical
results in the continuous setting
are comparable with its counterparts available in the literature
of variational regularization, see \textit{e.g.} \textbf{\cite{HofmannMathe12}}.

Regarding the stability analysis of the iterative approximation $u_{i},$
we have not needed to make any assumption on the initial guess $u_{0}.$
Instead, we have observed in Theorem \ref{IterMinVSMin}
that initialization of the inner loop, \textit{i.e.}
the initial guess for the dual variable, is important for 
stating that
$u_{i}$ is the approximation of $u_{\alpha}^{\delta}.$ 
Furthermore, the dynamical Bregman iterative form is maintained 
kby updating the dual variables.

Although we have used Morozov`s discrepancy principle, 
further work could be done on the investigation of some Lepskij 
type stopping rule. Then a new convergence
scheme is required for the iteratively regularized approximation 
$u_{i}$ towards the minimum norm solution.

In the numerical results, we have profiled the stability of the algorithm
depending on the number of the measurements, see Figures 
\ref{2D_star_W_sinogram} and \ref{2D_star_conv}. 
An open question still remains about the effect on recontruction
of the angular arrangment of ray measurement.

%===========
% APPENDIX
%===========
\newpage
\begin{appendices}
\chapter{\textbf{APPENDIX}}
%===============================================================
\section{VSC as Upper Bound for the Bregman Distance}
\label{VSC2}

The total error estimation can also be stabilized 
due to the following assumption that has been
derived in the literature listed in Section \ref{data_smoothness},
\begin{eqnarray}
\label{total_error_bregman}
E(u_{\alpha}^{\delta} , u^{\dagger}) 
\leq D_{\mathcal{J}}(u_{\alpha}^{\delta} , u^{\dagger}).
\end{eqnarray}
Therefore, for stabilization of $E$, we seek a stable 
upper bound for the Bregman distance (\ref{total_error_bregman}).
\begin{assump}
\label{assump_conventional_variational_ineq2}
\textbf{[Variational Source Condition]}
Let $T : \mathcal{X} \rightarrow \mathcal{Y}$ be linear, injective 
forward operator and $v^{\dagger} \in \mathrm{range}(T).$ 
There exists some constant $\sigma \in (0 , 1]$
and a concave, monotonically increasing
index function $\Psi$ with $\Psi(0) = 0$ and 
$\Psi : [0 , \infty) \rightarrow [0 , \infty)$
such that for $q^{\dagger} \in \partial \mathcal{J}(u^{\dagger})$
the minimum norm solution $u^{\dagger}\in\mathrm{BV}(\Omega)$
satisfies
\begin{eqnarray}
\label{variational_ineq2}
\sigma D_{\mathcal{J}}(u,u^{\dagger}) \leq 
\mathcal{J}(u) - \mathcal{J}(u^{\dagger})
+ \Psi\left( \Vert T u - T u^{\dagger}\Vert \right)  
\mbox{, for all }
u \in \mathcal{X} .
\end{eqnarray} 
\end{assump}
Recall that quantitative stability analysis in the continuous 
mathematical setting aims to find upper bound for the total 
error estimation functional $E$ in 
(\ref{total_err_est}). According to (\ref{total_error_bregman}), 
by means of finding stable upper bound for the Bregman distance
between the regularized minimizer $u_{\alpha}^{\delta}$ and
the minimum norm solution $u^{\dagger}$ will yield one of the 
two convergence results of this section.
With the established choice of the regularization parameter and
the asserted $\mathcal{J}$ difference estiamtion in Lemma 
\ref{lemma_J_diff},
the last ingredient of the Bregman distance following up the 
Assumption \ref{assump_conventional_variational_ineq2}
is formulated below.
\begin{lemma}
\label{lemma_BregmanInnerProd}
Let $\alpha(\delta,v^{\delta})\in\overline{S}\cap\underline{S}$
be the regularization parameter for the regularized solution
$u_{\alpha}^{\delta}$ to the problem (\ref{problem0}).
If the minimum norm solution $u^{\dagger}$ satisfies 
Assumption \ref{assump_conventional_variational_ineq2}, 
then
\begin{displaymath}
- \langle D^{\ast}w^{\dagger},u_{\alpha}^{\delta} - u^{\dagger} \rangle
= \mathcal{O}(\Psi(\delta)),
\end{displaymath}
holds.
\end{lemma}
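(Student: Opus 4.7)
The approach rests on expressing the linear functional $-\langle D^{\ast}w^{\dagger},u_{\alpha}^{\delta} - u^{\dagger}\rangle$ in terms of quantities whose order in $\delta$ has already been controlled in the preceding lemmata. Noting that the subgradient decomposition (\ref{subdiff_TV}) gives $q^{\dagger}= D^{\ast}w^{\dagger} \in \partial\mathcal{J}(u^{\dagger})$, the definition of the Bregman distance (\ref{bregman_divergence_intro}) yields the exact identity
\begin{eqnarray}
-\langle D^{\ast}w^{\dagger},u_{\alpha}^{\delta} - u^{\dagger}\rangle
= D_{\mathcal{J}}(u_{\alpha}^{\delta},u^{\dagger}) - \bigl(\mathcal{J}(u_{\alpha}^{\delta}) - \mathcal{J}(u^{\dagger})\bigr).
\nonumber
\end{eqnarray}
Thus the proof reduces to controlling the two scalars on the right-hand side separately and showing that each is of order $\Psi(\delta)$.

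The second term is already handled by the corollary following Lemma \ref{lemma_J_diff}, which gives $|\mathcal{J}(u_{\alpha}^{\delta}) - \mathcal{J}(u^{\dagger})| = \mathcal{O}(\Psi(\delta))$. For the first term, the plan is to invoke Assumption \ref{assump_conventional_variational_ineq2} directly: it provides
\begin{eqnarray}
\sigma D_{\mathcal{J}}(u_{\alpha}^{\delta},u^{\dagger})
\leq \mathcal{J}(u_{\alpha}^{\delta}) - \mathcal{J}(u^{\dagger}) + \Psi\bigl(\Vert Tu_{\alpha}^{\delta} - Tu^{\dagger}\Vert\bigr).
\nonumber
\end{eqnarray}
Then I would substitute (\ref{J-diff2}) on the first summand and, using the a-posteriori choice $\alpha\in\overline{S}\cap\underline{S}$ together with (\ref{consequence_MDP}) and the concavity estimate (\ref{concave_id}), bound the second summand by $(\overline{\tau}+1)\Psi(\delta)$. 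This yields
\begin{eqnarray}
D_{\mathcal{J}}(u_{\alpha}^{\delta},u^{\dagger}) \leq \frac{1}{\sigma}\left(\frac{1}{\underline{\tau}-1} + \overline{\tau}+1\right)\Psi(\delta),
\nonumber
\end{eqnarray}
which is the desired $\mathcal{O}(\Psi(\delta))$ bound on the Bregman distance.

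Combining the two estimates through the triangle inequality applied to the identity above produces the assertion. The only delicate point, and the main obstacle if one wished to sharpen this, is that the VSC in Assumption \ref{assump_conventional_variational_ineq2} must hold with the same index function $\Psi$ used throughout; under that compatibility the proof is essentially a bookkeeping chain, and no new analytic ingredient beyond the MDP consequences and Lemma \ref{lemma_J_diff} is required.
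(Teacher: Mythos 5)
Your proposal is correct and follows essentially the same route as the paper: both apply the Bregman-form VSC at $u=u_{\alpha}^{\delta}$, bound the $\mathcal{J}$-difference via Lemma \ref{lemma_J_diff}, and handle $\Psi(\Vert Tu_{\alpha}^{\delta}-Tu^{\dagger}\Vert)$ with the MDP consequence and concavity; the only difference is that you first isolate $D_{\mathcal{J}}(u_{\alpha}^{\delta},u^{\dagger})$ and bound it, whereas the paper rearranges the VSC inequality to isolate the inner product directly. Both are the same bookkeeping in a different order.
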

\begin{proof}
It follows from VSC (\ref{variational_ineq}) that
\begin{eqnarray}
\frac{\sigma}{2}\left(\mathcal{J}(u_{\alpha}^{\delta}) - 
\mathcal{J}(u^{\dagger}) - 
\langle D^{\ast}w^{\dagger},u_{\alpha}^{\delta} - u^{\dagger}\rangle\right)
\leq \mathcal{J}(u_{\alpha}^{\delta}) - \mathcal{J}(u^{\dagger}) + 
\Psi(\Vert T u_{\alpha}^{\delta} - T u^{\dagger}\Vert).
\end{eqnarray} 
After arranging the terms,
\begin{eqnarray}
-\langle D^{\ast}w^{\dagger},u_{\alpha}^{\delta}-u^{\dagger}\rangle 
& \leq & \frac{2}{\sigma}\left(1 - \frac{\sigma}{2}\right)
\left(\mathcal{J}(u_{\alpha}^{\delta}) - \mathcal{J}(u^{\dagger}) \right)
+ \Psi(\Vert T u_{\alpha}^{\delta} - T u^{\dagger}\Vert)
\nonumber\\
& \stackrel{(\ref{J-diff2})}{\leq} &
\frac{2}{\sigma}\left(1 - \frac{\sigma}{2}\right)
\left(\frac{1}{\underline{\tau}-1}\right)\Psi(\delta) + 
\Psi(\Vert T u_{\alpha}^{\delta} - T u^{\dagger}\Vert)
\nonumber\\
& \stackrel{(\ref{consequence_MDP})}{\leq} &
\frac{2}{\sigma}\left(1 - \frac{\sigma}{2}\right)
\left(\frac{1}{\underline{\tau}-1}\right)\Psi(\delta) +
\Psi((\overline{\tau} + 1)\delta)
\nonumber\\
& \stackrel{(\ref{concave_id})}{\leq} &
\frac{2}{\sigma}\left(1 - \frac{\sigma}{2}\right)
\left(\frac{1}{\underline{\tau}-1}\right)\Psi(\delta) +
(\overline{\tau} + 1)\Psi(\delta)
\nonumber\\
& = &\left(\frac{2}{\sigma} -1\right)
\left(\frac{1}{\underline{\tau}-1}\right)\Psi(\delta) +
(\overline{\tau} + 1)\Psi(\delta).
\label{VSC_innerprod}
\end{eqnarray}
\end{proof}

\begin{theorem}
\label{theorem_total_err2}
Let the minimum norm solution $u^{\dagger}\in\Omega$
satisfy the VSC given by Assumption
 \ref{assump_conventional_variational_ineq2}. 
Then, in the light of the assumptions of lemmata \ref{lemma_regpar_lowerbnd}, \ref{lemma_J_diff} and finally 
\ref{lemma_BregmanInnerProd}, the following estimation holds,
\begin{displaymath}
D_{\mathcal{J}}(u_{\alpha}^{\delta} , u^{\dagger}) = 
\mathcal{O}(\Psi(\delta)).
\end{displaymath}
\end{theorem}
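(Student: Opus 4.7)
The plan is to expand $D_{\mathcal{J}}(u_{\alpha}^{\delta}, u^{\dagger})$ directly from its definition and then absorb each constituent piece into the $\mathcal{O}(\Psi(\delta))$ bound using the previously established lemmata. Since we are working in the composite setting with $\mathcal{J}(u) = g(Du)$ and $\partial \mathcal{J}(u^{\dagger}) = D^{\ast}\partial g(Du^{\dagger})$, a subgradient $q^{\dagger} \in \partial \mathcal{J}(u^{\dagger})$ can be written as $q^{\dagger} = D^{\ast}w^{\dagger}$ for some $w^{\dagger} \in \partial g(Du^{\dagger})$, matching exactly the inner product that appears in Lemma \ref{lemma_BregmanInnerProd}.

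First I would write
\begin{eqnarray}
D_{\mathcal{J}}(u_{\alpha}^{\delta}, u^{\dagger}) = \bigl(\mathcal{J}(u_{\alpha}^{\delta}) - \mathcal{J}(u^{\dagger})\bigr) - \langle D^{\ast}w^{\dagger}, u_{\alpha}^{\delta} - u^{\dagger}\rangle.
\nonumber
\end{eqnarray}
Next I would invoke Lemma \ref{lemma_J_diff} to bound the first bracket by $\frac{1}{\underline{\tau}-1}\Psi(\delta)$, which is valid because the hypotheses of that lemma (the a-posteriori choice $\alpha(\delta, v^{\delta}) \in \overline{S}\cap\underline{S}$ and constant initial guess $u^{0}$) are inherited here. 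Then I would invoke Lemma \ref{lemma_BregmanInnerProd}, whose statement gives exactly $-\langle D^{\ast}w^{\dagger}, u_{\alpha}^{\delta} - u^{\dagger}\rangle \leq \bigl(\tfrac{2}{\sigma}-1\bigr)\tfrac{1}{\underline{\tau}-1}\Psi(\delta) + (\overline{\tau}+1)\Psi(\delta)$, using VSC \eqref{variational_ineq2}.

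Finally I would sum the two bounds:
\begin{eqnarray}
D_{\mathcal{J}}(u_{\alpha}^{\delta}, u^{\dagger}) \leq \left(\frac{1}{\underline{\tau}-1} + \left(\frac{2}{\sigma}-1\right)\frac{1}{\underline{\tau}-1} + (\overline{\tau}+1)\right)\Psi(\delta) = \mathcal{O}(\Psi(\delta)),
\nonumber
\end{eqnarray}
with a constant depending only on the fixed discrepancy radii $\underline{\tau}, \overline{\tau}$ and the VSC coefficient $\sigma$. There is no substantive obstacle: the entire work has already been done in the preceding lemmata, and the role of this theorem is essentially bookkeeping — combining the two pieces in the definition of $D_{\mathcal{J}}$. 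The only point that needs a moment of care is checking that the subgradient $q^{\dagger}$ entering the definition of $D_{\mathcal{J}}$ is indeed of the form $D^{\ast}w^{\dagger}$ so that Lemma \ref{lemma_BregmanInnerProd} applies verbatim, which follows from the composite structure $J = g \circ D$ recorded in \eqref{composite_TV}–\eqref{subdiff_TV}.
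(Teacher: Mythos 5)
Your proposal is correct and follows essentially the same route as the paper's own proof: expand $D_{\mathcal{J}}(u_{\alpha}^{\delta},u^{\dagger})$ by definition, bound the $\mathcal{J}$-difference via Lemma \ref{lemma_J_diff} and the inner product via Lemma \ref{lemma_BregmanInnerProd}, and sum, arriving at the identical constant $\left(\frac{1}{\underline{\tau}-1}\right) + \left(\frac{2}{\sigma}-1\right)\left(\frac{1}{\underline{\tau}-1}\right) + (\overline{\tau}+1)$. Your extra remark on writing $q^{\dagger}=D^{\ast}w^{\dagger}$ via the composite structure \eqref{subdiff_TV} is a point the paper leaves implicit.
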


\begin{proof}
The proof simply follows from verifying the previously established estimations
on each components of the Bregman distance as shown below,
\begin{eqnarray}
D_{\mathcal{J}}(u_{\alpha}^{\delta} , u^{\dagger}) & = &
\mathcal{J}(u_{\alpha}^{\delta}) - \mathcal{J}(u^{\dagger})- 
\langle D^{\ast}w^{\dagger},u_{\alpha}^{\delta} - u^{\dagger} \rangle
\nonumber\\
&\stackrel{(\ref{VSC_innerprod})}{\leq}&
\mathcal{J}(u_{\alpha}^{\delta}) - \mathcal{J}(u^{\dagger}) +
\left(\frac{2}{\sigma} -1\right)
\left(\frac{1}{\underline{\tau}-1}\right)\Psi(\delta) +
(\overline{\tau} + 1)\Psi(\delta)
\nonumber\\
&\stackrel{(\ref{J-diff2})}{\leq}&
\left(\frac{1}{\underline{\tau}-1}\right)\Psi(\delta)+
\left(\frac{2}{\sigma} -1\right)
\left(\frac{1}{\underline{\tau}-1}\right)\Psi(\delta) +
(\overline{\tau} + 1)\Psi(\delta).
\end{eqnarray}
\end{proof}

\section{Further Numerical Results}

In this section, we will present some further numerical results
to emphasize the condition on the step-length $\mu$ 
in Theorem \ref{IterMinVSMin}. Although the formulated condition
allows one to choose the step-length as $\mu = \frac{2}{\Vert T \Vert^2},$
we have observed divergence when we have made this choice of $\mu,$
see Figure \ref{different_mu}.

\begin{figure}[!tbp]
\includegraphics[height=4in,width=7.5in,angle=0]
{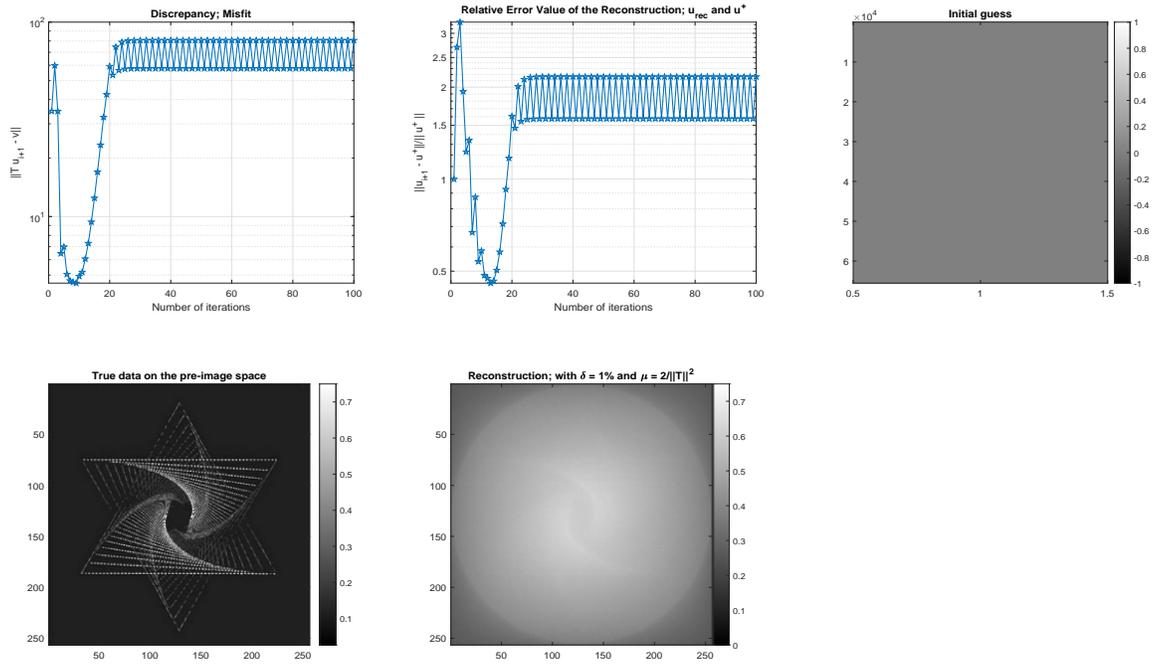}
\caption{Error analysis profiles and the data visualization:}
{\footnotesize Although the noise amount is sufficiently small 
and it is a full rank system, with some different choice of
the step-length that is defined as $\mu = \frac{2}{\Vert T\Vert^2},$
insufficient reconstruction and instability have been observed.}
\label{different_mu}
\end{figure}

\end{appendices}

%%%%%%%%%%%%%%%%%%%%%%%%%%%%%%%%%%%%%%%%%
% acknowledgement
%%%%%%%%%%%%%%%%%%%%%%%%%%%%%%%%%%%%%%%%%
\section{Acknowledgement}
The author is indebted to Ignace Loris for the fruitful discussions 
throughout the development of the work. Furthermore, the author
is highly grateful to Maria A. Gonzalez-Huici and David 
Mateos-Nunez for the encouragement and support to finalize the work. 
The work has been initiated by ARC grant at Universit\'e Libre de 
Bruxelles during author`s PostDoc research period 2017 - 2019.

%===========================================================================
% References
%===========================================================================
\newpage
\bigskip
\section*{References}

%\bibliography{../../nfg-ebib/db/ebib}
%\bibliographystyle{annotate}

 \bibliographystyle{alpha}
 % \bibliography{alpha}

\begin{thebibliography}{99}
  
  \bibitem{AcarVogel94} R. Acar, C. R. Vogel. {\em Analysis of bounded variation penalty methods
   for ill-posed problems,} Inverse Problems, Vol. 10, No. 6, 1217 - 1229, 1994.
   
   \bibitem{Altuntac16} E. Altuntac. 
   {\em Variational Regularization Strategy 
   for Atmospheric Tomography.} Institute for 
   Numerical and Applied Mathematics,
    University of Goettingen, July 2016.
    
    \bibitem{Ambrosio00} L. Ambrosio, N. Fusco, and D. Pallara. 
   {\em Functions of bounded variation and free discontinuity problems.} 
   Oxford Mathematical Monographs. The Clarendon Press, Oxford University Press, New York, 2000. 
   
   \bibitem{AnzengruberHofmannMathe14} S. Anzengruber, B. Hofmann and 
   P. Math\'{e}.
   {\em Regularization properties of the sequential discrepancy principle for 
   Tikhonov regularization in Banach spaces.} Appl. Anal. 93, 7, 1382 -1400, 2014.
   
   \bibitem{AnzengruberRamlau10} S. Anzengruber and R. Ramlau.
   {\em Morozov's discrepancy principle for Tikhonov-type functionals with nonlinear operators.}
   Inverse Problems, 26, 025001 (17pp), 2010.   
   
   \bibitem{AnzengruberRamlau11} S. Anzengruber and R. Ramlau.
   {\em Convergence rates for Morozov's discrepancy principle using variational inequalities.}
   Inverse Problems, 27, 105007 (18pp), 2011.

   \bibitem{BachmayrBurger09} M. Bachmayr and M. Burger. {\em Iterative total variation
  schemes for nonlinear inverse problems,} Inverse Problems, 25, 105004 (26pp), 2009.
  
  \bibitem{BauschkeCombettes11} H. H. Bauschke, P. L. Combettes. 
  {\em Convex analysis and monotone operator theory in Hilbert spaces,} Springer New York, 2011.

  \bibitem{BardsleyLuttman09} J. M. Bardsley and A. Luttman. {\em Total variation-penalized
  Poisson liklehood estimation for ill-posed problems,} Adv. Comput. Math., 31:25-59, 2009.
  
  \bibitem{BeckTeboulle09} A.Beck and M.Teboulle. {\em A Fast Iterative 
  Shrinkage-Thresholding Algorithm for Linear Inverse Problems}. SIAM
  J. Imaging Sciences, Vol. 2, No. 1, pp. 183-202, 2009.
  
   \bibitem{BenningBurger17} M. Benning and M. Burger. 
   {\em Modern Regularization Methods for Inverse Problems.} 
   https://arxiv.org/pdf/1801.09922, 2017.
  
  \bibitem{BenningSchoenlib17} 
  M. Benning, M. M. Betcke, M. J. Ehrhardt, C.-B. Sch\"{o}nlieb. 
  {\em Choose your path wisely: gradient descent in a Bregman 
  distance framework}, arXiv:1712.04045, 2017.
  
  \bibitem{Bergounioux11} M. Bergounioux. 
  {\em On Poincar\'{e}-Wirtinger inequalities in space of functions 
  bounded variation.} Control Cybernet., 40, 4, 921-29, 2011.
  
  \bibitem{BoneskyKazimierskiMaass08} T. Bonesky, K. S. Kazimierski, P. Maass, 
  F. Sch\"{o}pfer, T. Schuster. {\em Minimization of Tikhonov Functionals in Banach Spaces}, Abstr. Appl. Anal.,
  Art. ID 192679, 19 pp, 2008.
  
  \bibitem{BonettiniLoris17} S. Bonettini, I. Loris, 
  F. Porta, M. Prato and S. Rebegoldi. {\em On the 
  convergence of a linesearch based proimal-gradient
  method for nonconvex optimization.} Inverse Problems, 
  33, 055005 (30pp), 2017. 
  
  \bibitem{BorweinLuke11} J. Borwein and R. Luke. {\em Entropic Regularization of the 
  $\ell_0$ Function}. In: Bauschke H., Burachik R., Combettes P., Elser V., 
  Luke D., Wolkowicz H. (eds) Fixed-Point Algorithms for Inverse Problems in Science 
  and Engineering. Springer Optimization and Its Applications, vol 49. Springer, 
  New York, NY, 2011.
  
  \bibitem{Bredies09} K. Bredies. {\em A forward-backward splitting algorithm for the minimization 
  of non-smooth convex functionals in Banach space},  Inverse Problems 25, no. 1, 015005, 20 pp, 2009.

  \bibitem{BurgerOsher04} M. Burger, S. Osher. {\em Convergence rates of convex variational regularization,}
  Inverse Problems, 20(5), 1411 - 1421, 2004.
  
  \bibitem{Chambolle04} A. Chambolle. {\em An algorithm for Total Variation
  Minimization and Applications.} J. Math. Imaging Vis., 20, 89 - 97, 2004.
  
  \bibitem{ChambolleCaselles09} A. Chambolle, Vicent Caselles, 
  Matteo Novaga, Daniel Cremers, Thomas Pock. {\em An introduction
  to Total Variation for Image Analysis.} $<$hal-00437581$>$, 2009.

  \bibitem{ChambolleLions97} A. Chambolle, P. L. Lions. {\em Image recovery via total variation minimization
   and related problems,} Numer. Math. 76, 167 - 188, 1997.
  
  \bibitem{ChanChen06} T. F. Chan and K. Chen. 
  {\em An optimization-based multilevel algorithm 
  for total variation image denoising,} Multiscale Model. Simul. 5, no. 2, 615-645, 2006.

  \bibitem{ChanGolubMulet99} T. Chan, G. Golub and P. Mulet. 
  {\em A nonlinear primal-dual method
  for total variation-baes image restoration,} 
  SIAM J. Sci. Comp 20: 1964-1977, 1999.
  
  \bibitem{ChenLoris18} J. Chen and I. Loris {\em On starting and stopping 
  criteria for nested primal-dual iterations}. arXiv:1806.07677, 2018.
  
  \bibitem{Clarke90} F. H. Clarke. 
  {\em Optimization and Nonsmooth Analysis.}
  Classics in Applied Mathematics, 5, SIAM, 1990.
  
  \bibitem{ColtonKress13} D. Colton and R. Kress. 
  {\em Inverse Acoustic and Electromagnetic 
  Scattering Theory,} Springer Verlag Series in 
  Applied Mathematics Vol. 93, Third Edition 2013.
  
  \bibitem{CombettesPesquet11} P.L. Combettes and JC. Pesquet. 
  {\em Proximal Splitting Methods 
  in Signal Processing.} In: Bauschke H., Burachik R., Combettes P., Elser V., Luke D.,
   Wolkowicz H. (eds) Fixed-Point Algorithms for Inverse Problems in Science and Engineering.
   Springer Optimization and Its Applications, vol 49. Springer, New York, NY, 2011.
  
  \bibitem{DefriseVanhoveLiu11} M. Defrise, C. Vanhove, and X. Liu. {\em An algorithm for
  total variation regularization in high-dimensional linear problems,} 
  Inverse Problems 27 065002 (16pp), 2011.

  \bibitem{DobsonScherzer96} D. Dobson, O. Scherzer. {\em Analysis of regularized total variation
  penalty methods for denoising}, Inverse Problems, Vol. 12, No. 5, 601 - 617, 1996.

  \bibitem{DobsonVogel97} D. C. Dobson, C. R. Vogel. {\em Convergence of an iterative method for 
  total variation denoising}, SIAM J. Numer. Anal., Vol. 34, No. 5, 1779 - 1791, 1997.
  
  \bibitem{Engl96} H. W. Engl, M. Hanke, A. Neubauer. {\em Regularization of 
  inverse problems,}
  Math. Appl., 375. Kluwer Academic Publishers Group, Dordrecht, 1996.
  
  \bibitem{Flemming18} J. Flemming. {\em Existence of variational source conditions for nonlinear inverse problems in Banach spaces.}
  	J. Inverse Ill-Posed Probl., 26, 2, 277 - 286, 2018.

  \bibitem{FowkesGouldFarmer13} J. M. Fowkes, N. I. M. Gould, C. L. Farmer. 
   {\em A branch and bound algorithm for the global optimization
    of Hessian Lipschitz continuous functions,} J. Glob. Optim., 56, 1792 - 1815, 2013.
    
    \bibitem{GarrigosVilla18} G. Garrigos, L. Rosasco and S. Villa.
    {\em Iterative regularization via dual diagonal descent.}
    J Math Imaging Vis, 60, 2, 189 - 215, 2018.

  \bibitem{Grasmair10} M. Grasmair. {\em Generalized Bregman distances and convergence rates
    for non-convex regularization methods}, Inverse Problems 26, 11, 115014, 16pp, 2010.
   
  \bibitem{Grasmair13} M. Grasmair. {\em Variational inequalities and higher order convergence rates for 
  Tikhonov regularisation on Banach spaces,} J. Inverse Ill-Posed Probl., 21, 379-394, 2013.
  
  \bibitem{GrasmairHaltmeierScherzer11} M. Grasmair, M. Haltmeier, O. Scherzer.
  {\em Necessary and sufficient conditions for linear convergence of $\l^1$-regularization,}
   Comm. Pure Appl. Math. 64(2), 161-182, 2011.
  
 \bibitem{HofmannScherzer07} B. Hofmann, B. Kaltenbacher, 
  C. P\``{o}schl, 
  and O. Scherzer. {\em A convergence rates result for Tikhonov 
  regularization in Banach spaces with non-smooth operator.} Inverse Problems,
  23 (3), 987 - 1010, 2007.
 
  \bibitem{HofmannMathe12} B. Hofmann and P. Math\'{e}. {\em Parameter choice in Banach space regularization
  under variational inequalities.} Inverse Problems 28, 104006 (17pp), 2012.
  
  \bibitem{HohageWeidling15} T. Hohage and F. Weidling. {\em Verification of a variational source condition
  for acoustic inverse medium scattering problems.} Inverse Problems, 31, 075006 (14pp), 2015.
  
  \bibitem{HohageSchormann98} T. Hohage and C. Schormann. {\em A Newton-type 
  method for a transmission problem in inverse scattering.} Inverse Problems
  14, 1207 - 1227, 1998.
  
  \bibitem{HohageWeidling16} T. Hohage and F. Weidling. 
  {\em Characterizations of variational source conditions, 
  converse results, and maxisets of spectral regularization methods.} 
  arXiv:1603.05133, 2016.
  
  \bibitem{Isakov06} V. Isakov. 
  {\em Inverse problems for partial differential equations.} Second edition.
   Applied Mathematical Sciences, 127. Springer, New York, 2006.
   
   \bibitem{Kindermann16} S. Kindermann. {\em Convex Tikhonov 
   regularization in Banach spaces: New results on convergence rates.}
   J. Inverse Ill-Posed Probl., 24, 3, 341 - 350, 2016.
   
   \bibitem{Kirsch11} A. Kirsch. 
   {\em An introduction to the mathematical theory of inverse problems.}
   Second edition. Applied Mathematical Sciences, 120. Springer, New York, 2011.
   
   \bibitem{Lorenz08} D. A. Lorenz. 
   {\em Convergence rates and source conditions for Tikhonov
  regularization with sparsity constraints,} 
  J. Inv. Ill-Posed Problems, 16, 463-478, 2008.
  
  \bibitem{LorisVerhoeven11} I. Loris and C. Verhoeven.
  {\em On a generalization of the iterative soft-thresholding algorithm for 
  the case of non-separable penalty}. Inverse Problems, 27, 125007, 15pp, 2011.
  
  \bibitem{LouchetMoisan11} C. Louchet and L. Moisan.
  {\em Total Variation as a Local Filter.}
  SIAM J. IMAGING SCIENCES, Vol. 4, No. 2, pp. 651 - 694, 
  2011.
  
  \bibitem{OsherBurger05} S. Osher, M. Burger, D. Goldfarb, 
  J. Xu and W. Yin. {\em An Iiterative Regularization Method 
  for Total Variation-Based Image Restoration.}
  Multiscale Model. Simul., 4(2), 460 - 489, 2005.
  
 
  \bibitem{RockafellarWets98} R.T. Rockafellar, R. J.-B. Wets. 
  {\em Variational Analysis.}
  Fundamental Principles of Mathematical Sciences, 317. 
  Springer-Verlag, Berlin, 1998.

  \bibitem{RudenOsherFatemi92} L. I. Rudin, S. J. Osher, E. Fatemi. 
  {\em Nonlinear total variation based noise removal algorithms,} 
  Physica D, 60, 259-268, 1992.
  
  \bibitem{ScherzerGrasmair09} O. Scherzer, M. Grasmair, H. Grossauer, M. Haltmeier and F. Lenzen.
  {\em Variational Methods in Imaging.} Applied Mathematical Sciences, 167, 
  Springer, New York, 2009.
  
%  \bibitem{SchroederSchuster15} U. Schr\"{o}der and T. Schuster. {\em An Iterative Method to Reconstruct the Refractive Index of
%  a Medium From Time-of-Flight Measurements.} arXiv:1510.06899v1 , 2015.
  
  \bibitem{Schuster12} T. Schuster, B. Kaltenbacher, B. Hofmann, 
  K.S. Kazimierski. {\em Regularization Methods in Banach Spaces.}
  RICAM, 10, De Gruyter, 2012.
  
  \bibitem{ShannoPhua78} D. F. Shanno and K. Phua. 
  {\em Matrix conditioning and nonlinear optimization.}
  Math. Prog., 14, 149 - 160, 1978.

%\bibitem{Smithsolmon78} K. T. Smith, D. C. Solmon, S. L. Wagner, C. Hamaker. 
%  {\em Mathematical aspects of divergent beam radiography,} Proc. Natl. Acad. Sci. USA, 
%  75, No. 5, 2055 - 2058, 1978.

  \bibitem{SprungHohage17} B. Sprung and T. Hohage.
  {\em Higher order convergence rates for Bregman iterated 
  variational regularization of inverse problems.} 
   	arXiv:1710.09244, 2017.
   	
   	\bibitem{Takahashi13} W. Takahashi, N.C. Wong and J.C. Yao. 
   	{\em Fixed point theorems for nonlinear non-self mappings in Hilbert spaces 
   	and applications.} JFPTA, 2013:116, 2013.

  \bibitem{Tikhonov63} A. N. Tikhonov. {\em On the solution of ill-posed problems and the method of regularization,} 
   Dokl. Akad. Nauk SSSR, 151, 501-504, 1963.

  \bibitem{TikhonovArsenin77} A. N. Tikhonov, V. Y. Arsenin. 
  {\em Solutions of ill-posed problems.} Translated from the Russian. 
  Preface by translation editor Fritz John. Scripta Series in Mathematics. 
  V. H. Winston \& Sons, Washington, D.C.: John Wiley \& Sons, New York-Toronto, 
  Ont.-London, xiii+258 pp, 1977.
   
  
  \bibitem{Vogel02} C. R. Vogel. {\em Computational methods for inverse problems,}
  Frontiers Appl. Math. 23, 2002.

  \bibitem{VogelOman96} C. R. Vogel , M. E. Oman. 
  {\em Iterative methods for total variation
  denoising}, SIAM J. SCI. COMPUT., Vol. 17, No. 1, 227-238, 1996. 
  
  \bibitem{YinOsherGoldfarb08} W. Yin, S. Osher, D. Goldfarb 
  and J. Darbon. {\em Bregman Iterative Algorithms for $\ell_1-$
  Minimization with Applications to Compressed Sensing.} SIAM J. Imaging
  Sciences, Vol. 1, N0. 1, pp.143 - 168, 2008.
  

  \bibitem{Zeidler95} Zeidler E. 
  {\em Applied functional analysis application to mathematical
  physics.} Applied Mathematical Sciences, 108. Springer-Verlag, New York, 1995.
  
     
  \end{thebibliography}

\end{document}